\documentclass[11pt,reqno]{amsart}

\usepackage[foot]{amsaddr}
\usepackage{nicefrac, xfrac}
\usepackage{enumerate}
\usepackage{amsfonts, amsmath, amssymb, amscd, amsthm, bm}

\usepackage{enumerate}
\usepackage{url}
\usepackage[linktocpage=true,colorlinks,citecolor=magenta,linkcolor=blue,urlcolor=magenta]{hyperref}
\usepackage{multicol}
\usepackage{comment}
\usepackage{xcolor}

\newtheorem{thm}{Theorem}[section]

\theoremstyle{definition}
\newtheorem{defn}[thm]{Definition}

\numberwithin{equation}{section}
\newtheorem{theorem}{theorem}
\newtheorem{lemma}{lemma}

\numberwithin{equation}{section}

\newcounter{rom}
\renewcommand{\therom}{(\roman{rom})}
	{\end{list}}

\usepackage{amsmath}
\usepackage{amssymb}
\usepackage{cite}

\begin{document}
	\title{Comparison Theorems for the Profile Curve Equation of Rotationally Symmetric Self-Shrinkers}
	\author{Peng Peng}
	\address{School of Mathematical Sciences, Fudan University,
		Shanghai 200433, P.R. China} 
	\email{20110180011@fudan.edu.cn}
	\begin{abstract}
	Mean curvature flow is a fundamental geometric evolution equation in which a submanifold moves in the normal direction with velocity equal to its mean curvature vector. Self-shrinkers arise naturally as self-similar solutions to the mean curvature flow and play an important role as models for finite-time singularities. Among nontrivial examples of compact embedded self-shrinkers, the rotationally symmetric self-shrinking torus constructed by Angenent is one of the most important. However, the uniqueness of the Angenent torus remains a major open problem.
	
	In this paper, we study rotationally symmetric self-shrinkers of type $\mathbb{S}^{1}\times \mathbb{S}^{n-1}$ from the point of view of ordinary differential equations. We analyze the profile curves of rotationally symmetric self-shrinkers, focusing on the behavior of their vertical points and the curves traced out by these points as the initial height varies. We give a new proof of the existence of the Angenent torus by showing that two families of vertical-point trajectories must intersect. 
	
	We further derive the linearized equation associated with the rotationally symmetric self-shrinker equation and apply a Sturm-type comparison theorem to obtain sufficient conditions for the monotonicity of horizontal-point trajectories. In particular, we prove a comparison theorem for solutions near the spherical self-shrinker $x^{2}+r^{2}=2n$, and establish partial monotonicity results for the curves of horizontal points. These results provide a possible approach to the uniqueness problem for the Angenent torus.
	\end{abstract}
	\maketitle
\pagestyle{plain}

\section{Introduction}

Mean curvature flow is one of the most important geometric evolution equations. Let
\[
X_t:M_t\to\mathbb{R}^{n+1}
\]
be a smooth one-parameter family of immersed hypersurfaces, and let $\vec H_t$ denote the mean curvature vector of $X_t(M_t)$. We say that $M_t$ evolves by mean curvature flow if
\begin{equation}\label{eq:mcf-intro}
	\frac{d}{dt}X_t=-\vec H_t.
\end{equation}
Thus, under the mean curvature flow, each point of the hypersurface moves in the normal direction with speed equal to the mean curvature. This flow can be viewed as the negative gradient flow of the area functional; see, for example, \cite{2015Mean,1986heat}.

Self-shrinkers are central objects in the study of mean curvature flow. They are self-similar solutions and arise naturally as singularity models. Suppose that $M_t\subset\mathbb{R}^{n+1}$ is a mean curvature flow defined for $t\in(-\infty,0)$ and has the form
\[
M_t=\sqrt{-t}\,M_{-1}.
\]
Then $M_t$ is a self-similar shrinking solution. In this case, the hypersurface satisfies the self-shrinker equation
\[
H=-\frac{\langle x,\nu_t\rangle}{2t},
\]
where $H$ is the scalar mean curvature and $\nu_t$ is a choice of unit normal. Equivalently, after taking $t=-1$, a hypersurface $\Sigma\subset\mathbb{R}^{n+1}$ is a self-shrinker if it satisfies
\begin{equation}\label{eq:self-shrinker-intro}
	H=\frac{\langle x,\nu\rangle}{2},
\end{equation}
up to the convention for the choice of normal.

The simplest and most important self-shrinkers in $\mathbb{R}^{n+1}$ include the hyperplane $\mathbb{R}^{n}$, the round sphere $\mathbb{S}^{n}$ of radius $\sqrt{2n}$ centered at the origin, and the generalized cylinders
\[
\mathbb{S}^{k}(\sqrt{2k})\times\mathbb{R}^{n-k}.
\]

Despite these simple examples, the general structure of self-shrinkers is highly nontrivial. In dimension one, self-similar shrinking curves in the plane have been classified through the work of Abresch and Langer and related developments \cite{Abresch1986TheNC}\cite{1987EW}\cite{2010HH}. Taking products of such curves with Euclidean factors gives higher-dimensional examples. In higher dimensions, however, the classification of self-shrinkers is far more complicated.

A fundamental breakthrough was made by Angenent \cite{Angenent1992}, who constructed an embedded rotationally symmetric self-shrinking torus. The profile curve of this torus is symmetric with respect to a line perpendicular to the axis of rotation. In this paper, we refer to this rotationally symmetric self-shrinking torus with the additional reflection symmetry as the \emph{Angenent torus}. The existence of the Angenent torus shows that singularities of mean curvature flow in higher dimensions can be much richer than those in the curve shortening flow. In particular, by using the Angenent torus and the maximum principle, one can show that for $n\geq2$, embedded smooth hypersurfaces may develop singularities that split the evolving hypersurface into disconnected components.

Since Angenent's construction, many further examples of nontrivial self-shrinkers have been found. Nguyen \cite{2017Shrinking} gave a variational construction of the Angenent torus. McGrath \cite{McGrath2015ClosedMC} constructed self-shrinkers in $\mathbb{R}^{2n}$ which are diffeomorphic to
\[
\mathbb{S}^{n-1}\times \mathbb{S}^{n-1}\times \mathbb{S}^{1}
\]
and invariant under the action of $SO(n)\times SO(n)$. Nguyen \cite{Nguyen2006ConstructionOC}\cite{Nguyen2009Construction}\cite{Nguyen2014Construction}, M{\o}ller \cite{Moller2011ClosedSS}, and Kapouleas, Kleene and M{\o}ller \cite{Kapouleas2011MeanCShighgenus} constructed many further examples of nontrivial embedded self-shrinkers. More recently, Riedler \cite{RiedleroskarO2023embedd} constructed embedded self-shrinkers of the form $\mathbb{S}^{1}\times M$, where $M$ is an isoparametric hypersurface in $\mathbb{S}^{n}$ with equal principal curvature multiplicities.

Rotationally symmetric self-shrinkers have been studied extensively. Drugan \cite{DG2013ImmersedS2} constructed an immersed self-shrinking sphere in $\mathbb{R}^{3}$. Kleene and M{\o}ller \cite{Kleene2010SelfshrinkersWA} proved that for every regular cone, there exists a unique self-shrinker asymptotic to that cone, giving rise to self-shrinkers with conical shrinking trumpet ends. Drugan and Kleene \cite{Drugan2013ImmersedS} later proved the existence of infinitely many immersed rotationally symmetric self-shrinkers of various topological types, including $\mathbb{S}^{n}$, $\mathbb{R}^{n}$, $\mathbb{R}\times\mathbb{S}^{n-1}$, and $\mathbb{S}^{1}\times\mathbb{S}^{n-1}$.

In \cite{Kleene2010SelfshrinkersWA}, Kleene and M{\o}ller also obtained a classification of complete embedded rotationally symmetric self-shrinkers. Such a hypersurface must be one of the following:
\begin{enumerate}[(1)]
	\item a hyperplane $\mathbb{R}^{n}$;
	\item the cylinder $\mathbb{R}\times\mathbb{S}^{n-1}$ of radius $\sqrt{2(n-1)}$;
	\item the round sphere $\mathbb{S}^{n}$ of radius $\sqrt{2n}$;
	\item a torus diffeomorphic to $\mathbb{S}^{1}\times\mathbb{S}^{n-1}$.
\end{enumerate}
The fourth case leaves open an important question: whether the embedded rotationally symmetric self-shrinking torus is unique. This is one of the central open problems in the study of rotationally symmetric self-shrinkers; see also \cite{2017survey}. A recent result of Mramor \cite{2020Compactness} shows, using the analytic theory of ordinary differential equations, that among tori symmetric with respect to a hyperplane perpendicular to the axis of rotation, there are only finitely many Angenent-type tori. A related finiteness result was also obtained in \cite{Ma2022EntropyBC}. 

There are also important uniqueness and rigidity results beyond the rotationally symmetric setting. Brendle \cite{2014brendle} proved that any compact embedded genus-zero self-shrinker in $\mathbb{R}^{3}$ must be the round sphere. Mramor and Wang \cite{2020MW}, inspired by Lawson's work on embedded minimal surfaces in $\mathbb{S}^{3}$ \cite{1970lawson}, proved that embedded self-shrinking tori in $\mathbb{R}^{3}$ are unknotted. In another direction, Wang \cite{Wang2011} proved that two-dimensional translating solitons in $\mathbb{R}^{3}$ must be rotationally symmetric under suitable hypotheses, and constructed non-rotationally symmetric examples in higher dimensions. More recently, Chu and Sun \cite{2025sun} proved the existence of a non-rotationally symmetric embedded self-shrinker of genus one in $\mathbb{R}^{3}$, although it is not yet known whether this surface is compact.

The goal of this paper is to study the Angenent torus and related rotationally symmetric self-shrinkers from the viewpoint of the ordinary differential equations satisfied by their profile curves. A rotationally symmetric hypersurface in $\mathbb{R}^{n+1}$ can be generated by rotating a curve in the $(x,r)$-plane about the $x$-axis. Then the arc-length profile curve  satisfies the following system of ordinary differential equations:
\begin{equation}\label{eq:arclength-system}
	\begin{cases}
		\dot{x}=\cos\theta, \\
		\dot{r}=\sin\theta, \\
		\dot\theta=\left(\frac{n-1}{r}-\frac{r}{2}\right)\cos\theta+\frac{x}{2}\sin\theta.
	\end{cases}
\end{equation}
Here $\theta$ denotes the angle between the tangent direction of the solution curve and the positive direction of the $x$-axis. If $r$ is regarded as a function of $x$, namely $r=u(x)$, then $u$ satisfies
\begin{equation}\label{eq:u-equation}
	\frac{d\theta}{dx}
	=
	\frac{u''}{1+(u')^{2}}
	=
	\frac{n-1}{u}-\frac{u}{2}+\frac{x}{2}u'.
\end{equation}
If $x$ is regarded as a function of $r$, namely $x=g(r)$, then $g$ satisfies
\begin{equation}\label{eq:g-equation}
	\frac{d\varphi}{dr}
	=
	\frac{g''}{1+(g')^{2}}
	=
	\left(\frac{r}{2}-\frac{n-1}{r}\right)g'-\frac{g}{2}.
\end{equation}
Here $\varphi$ denotes the angle between the solution curve and the positive direction of the $r$-axis.

In this thesis, we will frequently use the following initial value problem:
\begin{equation}\label{ivp:uR}
	\begin{cases}
		\displaystyle
		\frac{d\theta}{dx}
		=
		\frac{u''}{1+(u')^{2}}
		=
		\frac{n-1}{u}-\frac{u}{2}+\frac{x}{2}u', \\[0.3em]
		u(0)=R,\qquad u'(0)=0.
	\end{cases}
\end{equation}
The corresponding solution will be denoted by $u_R$. For convenience, we denote by $\Gamma_R$ the solution of \eqref{eq:arclength-system} with the following initial conditions:
\begin{equation}\label{ivp:GammaR}
	\begin{cases}
		x(0)=0,\\
		r(0)=R,\\
		\theta(0)=0.
	\end{cases}
\end{equation}
If the image of $\Gamma_R$ is viewed as the graph of a function of $r$, then we denote this function by $g_R$.

\begin{defn}
	For the graph $r=u(x)$, a point is called a \emph{horizontal point of $u$} if $u'=0$ there, and a \emph{vertical point of $u$} if $|u'|=\infty$ there. Similarly, for the graph $x=g(r)$, a point is called a \emph{horizontal point of $g$} if $g'=0$ there, and a \emph{vertical point of $g$} if $|g'|=\infty$ there.
\end{defn}

When a solution $g$ or $u$ is mentioned in this thesis, it usually refers to the portion of the solution satisfying $g\geq0$ or $u\geq0$. This thesis begins with the study of the vertical points of the solutions $u_R$ to the initial value problem \eqref{ivp:uR}. These vertical points depend on $R$ and will usually be denoted by $(b_R,u_R(b_R))$, that is, $u_R'(b_R)=\pm\infty$.

As $R$ varies from $+\infty$ to $\sqrt{2n}$, the vertical points form a curve. As $R$ varies from $0$ to $\sqrt{2(n-1)}$, the vertical points form another curve. If these two curves intersect in a suitable way, then one may obtain a solution of \eqref{eq:arclength-system} which is symmetric with respect to the $r$-axis and diffeomorphic to $\mathbb S^1$. Rotating this curve about the $x$-axis gives an Angenent torus. Therefore, studying the properties of the curves formed by the vertical points of solutions is a natural approach to the uniqueness problem.

First, we establish several qualitative properties of solutions to the rotationally symmetric self-shrinker equation. We prove concavity and convexity propagation results, continuity of vertical points under variation of the initial height, and asymptotic estimates for the vertical points as $R\to+\infty$ and $R\to0$. These results allow us to construct the two vertical-point trajectories and prove that they intersect. This gives a new proof of the existence of the Angenent torus.

Second, we derive and study a linearized equation associated with the angle function of solutions to \eqref{eq:g-equation}. Let
\[
\varphi=\arctan g'.
\]
By moving the initial point and differentiating the corresponding angle equation, we obtain a second-order linear equation for the linearized function $h$. We then apply a Sturm-type comparison theorem to obtain sufficient conditions ensuring $h>0$. This positivity implies a comparison principle for nearby profile curves.

Third, we apply the comparison theorem to the study of monotonicity of the horizontal-point trajectories. In particular, we introduce an auxiliary quantity $\eta$ whose nonnegativity implies positivity of the linearized function. We verify this condition near the spherical self-shrinker $x^{2}+r^{2}=2n$, obtaining a comparison theorem for solutions near the sphere. We also prove partial monotonicity results for the horizontal-point curves $L_1$ and $L_2$. These results do not settle the uniqueness problem for the Angenent torus, but they provide a possible route toward it by reducing certain local monotonicity questions to explicit inequalities involving the profile-curve ODE.

\section{Analysis of Convex and Concave Solutions}

\subsection{Continuity of the Vertical Points of Solutions}

\begin{lemma}\label{lem:maximum-principle}(\cite{Kleene2010SelfshrinkersWA})
	Let $r=u(x)$ and $x=g(r)$ be solutions of \eqref{eq:u-equation} and \eqref{eq:g-equation}, respectively.
	
	\begin{enumerate}[(i)]
		\item If there exists $x_0$ such that $u'(x_0)=0$, then $u$ has a local maximum at $x_0$ when $u(x_0)>\sqrt{2(n-1)}$, and a local minimum at $x_0$ when $u(x_0)<\sqrt{2(n-1)}$. If $u(x_0)=\sqrt{2(n-1)}$, then $u$ is the constant cylindrical solution.
		
		\item If there exists $r_0$ such that $g'(r_0)=0$ and $g(r_0)>0$, then $g$ has a local maximum at $r_0$.
	\end{enumerate}
\end{lemma}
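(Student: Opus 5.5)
Both parts follow from reading off the second derivative at a critical point from the ODE, with the uniqueness theorem for \eqref{eq:u-equation} handling the degenerate case.

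For (i), suppose $u'(x_0)=0$. Then \eqref{eq:u-equation} evaluated at $x_0$ gives
\[
u''(x_0)=\bigl(1+u'(x_0)^2\bigr)\Bigl(\frac{n-1}{u(x_0)}-\frac{u(x_0)}{2}\Bigr)=\frac{2(n-1)-u(x_0)^2}{2u(x_0)}.
\]
Since $u>0$ on the portion of the solution we consider, the sign of $u''(x_0)$ is the sign of $2(n-1)-u(x_0)^2$.
\begin{enumerate}[(a)]
	\item If $u(x_0)>\sqrt{2(n-1)}$, then $u''(x_0)<0$. By the second-derivative test, $x_0$ is a strict local maximum.
	\item If $u(x_0)<\sqrt{2(n-1)}$, then $u''(x_0)>0$, so $x_0$ is a strict local minimum.
	\item If $u(x_0)=\sqrt{2(n-1)}$, the constant function $u\equiv\sqrt{2(n-1)}$ solves \eqref{eq:u-equation} with the same data $(u(x_0),u'(x_0))$. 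The right-hand side is smooth, hence locally Lipschitz, in $(x,u,u')$ wherever $u>0$. By Picard--Lindel\"of uniqueness, $u$ coincides with the cylindrical solution near $x_0$, and by continuation on its whole domain.
\end{enumerate}

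For (ii), suppose $g'(r_0)=0$ with $g(r_0)>0$. Then \eqref{eq:g-equation} gives
\[
g''(r_0)=\bigl(1+g'(r_0)^2\bigr)\Bigl(\Bigl(\frac{r_0}{2}-\frac{n-1}{r_0}\Bigr)g'(r_0)-\frac{g(r_0)}{2}\Bigr)=-\frac{g(r_0)}{2}<0.
\]
Hence $r_0$ is a strict local maximum of $g$. Here $r_0>0$ is implicit, since \eqref{eq:g-equation} is singular at $r=0$.

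There is no real obstacle. The only points needing care are the positivity $u>0$ (respectively $r_0>0$), which is needed so that both ODEs are regular at the point in question, and the appeal to uniqueness in the borderline case of (i), where the second-derivative test gives no information.
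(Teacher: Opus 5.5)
Your proof is correct and is essentially the paper's argument: evaluate the ODE at the critical point to read off the sign of the second derivative ($\frac{n-1}{u(x_0)}-\frac{u(x_0)}{2}$, resp.\ $-\frac{g(r_0)}{2}$), and invoke ODE uniqueness in the borderline cylindrical case. One small refinement: in (ii) you only need $r_0\neq 0$, not $r_0>0$, because $g''(r_0)=-g(r_0)/2$ holds whatever the sign of $r_0$; this matters because the paper later works with the formal extension to $r<0$.
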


\begin{proof}
	Taking $u'(x_0)=0$ in \eqref{eq:u-equation}, we obtain $u''(x_0)=\frac{n-1}{u(x_0)}-\frac{u(x_0)}{2}$. Thus $u''(x_0)<0$ if $u(x_0)>\sqrt{2(n-1)}$, and $u''(x_0)>0$ if $u(x_0)<\sqrt{2(n-1)}$. If $u(x_0)=\sqrt{2(n-1)}$, then the uniqueness theorem for ordinary differential equations implies that $u\equiv\sqrt{2(n-1)}$.
	
	Taking $g'(r_0)=0$ in \eqref{eq:g-equation}, we obtain $g''(r_0)=-\frac{g(r_0)}{2}<0$. Thus $g$ has a local maximum at $r_0$.
\end{proof}

The following lemma is equivalent to Lemma 1 in \cite{Drugan2013ImmersedS}. The formulation and proof below will be convenient for our purposes.

\begin{lemma}\label{lem:concavity-propagation}
	\begin{enumerate}[(i)]
		\item Let $u$ be a nonnegative solution of \eqref{eq:u-equation}. Suppose that $u'<0$. If there exists $x_1$ such that $u''(x_1)<0$, then $u''(x)<0$ for all $x<x_1$ for which $u$ is defined and the assumptions hold. Similarly, if $u'>0$ and there exists $x_1$ such that $u''(x_1)>0$, then $u''(x)>0$ for all $x<x_1$ for which $u$ is defined and the assumptions hold.
		
		\item Let $g$ be a nonnegative solution of \eqref{eq:g-equation}. Suppose that $g'>0$. If there exists $r_1$ such that $g''(r_1)<0$, then $g''(r)<0$ for all $r<r_1$ for which $g$ is defined and the assumptions hold. Similarly, if $g'<0$ and there exists $r_1$ such that $g''(r_1)<0$, then $g''(r)<0$ for all $r>r_1$ for which $g$ is defined and the assumptions hold.
	\end{enumerate}
\end{lemma}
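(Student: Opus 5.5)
The plan is to work with the curvature quantity rather than with $u''$ or $g''$ directly, and to show that it is strictly monotone at each of its zeros. Since the denominators $1+(u')^{2}$ and $1+(g')^{2}$ are positive, the sign of $u''$ agrees with the sign of
\[
w:=\frac{d\theta}{dx}=\frac{n-1}{u}-\frac{u}{2}+\frac{x}{2}u',
\]
and the sign of $g''$ agrees with the sign of
\[
\psi:=\frac{d\varphi}{dr}=\Bigl(\frac{r}{2}-\frac{n-1}{r}\Bigr)g'-\frac{g}{2}.
\]
Both $w$ and $\psi$ are smooth wherever the solution is defined with $u>0$, respectively $r>0$.

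For (i), differentiate $w$ using the right-hand side of \eqref{eq:u-equation}. The terms $\pm u'/2$ cancel, which gives
\[
w'=-\frac{(n-1)u'}{u^{2}}+\frac{x}{2}u''.
\]
At any point $x_0$ where $w(x_0)=0$ we also have $u''(x_0)=0$. Hence $w'(x_0)=-(n-1)u'(x_0)/u(x_0)^{2}$.

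Suppose first that $u'<0$ and $w(x_1)<0$. If $u''$ failed to be negative somewhere to the left of $x_1$, let $x_0<x_1$ be the largest zero of $w$ below $x_1$. Then $w<0$ on $(x_0,x_1]$, which forces $w'(x_0)\le 0$. But $w'(x_0)=-(n-1)u'(x_0)/u(x_0)^2>0$, a contradiction.

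If instead $u'>0$ and $w(x_1)>0$, the same argument applies with signs reversed. The largest zero $x_0<x_1$ would have $w>0$ just to its right, so $w'(x_0)\ge0$, whereas $w'(x_0)<0$.

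For (ii), differentiate $\psi$ using \eqref{eq:g-equation}. Here the terms $\pm g'/2$ cancel, which gives
\[
\psi'=\frac{(n-1)g'}{r^{2}}+\Bigl(\frac{r}{2}-\frac{n-1}{r}\Bigr)g''.
\]
At a zero $r_0$ of $\psi$ we have $g''(r_0)=0$, so $\psi'(r_0)=(n-1)g'(r_0)/r_0^{2}$.

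Suppose $g'>0$ and $\psi(r_1)<0$. Take the largest zero $r_0<r_1$. Then $\psi<0$ on $(r_0,r_1]$, so $\psi'(r_0)\le0$, contradicting $\psi'(r_0)>0$.

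Suppose $g'<0$ and $\psi(r_1)<0$. Now propagate to the right. Take the smallest zero $r_0>r_1$. Then $\psi<0$ on $[r_1,r_0)$, so $\psi'(r_0)\ge 0$, contradicting $\psi'(r_0)<0$.

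The only delicate point is bookkeeping: the direction of propagation must be matched with the sign of the derivative at the zero. One must also note that the extremal zero $x_0$ (or $r_0$) exists by continuity, and that the argument stays inside the interval where $u>0$ (respectively $r>0$) and the sign hypothesis on $u'$ or $g'$ holds. That is exactly the range asserted in the lemma.
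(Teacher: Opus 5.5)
Your proof is correct and is essentially the paper's argument. The paper evaluates $u'''$ and $g'''$ at the extremal zero of $u''$ or $g''$, and there $u'''=(1+(u')^2)w'$ and $g'''=(1+(g')^2)\psi'$, so working with $w=d\theta/dx$ and $\psi=d\varphi/dr$ only removes the product-rule term that vanishes at that point anyway. One small remark: (ii) does not need $r>0$, since $\psi'(r_0)=(n-1)g'(r_0)/r_0^2$ has the sign of $g'$ for every $r_0\neq 0$, and this matters because the paper also applies the lemma on the reflected branch $r<0$.
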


\begin{proof}
	We prove the first assertion in (i). Suppose, by contradiction, that the assertion is false. Then there exists $x_2<x_1$ such that $u''(x_2)=0$, and $u''<0$ on $(x_2,x_1)$. Differentiating \eqref{eq:u-equation}, we obtain
	\begin{equation*}
		u'''
		=
		2u'u''
		\left(
		\frac{n-1}{u}-\frac{u}{2}+\frac{x}{2}u'
		\right)
		+
		(1+(u')^{2})
		\left(
		-\frac{n-1}{u^{2}}u'
		+\frac{x}{2}u''
		\right).
	\end{equation*}
	Evaluating this identity at $x_2$, we get
	\[
	u'''(x_2)
	=
	-(1+(u')^{2})u'\frac{n-1}{u^{2}}(x_2)>0,
	\]
	because $u'<0$. This contradicts the fact that $u''<0$ immediately to the right of $x_2$. The case $u'>0$ and $u''(x_1)>0$ is proved similarly.
	
	For (ii), differentiating \eqref{eq:g-equation} gives
	\begin{equation}\label{eq:g-third-derivative}
		g'''
		=
		\left[
		\frac{n-1}{r^{2}}g'
		+
		\left(\frac{r}{2}-\frac{n-1}{r}\right)g''
		\right](1+(g')^{2})
		+
		2g'g''
		\left[
		\left(\frac{r}{2}-\frac{n-1}{r}\right)g'
		-\frac{g}{2}
		\right].
	\end{equation}
	If $g'>0$ and $g''(r_1)<0$, the same crossing argument as above shows that $g''$ cannot vanish at a point $r_0<r_1$ while changing from positive to negative. Hence $g''<0$ for all $r<r_1$ under the assumptions.
	
	If $g'<0$ and $g''(r_1)<0$, suppose that $g''$ vanishes for the first time at some $r_0>r_1$. Then $g''<0$ on $(r_1,r_0)$ and $g''(r_0)=0$. Evaluating \eqref{eq:g-third-derivative} at $r_0$, we obtain
	 $$g'''(r_0)=(1+(g')^2)\frac{n-1}{r_0^2}g'(r_0)<0,$$ 
	 because $g'<0$. This contradicts the possibility that $g''$ crosses from negative to nonnegative at $r_0$. Therefore $g''<0$ for all $r>r_1$ under the assumptions.
\end{proof}

The following lemma establishes the continuity of vertical points. A further discussion of part (i) requires understanding the behavior of the solutions as $R\to+ \infty$, and also the behavior of solutions near the sphere $u=\sqrt{2n-x^2}$. These will be addressed in Lemma~\ref{lem:large-R-vertical-point} and Section~4.3. For part (ii), Lemma~\ref{lem:small-R-vertical-point} will establish the existence of vertical points of $u_R$ for $R$ near $0$. Similar results can also be found in \cite{Drugan2013ImmersedS}.

\begin{lemma}\label{lem:vertical-point-continuity}
	\begin{enumerate}[(i)]
		\item Suppose that $R_1>R_2>\sqrt{2n}$. Let $u_{R_1}$ and $u_{R_2}$ be solutions of the initial value problem \eqref{ivp:uR}. Assume that there exist $b_{R_1}>0$ and $b_{R_2}>0$ such that $u'_{R_1}(b_{R_1})=u'_{R_2}(b_{R_2})=-\infty$. Then for every $R\in(R_2,R_1)$, there exists $b_R>0$ such that $u'_R(b_R)=-\infty$, and $u''_R(x)<0$ on the corresponding branch $0<x<b_R$. Moreover, the vertical point $(b_R,u_R(b_R))$ depends continuously on $R$, unless the vertical points reach the axis $r=0$ or escape to infinity.
		
		\item Suppose that $0<R_2<R_1<\sqrt{2(n-1)}$. Let $u_{R_1}$ and $u_{R_2}$ be solutions of the initial value problem \eqref{ivp:uR}. Assume that there exist $b_{R_1}>0$ and $b_{R_2}>0$ such that $u'_{R_1}(b_{R_1})=u'_{R_2}(b_{R_2})=+\infty$. Then for every $R\in(R_2,R_1)$, there exists $b_R>0$ such that $u'_R(b_R)=+\infty$, and $u''_R(x)>0$ on the corresponding branch $0<x<b_R$. Moreover, the vertical point $(b_R,u_R(b_R))$ depends continuously on $R$, unless the vertical points escape to infinity.
	\end{enumerate}
\end{lemma}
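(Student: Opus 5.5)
Both parts go the same way, so I will do (i) and only point out the changes needed for (ii). The plan is to treat the solution as the arc-length curve $\Gamma_R$ of \eqref{eq:arclength-system}, whose data depend smoothly on $R$. Along $\Gamma_R$, the first vertical point is the first parameter value $s_R$ at which $\theta(s)=-\pi/2$.

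First I show that on the branch $0<x<b_R$ we have $u_R'<0$ and $u_R''<0$. At $x=0$, $u_R(0)=R>\sqrt{2n}>\sqrt{2(n-1)}$, so Lemma~\ref{lem:maximum-principle}(i) gives $u_R''(0)<0$ and hence $u_R'<0$ for small $x>0$. Lemma~\ref{lem:concavity-propagation}(i) propagates concavity only toward smaller $x$, so I will argue directly instead. Suppose $u_R''(x_2)=0$ at a first point $x_2>0$, with $u_R'<0$ on $(0,x_2]$. The third-derivative formula in the proof of Lemma~\ref{lem:concavity-propagation} gives $u_R'''(x_2)=-(1+u'^2)\frac{n-1}{u^2}u'>0$. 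This only says that $u''$ crosses zero upward, so it does not yet give a contradiction.

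To get one, I will use the equation $u''=(1+u'^2)\bigl(\frac{n-1}{u}-\frac u2+\frac x2u'\bigr)$. For $R>\sqrt{2n}$, comparison with the sphere $x^2+u^2=2n$ keeps the quantity $\frac{n-1}{u}-\frac u2+\frac x2 u'$ negative. Concretely, I set $\psi=u^2+x^2$ and show $\psi>2n$ persists along the concave branch. If $\psi$ drops to $2n$, then at that point $uu'+x\ge 0$ fails to conflict unless $u'\ge -x/u$. Then $\frac{n-1}{u}-\frac u2+\frac x2u'\le \frac{n-1}{u}-\frac{u}{2}-\frac{x^2}{2u}=\frac{2(n-1)-2n}{2u}<0$. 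So as long as $u'\ge -x/u$ the bracket is negative, and where $u'<-x/u$ the curve is steeper than the sphere's tangent, which is the concave direction. Thus $u''<0$ persists until $u'\to-\infty$ or $u\to0$. I expect this sign argument to be the delicate step, and I would rely on the fact that the vertical point exists for $R_1$ and $R_2$ to exclude pathological behaviour.

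Next comes existence for $R\in(R_2,R_1)$. The main tool is ordering. By the comparison above, I claim that solutions with $R_2<R<R_1$ stay between $u_{R_2}$ and $u_{R_1}$ near $x=0$. More robustly, I use a continuity/open-closed argument on the set $S=\{R\in[R_2,R_1]:\Gamma_R \text{ reaches } \theta=-\pi/2 \text{ with } r>0\}$.

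Openness of $S$ follows from transversality. At a vertical point of $\Gamma_R$, $\dot\theta=\frac x2\sin\theta=-\frac{b_R}{2}<0$, so $\theta$ crosses $-\pi/2$ transversally. The implicit function theorem then gives that $s_R$ and $(x(s_R),r(s_R))$ depend smoothly on $R$ near any $R\in S$; this already proves the continuity claim.

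For closedness and connectedness, take $R_k\to R_*$ with $R_k\in S$. Along each branch $\theta$ decreases monotonically from $0$ toward $-\pi/2$ (this is the concavity $u''<0$). Moreover $\dot x=\cos\theta\ge0$ and $r$ decreases, and the curves depend continuously on $R$. So the limit branch reaches $\theta=-\pi/2$ unless the arc length $s_{R_k}\to\infty$ or $r\to0$, and these are exactly the excluded degenerations ("vertical points reach the axis or escape to infinity"). Away from them, $[R_2,R_1]\cap S$ is open and closed and contains both endpoints, so it contains the interval.

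For (ii) the argument is identical with signs reversed. Here $R<\sqrt{2(n-1)}$ gives $u''(0)>0$, the branch is convex and increasing, and at a vertical point $\theta=\pi/2$ with $\dot\theta=\frac x2>0$. Since $r$ increases, the only possible degeneration is escape to infinity.
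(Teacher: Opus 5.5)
\paragraph{Overall.} Your openness, continuity and closedness steps are sound, and they are the paper's argument in arc-length form. The transversality $\dot\theta=-b_R/2<0$ at $\theta=-\pi/2$ is the same fact as the nondegeneracy $g_R''=-g_R/2<0$ of the horizontal point of $g_R$ that the paper uses. Your limiting argument, modulo the excluded degenerations, is also the same as the paper's.

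\paragraph{The gap.} The genuine gap is the concavity claim $u_R''<0$ on $(0,b_R)$. Your closedness step depends on it too, through the monotonicity of $\theta$ and hence bounded arc length. Your sphere comparison does not establish this claim, for two reasons.
\begin{enumerate}
\item The displayed inequality is reversed. If $u'\ge -x/u$ and $x>0$, then $\frac x2u'\ge-\frac{x^2}{2u}$, so you only get the lower bound $\frac{n-1}{u}-\frac u2+\frac x2u'\ge\frac{2(n-1)-\psi}{2u}$. The upper bound holds only where $u'\le -x/u$, and it gives negativity only when in addition $\psi>2(n-1)$. Where $u'>-x/u$ you obtain nothing.
\item Persistence of $\psi>2n$ cannot come from a first-contact argument. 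The sphere is just another solution of the same second-order equation. Uniqueness excludes only tangential contact with it, not a transversal crossing.
\end{enumerate}
As you noticed yourself, $u'''>0$ at a zero of $u''$ with $u'<0$. So concavity really can be lost going forward, and a forward argument starting from $x=0$ would need global information that your comparison does not supply.

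\paragraph{The missing idea.} You discarded Lemma~\ref{lem:concavity-propagation}(i) because it propagates toward smaller $x$. It is exactly the right tool once you seed it at the vertical point instead of at $x=0$. For any $R$ whose branch reaches a vertical point $b_R>0$,
$u_R''=(1+(u_R')^2)\bigl(\frac{n-1}{u_R}-\frac{u_R}{2}+\frac x2u_R'\bigr)\to-\infty$ as $x\to b_R^-$, because $x\to b_R>0$ and $u_R'\to-\infty$. This is equivalent to your own computation $\dot\theta=-b_R/2<0$, and to the paper's relation $u_R''=-g_R''/(g_R')^3$ with $g_R'<0$ and $g_R''<0$. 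The lemma then carries $u_R''<0$ backward over the whole branch on which $u_R'<0$, which is how the paper argues. Applying this to each $R_k\in S$ gives the monotonicity of $\theta$ that your closedness step needs.

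\paragraph{Part (ii).} The same correction applies. Convexity does not propagate forward from $u''(0)>0$, since at a zero of $u''$ with $u'>0$ one has $u'''<0$. Instead, $u_R''\to+\infty$ as $x\to b_R^-$, and the convex half of Lemma~\ref{lem:concavity-propagation}(i) propagates this backward.
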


\begin{proof}
	We prove (i). The proof of (ii) is analogous.
	
	First, we show that the assertion holds for all $R$ sufficiently close to $R_1$ from below. Since $(b_{R_1},u_{R_1}(b_{R_1}))$ is a vertical point of $u_{R_1}$, it is a horizontal point of the corresponding function $g_{R_1}$. By Lemma~\ref{lem:maximum-principle}, this point is a local maximum point of $g_{R_1}$. In particular, $g_{R_1}'=0$ and $g_{R_1}''<0$ at this point.
	
	By the continuous dependence of solutions of ordinary differential equations on initial data, for every $R$ sufficiently close to $R_1$, the function $g_R$ has a nearby critical point. Since this critical point is nondegenerate, it depends continuously on $R$. Moreover, at this critical point one has $g_R''<0$, and hence it is a local maximum point of $g_R$. Equivalently, $u_R$ has a vertical point $b_R$ near $b_{R_1}$.
	
	We now determine the sign of $u_R''$ on the branch before this vertical point. Near the vertical point, but away from the point itself, the two graph representations are related by $g_R'=\frac{1}{u_R'}$ and
	\[
	g_R''
	=
	-\frac{u_R''}{(u_R')^3}.
	\]
	Equivalently,
	\[
	u_R''
	=
	-\frac{g_R''}{(g_R')^3}.
	\]
	In the present case $u_R'\to-\infty$ as $x\to b_R$, and therefore $g_R'<0$ near the corresponding horizontal point of $g_R$. Since $g_R''<0$, it follows that $u_R''<0$ near $b_R$ on the branch $0<x<b_R$. By Lemma~\ref{lem:concavity-propagation}, this negativity propagates backward along the branch. Hence $u_R''(x)<0$ for all $0<x<b_R$. Therefore the assertion holds on some interval $(R_1-\delta,R_1)$.
	
	We next continue this conclusion to the whole interval $(R_2,R_1)$. Let
	\[
	\mathcal I
	=
	\left\{
	\widetilde R\in(R_2,R_1):
	\text{ the assertion holds for all } R\in(\widetilde R,R_1)
	\right\}.
	\]
	The previous paragraph shows that $\mathcal I$ is nonempty. Let $R_*=\inf \mathcal I$. We claim that $R_*=R_2$. Suppose, by contradiction, that $R_*>R_2$. By the continuous dependence of solutions on initial data, the vertical points corresponding to $R\downarrow R_*$ have a limit, unless they reach the axis $r=0$ or escape to infinity. Excluding these exceptional cases, the limiting point is a horizontal point of $g_{R_*}$, and hence a vertical point of $u_{R_*}$. As above, this point is nondegenerate as a critical point of $g_{R_*}$, and therefore it persists for all $R$ sufficiently close to $R_*$. Applying the previous local argument around $R_*$, we extend the assertion to an interval $(R_*-\varepsilon,R_1)$, contradicting the definition of $R_*$. Thus $R_*=R_2$, and the assertion holds for all $R\in(R_2,R_1)$.
	
	The continuity of the vertical point $(b_R,u_R(b_R))$ follows from the same nondegeneracy and the continuous dependence of solutions on initial data. This proves (i).
	
	For (ii), the argument is the same except that $u_R'\to+\infty$ at the vertical point. Hence $g_R'>0$ near the corresponding horizontal point of $g_R$. Since $g_R''<0$, the relation 
	$$u_R''=-\frac{g_R''}{(g_R')^3},$$ 
	gives $u_R''>0$ near $b_R$. Lemma~\ref{lem:concavity-propagation} then implies $u_R''(x)>0$ on the whole branch $0<x<b_R$. The remaining continuity argument is identical.
\end{proof}

\begin{lemma}\label{lem:vertical-point-curve}
	If the vertical points of the solutions $u_R$ in Lemma~\ref{lem:vertical-point-continuity} remain in a bounded region with $r>0$, then these vertical points form a continuous curve in that region.
\end{lemma}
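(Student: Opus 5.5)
The plan is to view the vertical points as the image of the map $R\mapsto P(R):=(b_R,u_R(b_R))$, defined on the parameter interval $(R_2,R_1)$ from Lemma~\ref{lem:vertical-point-continuity}(i), or on $(R_2,R_1)\subset(0,\sqrt{2(n-1)})$ in case (ii). I will show that this map is continuous on the whole interval. Its image is then the continuous image of an interval, which is a continuous curve.

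\textbf{Step 1: existence.} Lemma~\ref{lem:vertical-point-continuity} already gives, for every $R$ in the interval, a vertical point $b_R$ with $u_R''$ of one sign on $0<x<b_R$. Because $u_R$ is concave (respectively convex) on that branch, $b_R$ is the \emph{first} vertical point of $u_R$. So $P(R)$ is uniquely defined, and there is no ambiguity in which critical point of $g_R$ we follow.

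\textbf{Step 2: continuity at a fixed $R_0$.} Lemma~\ref{lem:vertical-point-continuity} says continuity can fail only if vertical points reach $r=0$ or escape to infinity. The hypothesis of the present lemma excludes both: all $P(R)$ lie in a bounded region $K$ with $r\ge c>0$ on $K$ (if "region with $r>0$" only gives $r>0$ pointwise, I first take $K$ compact inside $\{r>0\}$). I will argue by sequences. Let $R_k\to R_0$. The points $P(R_k)$ are bounded, so a subsequence converges to some $P_*=(b_*,r_*)$ with $r_*>0$. On compact subsets of $\{r>0\}$ the system \eqref{eq:arclength-system} is smooth, so $\Gamma_{R_k}\to\Gamma_{R_0}$ uniformly on compact arclength intervals. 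Arclength up to the first vertical point is bounded, since it is a graph over $[0,b_R]$ with bounded slope integral; alternatively one can use the bounded curvature of $\theta$ on $K$. Passing to the limit, $\theta=-\pi/2$ at $P_*$ along $\Gamma_{R_0}$, so $P_*$ is a vertical point of $u_{R_0}$.

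Next I need $P_*=P(R_0)$, the first vertical point. By Lemma~\ref{lem:maximum-principle}(ii), every vertical point is a nondegenerate maximum of $g$, so the vertical points of $\Gamma_{R_0}$ are isolated, and by the implicit function theorem each persists under perturbation. If $P_*$ were a later vertical point, then $P(R_0)$ would persist as a vertical point of $\Gamma_{R_k}$ occurring before $P(R_k)$, contradicting Step 1. Hence every subsequential limit equals $P(R_0)$, and $P$ is continuous at $R_0$.

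\textbf{Step 3: conclusion.} $P$ is continuous on the interval, and the image of an interval under a continuous map is a connected continuous curve in the region. Extending to the endpoints $R_1,R_2$ is immediate by the same argument.

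\textbf{Main obstacle.} The delicate point is the uniform bound on the arclength (equivalently, ensuring the orbits stay in the region where the ODE is smooth) up to the vertical point, which is what allows passing to the limit. I would handle it by using concavity: on $0<x<b_R$ the curve is a monotone concave graph inside the bounded region. Its length is therefore bounded by $b_R+(R-u_R(b_R))$, which is uniformly bounded.
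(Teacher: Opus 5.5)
Your proposal is correct and follows essentially the paper's route. The paper takes continuity of $R\mapsto(b_R,u_R(b_R))$ directly from Lemma~\ref{lem:vertical-point-continuity}, and it obtains a \emph{single} curve from the fact that, by Lemma~\ref{lem:concavity-propagation}, $g_R$ has only one horizontal point on the relevant branch; that is exactly your Step~1, so your Step~2 compactness argument just re-proves the continuity the paper cites.

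One small caveat: you cannot simply ``choose'' a compact $K\subset\{r>0\}$ if the hypothesis only gives $r>0$ pointwise. Nothing is lost, though, because continuity at $R_0$ is purely local. The angle $\theta$ crosses $\pm\pi/2$ transversally at $P(R_0)$, since $|\dot\theta|=b_{R_0}/2\neq0$ there. Hence $C^1$ convergence $\Gamma_R\to\Gamma_{R_0}$ on a compact arclength interval already forces the first vertical point of $\Gamma_R$ to lie near $P(R_0)$.
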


\begin{proof}
	The continuity of the vertical points has already been established in Lemma~\ref{lem:vertical-point-continuity}. It remains to show that the points $(b_R,u_R(b_R))$, $u'_R(b_R)=-\infty$, form a single continuous curve. If there were another such curve, then for some value of $R$, the corresponding function $g_R$ would have two horizontal points. This contradicts Lemma~\ref{lem:concavity-propagation}, which implies that $g_R$ can have at most one such horizontal point on the relevant branch. Hence the vertical points form a continuous curve.
\end{proof}

\subsection{Initial Positions of the Vertical Points}

We first prove a result similar to Lemma 1 in \cite{Angenent1992}, but using a different argument in place of the rescaling technique.

\begin{lemma}\label{lem:large-R-estimate}
	Let $(x_R(t),r_R(t))$ be the solution of \eqref{eq:arclength-system} with initial condition \eqref{ivp:GammaR}, and regard $\theta$ as a function of $r$. Then, for $R$ sufficiently large, $\theta_R\left(R-\frac{1}{R}\right)$ converges to a fixed negative constant $\theta_1$. Moreover, there exist constants $C_1,C_2>0$, depending only on $n$, such that
	\[
	\frac{C_1}{R}
	\leq
	g_R\left(R-\frac{1}{R}\right)
	\leq
	\frac{C_2}{R}.
	\]
\end{lemma}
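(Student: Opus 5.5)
The plan is to blow up the region near the initial point $(0,R)$ at scale $1/R$. In that region the term $-\frac r2\cos\theta$ in \eqref{eq:arclength-system} dominates the curvature. After rescaling, the system should converge, as $R\to\infty$, to an explicit $R$-independent limit system, namely the grim reaper equation. The quantities in the statement then come from evaluating the limit curve at a fixed rescaled height and using continuous dependence on parameters over a compact parameter interval. This replaces Angenent's rescaling-of-the-whole-picture argument by a direct ODE blow-up.

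Concretely, set
\[
\sigma=Rt,\qquad X=Rx,\qquad S=R(R-r),
\]
so that $r=R-\frac SR$ and $r=R-\frac1R$ corresponds exactly to $S=1$. Substituting into \eqref{eq:arclength-system} gives
\[
\frac{dX}{d\sigma}=\cos\theta,\qquad \frac{dS}{d\sigma}=-\sin\theta,
\]
\[
\frac{d\theta}{d\sigma}=\Bigl(\frac{n-1}{Rr}-\frac{r}{2R}\Bigr)\cos\theta+\frac{X}{2R^2}\sin\theta
=-\frac12\cos\theta+\Bigl(\frac{n-1}{R^2-S}+\frac{S}{2R^2}\Bigr)\cos\theta+\frac{X}{2R^2}\sin\theta ,
\]
with initial data $X=S=\theta=0$. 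On any region $\{|X|+|S|\le K\}$ the error terms are $O(K/R^2)$ uniformly. Hence the right-hand side converges in $C^1$ to the limit system
\[
X'=\cos\theta,\qquad S'=-\sin\theta,\qquad \theta'=-\tfrac12\cos\theta .
\]
This limit system is solved explicitly. One gets $\theta_\infty(\sigma)=-2\arctan\tanh(\sigma/4)$, which decreases strictly from $0$ toward $-\pi/2$. Using $\theta$ as parameter, $dS/d\theta=2\tan\theta$, so $S=-2\log\cos\theta$, and
\[
\frac{dX}{d\theta}=-2,\qquad\text{so}\qquad X=-2\theta .
\]
Since $S\to\infty$ as $\theta\to-\pi/2$, the limit curve reaches $S=1$ at a unique $\sigma_1<\infty$. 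There $\theta_1:=-\arccos(e^{-1/2})\in(-\pi/2,0)$ and $X_1=-2\theta_1>0$. Moreover $S$ is strictly increasing with $dS/d\sigma=-\sin\theta>0$ there, so the crossing is transversal.

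Next I would apply continuous dependence on parameters (with $1/R^2$ as the parameter) on the compact interval $[0,\sigma_1+1]$. For $R$ large, the solution $(X_R,S_R,\theta_R)$ is uniformly close to the limit solution. By transversality, $S_R=1$ is attained at a unique nearby $\sigma_{1,R}\to\sigma_1$, with $\theta_R$ strictly decreasing, so $\theta$ is a well-defined function of $r$ there. This gives $\theta_R(R-\frac1R)\to\theta_1<0$. It also gives $X_R(\sigma_{1,R})\to X_1$, so $g_R(R-\frac1R)=X_R/R$ lies between $\frac{X_1}{2R}$ and $\frac{2X_1}{R}$ for $R$ large. Both constants depend only on $n$, since the threshold on $R$ comes from the $n$-dependent error term.

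The main care needed, rather than a real obstacle, is to justify that the rescaled trajectory stays in a bounded region, so that the error terms are uniformly small before time $\sigma_1+1$. This follows from a Gronwall comparison with the limit, which is bounded on that interval. One must also check that near $\sigma=0$ the use of $r$ as parameter is only applied after $\theta_R$ is bounded away from $0$, so the singularity of $d\theta/dr$ at the start is harmless. The statement "converges to a fixed constant" should be read as this limit. If one wants the literal estimate for all large $R$, it follows directly from the two-sided bounds above.
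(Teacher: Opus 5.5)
Your proof is correct, but it takes a genuinely different route from the paper. The paper explicitly avoids rescaling; it says so just before the lemma.

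\textbf{Your route.} You set $(\sigma,X,S)=(Rt,Rx,R(R-r))$ and view the rescaled system as a regular perturbation, with parameter $1/R^{2}$, of the grim-reaper system $\theta'=-\frac12\cos\theta$. You then read everything off the explicit limit curve $S=-2\log\cos\theta$, $X=-2\theta$, using continuous dependence on a compact $\sigma$-interval and transversality of $\{S=1\}$.

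\textbf{The paper's route.} It works with three direct differential inequalities and a bootstrap.
First, it drops the $-\frac g2$ term in \eqref{eq:g-equation} to get $\cot\varphi\,\varphi'<\frac r2-\frac{n-1}{r}$. Integrating over $r\in[R-\frac1R,R]$ bounds $\theta_R(R-\frac1R)$ from above.
Second, it bounds $g_R(R-\frac1R)\le C_2/R$ by integrating $d\theta/dx$, using only $\theta>-\frac\pi2$.
Third, it feeds this bound on $g$ back into the equation, together with a preliminary bound on $g'$. This gives the matching lower bound on $\sin\varphi_R$, hence the limit $\theta_1=\arcsin(e^{-1/2})-\frac\pi2$, which coincides with your $-\arccos(e^{-1/2})$. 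Finally it obtains $g_R\ge C_1/R$.

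\textbf{Comparison.} Both proofs rest on the same leading-order identity $\frac{d}{dS}\log\cos\theta\approx-\frac12$. The paper carries the error terms as explicit two-sided inequalities. You absorb them into a continuous-dependence statement. In your scaling the $\frac x2\sin\theta$ term is $O(R^{-2})$ on bounded sets and needs no bootstrap.

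Your version is shorter and sharper. It gives the exact limit $R\,g_R(R-\frac1R)\to 2\arccos(e^{-1/2})\approx 1.84$ and the limiting shape, consistent with the paper's upper bound, which is asymptotically $\pi/R$. It is not quantitative in $R$, however.

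The paper's version is elementary and yields explicit $R$-dependent bounds. Its integrating-factor technique is reused later for small $R$.

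\textbf{One point to state explicitly.} For $\theta$ to be a function of $r$ on all of $(0,\sigma_{1,R}]$ you need $\theta_R<0$ there, and $C^0$ closeness to $\theta_\infty$ alone does not give this near $\sigma=0$. It does follow from the uniform convergence $\theta_R'\to-\frac12\cos\theta_\infty\le -c<0$ on $[0,\sigma_1+1]$, which your $C^1$ convergence of the vector field provides.
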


\begin{proof}
	We divide the proof into three steps.
	
	\medskip
	
	\noindent
	\textbf{Step 1: An upper estimate for $\theta_R(R-\frac1R)$.}
	
	By the reflection symmetry of \eqref{eq:g-equation} with respect to the $r$-axis, we first consider the reflected branch. For notational convenience, we denote this reflected situation formally by $R\to-\infty$. From the initial condition, we have $\varphi_R(R)=\frac{\pi}{2}$. By \eqref{eq:g-equation},
	\begin{align*}
		\frac{d\varphi_R}{dr}
		&<
		\left(\frac{r}{2}-\frac{n-1}{r}\right)g'_R
		=
		\left(\frac{r}{2}-\frac{n-1}{r}\right)\tan\varphi_R.
	\end{align*}
	Hence
	\[
	\frac{d\varphi_R}{dr}\frac{\cos\varphi_R}{\sin\varphi_R}
	<
	\frac{r}{2}-\frac{n-1}{r}.
	\]
	Integrating this inequality from $R$ to $R-\frac1R$, and using $\varphi_R(R)=\frac{\pi}{2}$, we obtain
	\begin{equation*}
		\begin{aligned}
			\sin\varphi_R\left(R-\frac1R\right)
			&<
			\left(\frac{R}{R-\frac1R}\right)^{n-1}
			\exp\left[
			\frac14
			\left(R+R-\frac1R\right)
			\left(R-\frac1R-R\right)
			\right]
			\sin\varphi_R(R)\\
			&=
			\left(\frac{R}{R-\frac1R}\right)^{n-1}
			\exp\left[
			\frac14\left(-2+\frac1{R^2}\right)
			\right].
		\end{aligned}
	\end{equation*}
	Therefore, for $R<0$ with $|R|$ sufficiently large,
	\[
	\varphi_R\left(R-\frac1R\right)
	<
	\arcsin\left[
	\left(\frac{R}{R-\frac1R}\right)^{n-1}
	\exp\left(
	\frac14\left(-2+\frac1{R^2}\right)
	\right)
	\right].
	\]
	Since $\varphi_R+\theta_R=\frac{\pi}{2}$, we get
	\[
	\theta_R\left(R-\frac1R\right)
	>
	\frac{\pi}{2}
	-
	\arcsin\left[
	\left(\frac{R}{R-\frac1R}\right)^{n-1}
	\exp\left(
	\frac14\left(-2+\frac1{R^2}\right)
	\right)
	\right].
	\]
	Passing back to the original branch with $R\to+\infty$, we obtain
	\begin{equation}\label{eq:theta-upper-large-R}
		\theta_R\left(R-\frac1R\right)
		<
		\arcsin\left[
		\left(\frac{R}{R-\frac1R}\right)^{n-1}
		\exp\left(
		\frac14\left(-2+\frac1{R^2}\right)
		\right)
		\right]
		-\frac{\pi}{2}.
	\end{equation}
	\medskip

\noindent
\textbf{Step 2: An upper estimate for $g_R(R-\frac1R)$.}

We now consider the case $R\to+\infty$. On the part of the curve where $r>R-\frac1R$, we view the solution as a graph $r=u_R(x)$. By Lemma~\ref{lem:maximum-principle}, we have $u'_R<0$ on this branch. Let $x_1=g_R\left(R-\frac1R\right)$. Using \eqref{eq:u-equation}, we obtain
\begin{align*}
	-\frac{\pi}{2}
	<
	\int_0^{x_1}\frac{d\theta}{dx}\,dx
	&<
	\int_0^{x_1}
	\left(
	\frac{n-1}{u_R}-\frac{u_R}{2}
	\right)\,dx\\
	&<
	\frac{2(n-1)-\left(R-\frac1R\right)^2}
	{2\left(R-\frac1R\right)}
	x_1.
\end{align*}
Since the coefficient on the right-hand side is negative for $R$ sufficiently large, this gives $g_R\left(R-\frac1R\right)=x_1\leq\frac{C_2}{R}$, where $C_2>0$ is a constant depending only on $n$.

\medskip

\noindent
\textbf{Step 3: A lower estimate for $g_R(R-\frac1R)$.}

We first work again on the reflected branch, formally denoted by $R\to-\infty$. We claim that for $R<0$ with $|R|$ sufficiently large,
\begin{equation}\label{eq:gprime-lower-reflected-large-R}
	g'_R\left(R-\frac1R\right)\geq -\frac1R.
\end{equation}
Suppose, to the contrary, that $g'_R\left(R-\frac1R\right)<\frac1R$. Then there exists $s\in(0,1)$ such that $g'_R\left(R-\frac{s}{R}\right)=-\frac1R$. Using the estimate from Step 2 on the reflected branch, we have $g_R\left(R-\frac1R\right)\leq-\frac{C_2}{R}$. Hence, on the interval $\left(R,R-\frac1R\right)$, equation \eqref{eq:g-equation} gives
\begin{align*}
	\frac{d\varphi_R}{dr}
	&>
	\left(\frac{r}{2}-\frac{n-1}{r}\right)\tan\varphi_R
	+
	\frac{C_2}{2R}.
\end{align*}
By the choice of $s$, on $\left(R,R-\frac{s}{R}\right)$ we have $g'_R(r)\geq -\frac1R$. Therefore,
\begin{equation}\label{eq:phi-differential-lower-reflected-large-R}
	\frac{d\varphi_R}{dr}
	\frac{\cos\varphi_R}{\sin\varphi_R}
	>
	\left(\frac{r}{2}-\frac{n-1}{r}\right)
	-
	\frac{C_2}{2}.
\end{equation}
Integrating \eqref{eq:phi-differential-lower-reflected-large-R} from $R$ to $R-\frac{s}{R}$, and using $\varphi_R(R)=\frac{\pi}{2}$, we obtain
\begin{equation}\label{eq:sinphi-lower-reflected-large-R}
	\begin{aligned}
		&\sin\varphi_R\left(R-\frac{s}{R}\right)\\
		&>\left(\frac{R}{R-\frac{s}{R}}\right)^{n-1}
		\exp\left[
		\frac14
		\left(R+R-\frac{s}{R}\right)
		\left(R-\frac{s}{R}-R\right)
		\right]
		\exp\left[
		\frac{-C_2}{2}
		\left(R-\frac{s}{R}-R\right)
		\right]\\
		&=
		\left(\frac{R}{R-\frac{s}{R}}\right)^{n-1}
		\exp\left[
		\frac14\left(-2+\frac{s}{R^2}\right)s
		\right]
		\exp\left[
		\frac{C_2}{2R}s
		\right].
	\end{aligned}
\end{equation}
Letting $R\to-\infty$, we have $$\sin\varphi_R\left(R-\frac{s}{R}\right)>e^{-s/2}.$$ This contradicts 
$$g'_R\left(R-\frac{s}{R}\right)=\frac1R\to0.$$ 
Thus \eqref{eq:gprime-lower-reflected-large-R} holds.

Applying the same argument on the whole interval $\left(R,R-\frac1R\right)$, we obtain
\[
\sin\varphi_R\left(R-\frac1R\right)
\geq
\left(\frac{R}{R-\frac1R}\right)^{n-1}
\exp\left[
\frac14\left(-2+\frac1{R^2}\right)
\right]
\exp\left[
\frac{C_2}{2R}
\right].
\]
Returning to the original branch $R\to+\infty$, this gives
\[
\theta_R\left(R-\frac1R\right)
\geq
\arcsin\left[
\left(\frac{R}{R-\frac1R}\right)^{n-1}
\exp\left(
\frac14\left(-2+\frac1{R^2}\right)
\right)
\exp\left(
-\frac{C_2}{2R}
\right)
\right]
-\frac{\pi}{2}.
\]
Combining this estimate with \eqref{eq:theta-upper-large-R}, we get
\[
\begin{aligned}
	&
	\arcsin\left[
	\left(\frac{R}{R-\frac1R}\right)^{n-1}
	\exp\left(
	\frac14\left(-2+\frac1{R^2}\right)
	\right)
	\right]
	-\frac{\pi}{2}
	\\
	&\qquad >
	\theta_R\left(R-\frac1R\right)
	\\
	&\qquad \geq
	\arcsin\left[
	\left(\frac{R}{R-\frac1R}\right)^{n-1}
	\exp\left(
	\frac14\left(-2+\frac1{R^2}\right)
	\right)
	\exp\left(
	-\frac{C_2}{2R}
	\right)
	\right]
	-\frac{\pi}{2}.
\end{aligned}
\]
Therefore, $\theta_R\left(R-\frac1R\right)\to\theta_1:=\arcsin(e^{-1/2})-\frac{\pi}{2}<0$ as $R\to+\infty$.

It remains to prove the lower bound for $g_R(R-\frac1R)$. Let $x_2=g_R\left(R-\frac1R\right)$. By Step 2, $x_2\leq \frac{C_2}{R}$. Since $\theta_R\left(R-\frac1R\right)\to\theta_1<0$, for $R$ sufficiently large the angle $\theta_R$ is bounded away from both $0$ and $-\frac{\pi}{2}$ on the relevant branch. Using \eqref{eq:u-equation}, we obtain
\begin{align*}
	\theta_R\left(R-\frac1R\right)
	=
	\int_0^{x_2}\frac{d\theta}{dx}\,dx
	&=
	\int_0^{x_2}
	\left(
	\frac{n-1}{u_R}
	-\frac{u_R}{2}
	+\frac{x}{2}u'_R
	\right)\,dx\\
	&>
	\int_0^{x_2}
	\left(
	\frac{n-1}{u_R}
	-\frac{u_R}{2}
	+\frac{C_2}{2R}\tan\theta_1
	\right)\,dx\\
	&>
	\left(
	\frac{2(n-1)-R^2}{2R}
	+
	\frac{C_2}{2R}\tan\theta_1
	\right)x_2.
\end{align*}
Since $\theta_R(R-\frac1R)$ is bounded above by a fixed negative constant and the coefficient on the right-hand side is of order $-R$, it follows that $g_R\left(R-\frac1R\right)=x_2\geq\frac{C_1}{R}$ for $R$ sufficiently large, where $C_1>0$ depends only on $n$. This completes the proof.
\end{proof}

Using arguments similar to Lemma 2.3 and Lemma 2.4 in \cite{peng2025f} or \cite{Angenent1992}, we obtain the following result, which describes the position of the vertical point as $R\to+\infty$.

\begin{lemma}\label{lem:large-R-vertical-point}
For $R$ sufficiently large, the solution $u_R$ of the initial value problem \eqref{ivp:uR} has a vertical point $(b_R,u_R(b_R))$. Moreover, as $R\to+\infty$, $(b_R,u_R(b_R))\to(0,+\infty)$.
\end{lemma}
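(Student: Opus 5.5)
We will prove the statement by continuing the analysis of Lemma~\ref{lem:large-R-estimate} past the level $r=R-\frac1R$. The plan is to show that the curve $\Gamma_R$ turns vertical within a further drop in height of order $\frac{1}{R}$, and within a further horizontal distance of order $\frac{1}{R}$. Lemma~\ref{lem:large-R-estimate} gives, at the point $r_0:=R-\frac1R$, the horizontal position $x_0=g_R(r_0)\in[\frac{C_1}{R},\frac{C_2}{R}]$ and the angle $\theta_R(r_0)\to\theta_1\in(-\frac{\pi}{2},0)$. Starting from this point, we regard the branch as a graph $r=u_R(x)$ with $u_R'<0$. This is allowed by Lemma~\ref{lem:maximum-principle}, since $u_R$ has its maximum at $x=0$. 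By Lemma~\ref{lem:concavity-propagation}, as long as the branch stays in the region $u_R>\sqrt{2(n-1)}$, the angle $\theta$ is decreasing in $x$.

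\textbf{Step 1 (the angle reaches $-\frac{\pi}{2}$).} Along the branch with $r\in[R-\frac{K}{R},r_0]$ and $x\le \frac{K}{R}$, for a large constant $K$, the equation \eqref{eq:u-equation} gives
\[
\frac{d\theta}{dx}\le \frac{n-1}{u_R}-\frac{u_R}{2}\le -\frac{R}{4},
\]
because the term $\frac{x}{2}u_R'$ is nonpositive when $x>0$ and $u_R'<0$. It is more convenient to use the arclength form \eqref{eq:arclength-system}. There
\[
\dot\theta=\Big(\frac{n-1}{r}-\frac{r}{2}\Big)\cos\theta+\frac{x}{2}\sin\theta\le -\frac{R}{4}\cos\theta,
\]
since $\sin\theta<0$. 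Compare $\theta$ with the solution of $\dot\psi=-\frac{R}{4}\cos\psi$. This comparison shows that $\theta$ reaches values arbitrarily close to $-\frac{\pi}{2}$ after arclength $O(\frac{1}{R})$.

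\textbf{Step 2 (it actually reaches $-\frac{\pi}{2}$).} Step 1 only gives approach, because $\cos\theta\to 0$. To obtain an actual vertical point we switch to $g_R$ and the angle $\varphi=\frac{\pi}{2}+\theta$. By \eqref{eq:g-equation},
\[
\frac{d\varphi}{dr}=\Big(\frac{r}{2}-\frac{n-1}{r}\Big)\tan\varphi-\frac{g}{2}.
\]
Going downward in $r$, with $g>0$ and $\varphi$ near $0^{+}$ as the curve becomes vertical, the term $-\frac{g}{2}\le -\frac{C_1}{2R}$ is strictly negative. This pushes $\varphi$ through $0$ in the decreasing-$r$ direction, where the nondegenerate maximum of Lemma~\ref{lem:maximum-principle}(ii) occurs. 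Concretely, once $\tan\varphi\lesssim \frac{1}{R^2}$, the right-hand side has a definite sign, so $g'$ vanishes after a further $r$-change of size $O(\frac{1}{R})$. This produces $b_R$ with $u_R'(b_R)=-\infty$.

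\textbf{Step 3 (confinement and the limit).} We must check that the whole transition stays in the window $r\ge R-\frac{K}{R}$, $x\le\frac{K}{R}$ used in Step 1. In that window $|\dot x|\le1$, the elapsed arclength is $O(\frac{1}{R})$, and $r$ is decreasing, so the window is self-consistent for $K$ chosen large depending only on $n$. This is a continuity (bootstrap) argument. We conclude $b_R\le \frac{K'}{R}\to0$ and $u_R(b_R)\ge R-\frac{K'}{R}\to+\infty$, which is the claim. This mirrors the rescaling picture of \cite{Angenent1992}: blowing up by the factor $R$ near $(0,R)$, the profile converges to the grim-reaper/circle-type solution of $\dot\theta=-\frac12\cos\theta\cdot(\text{const})$, which turns vertical in finite rescaled time. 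As an alternative to Steps 1–2, we could make this limit rigorous via continuous dependence, since a transversal vertical point persists.

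The main obstacle is Step 2. The degeneration $\cos\theta\to0$ means the $u$-equation alone cannot certify arrival at $-\frac{\pi}{2}$, so the switch to the $g$-equation and the quantitative positive lower bound $g\ge \frac{C_1}{R}$ from Lemma~\ref{lem:large-R-estimate} are essential. Some care is also needed to keep all constants uniform in $R$.
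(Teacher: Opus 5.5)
The paper gives no proof of this lemma; it defers to Angenent's rescaling argument and to Lemmas 2.3--2.4 of \cite{peng2025f}, on top of Lemma~\ref{lem:large-R-estimate}. Your mechanism is the right one: grim-reaper-type turning with curvature of order $R$, then the $x$-term forcing a transversal vertical point. But Steps~2--3 contain a genuine error of scale, and the bootstrap as you state it does not close.

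Near $(0,R)$ the rescaled profile is the grim reaper. With $\rho=R(r-R)$ one has $\cos\theta\approx e^{\rho/2}$, which matches $\cos\theta_1=e^{-1/2}$ at $\rho=-1$ in Lemma~\ref{lem:large-R-estimate}. This curve never becomes vertical in finite rescaled time, contrary to your closing remark. So your alternative route, via persistence of a transversal vertical point, has nothing to perturb from.

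Write $\delta=\theta+\frac\pi2$. Then \eqref{eq:arclength-system} gives $\dot\delta\approx-\frac R2\delta-\frac x2$ with $x\approx\frac\pi R$. The $x$-term only takes over once $\delta\lesssim R^{-2}$, and reaching that level from $\delta=O(1)$ costs arclength about $\frac2R\ln R^2=\frac{4\ln R}{R}$.

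Inside your window the reverse inequality $\dot\delta\ge-\frac R2\delta-\frac{K}{2R}$ also holds. Integrating it shows $\delta\ge c(K)-O(R^{-2})$ at depth $\frac KR$, with $c(K)>0$. So for any fixed $K$ and large $R$, the curve is provably not vertical in $r\ge R-\frac KR$. Consequently:
\begin{itemize}
\item ``the elapsed arclength is $O(\frac1R)$'' is false;
\item your window is not self-consistent;
\item your conclusion $u_R(b_R)\ge R-\frac{K'}{R}$ is false; in fact $u_R(b_R)=R-\Theta(\frac{\ln R}{R})$.
\end{itemize}
There is also a sign slip in Step~2. For $\varphi=\frac\pi2+\theta$ one has $\frac{d\varphi}{dr}=(\frac r2-\frac{n-1}{r})\tan\varphi+\frac g2$. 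With $-\frac g2$ as you wrote it, decreasing $r$ would push $\varphi$ away from $0$.

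The lemma itself survives if you use a larger window. Work in $r\ge\frac R2$, where $\frac{n-1}{r}-\frac r2\le-\frac R8$ for large $R$. Since $x\ge0$ and $\sin\theta\le0$, you get $\dot\theta\le-\frac R8\cos\theta$. Hence $\dot x=\cos\theta\le-\frac8R\dot\theta$, which gives $x\le\frac{4\pi}{R}$ a priori, with no bootstrap needed in $x$. Past $r=R-\frac1R$ you have, for large $R$, $x\ge\frac{C_1}{R}$ and $\sin\theta\le-\frac12$. Together with $\cos\theta=\sin\delta\ge\frac2\pi\delta$ this gives
\[
\dot\delta\le-\frac{R}{4\pi}\,\delta-\frac{C_1}{4R}.
\]
This forces $\delta=0$, transversally, within arclength $\frac{4\pi}{R}\ln\bigl(1+\frac{R^2\delta_0}{\pi C_1}\bigr)=O(\frac{\ln R}{R})$, where $\delta_0\le\frac\pi2$ is the value of $\delta$ at $r=R-\frac1R$. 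The drop in $r$ is then $O(\frac{\ln R}{R})\ll\frac R2$, so this window is consistent. It follows that $b_R\le\frac{4\pi}{R}\to0$ and $u_R(b_R)\ge R-O(\frac{\ln R}{R})\to+\infty$.
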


\begin{lemma}\label{lem:small-R-zero-gprime}
Suppose that $\sqrt{2(n-1)}\geq R>0$, and let $g_R(r)$ be a solution of \eqref{eq:g-equation} satisfying the initial conditions $g(R)\geq 0$ and $g'(R)=+\infty$. Then, when $R$ is sufficiently small, $g'_R$ must have a zero. If this zero is attained at $r=\bar R$, then $\bar R\to0$ as $R\to0$.
\end{lemma}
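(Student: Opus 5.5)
We work with the angle $\varphi_R=\arctan g_R'$, so $\varphi_R(R)=\frac{\pi}{2}$. By \eqref{eq:g-equation},
\[
\frac{d\varphi_R}{dr}=\Big(\frac r2-\frac{n-1}{r}\Big)\tan\varphi_R-\frac{g_R}{2}.
\]
The plan is to follow $g_R$ for $r>R$ while $g_R'>0$. On that stretch $g_R$ is increasing and $g_R\ge g_R(R)\ge0$. For $r<\sqrt{2(n-1)}$ the coefficient $\frac r2-\frac{n-1}{r}$ is negative, so both terms on the right are nonpositive while $0<\varphi_R<\frac\pi2$. Dropping the $-\frac{g_R}{2}$ term, dividing by $\tan\varphi_R$ and integrating from $R$ to $r$, exactly as in Step~1 of Lemma~\ref{lem:large-R-estimate}, we get
\[
\sin\varphi_R(r)\le \Big(\frac{R}{r}\Big)^{n-1}\exp\Big(\frac{r^2-R^2}{4}\Big)
\]
as long as $g_R'>0$ on $(R,r)$. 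The right-hand side is at most $C_n(R/r)^{n-1}$ for $r\le\sqrt{2(n-1)}$. So at $r=\Lambda R$, with $\Lambda$ a large fixed constant, $\varphi_R$ is already small: $\sin\varphi_R(\Lambda R)\le C_n\Lambda^{1-n}$. Note this decay uses $n\ge2$.

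This bound alone only makes $\varphi_R$ decay like a power and does not force a zero, because $\tan\varphi_R\to0$ slows the decrease. The forcing term $-\frac{g_R}{2}$ must finish the job. The height is controlled by
\[
g_R(\Lambda R)-g_R(R)=\int_R^{\Lambda R}\tan\varphi_R\,dr,
\]
which is bounded by a constant times $R$ because the integrand decays like $(R/r)^{n-1}$ (with a logarithmic factor when $n=2$). So $g_R$ stays essentially at $g_R(R)$ up to $r=\Lambda R$.

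Next we track $\psi=\tan\varphi_R=g_R'$ past $r=\Lambda R$. Writing the equation as
\[
\psi'=(1+\psi^2)\Big[\Big(\frac r2-\frac{n-1}{r}\Big)\psi-\frac{g_R}{2}\Big],
\]
we have, for $r$ in $[\Lambda R,\,c]$ with $c$ small,
\[
\psi'\le-\frac{n-1}{2r}\,\psi-\frac{g_R(R)}{2}.
\]
When $g_R(R)>0$ is bounded below, for instance when $g_R(R)$ is comparable to a fixed value, integrating gives $\psi(r)\le \psi(\Lambda R)(\Lambda R/r)^{(n-1)/2}-\frac{g_R(R)}{2}(r-\Lambda R)/(1+\cdots)$. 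The first term is $O(\Lambda^{1-n})$ and the second reaches a fixed negative size within a distance $O(\Lambda^{1-n}/g_R(R))$. Hence $\psi$ vanishes at some $\bar R\le \Lambda R+o(1)$. Letting $R\to0$ and then $\Lambda\to\infty$ suitably gives $\bar R\to0$.

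The main obstacle is uniformity with respect to the initial height $g_R(R)\ge0$, which is only assumed nonnegative. If $g_R(R)=0$ the forcing vanishes at first. The equation in $\psi$ is then linear-dominated, and we need the rescaling $r=R\rho$ to see whether $\psi$ truly reaches zero near $r\sim R$. I would treat this by rescaling. With $\tilde g(\rho)=g_R(R\rho)/R$, the equation converges as $R\to0$ to the minimal-surface-type equation
\[
\frac{\tilde g''}{1+\tilde g'^2}=-\frac{n-1}{\rho}\,\tilde g'.
\]
Its solution with $\tilde g'(1)=+\infty$ is the catenoid profile, and for $n\ge3$ this has a finite height gain with $\tilde g'\to0$ only asymptotically. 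The fact that $\tilde g'$ does not vanish in the limit shows that the zero comes from the $-\frac g2$ term at scale $r\gg R$. So I would combine the rescaled limit, which shows $g_R$ is about $g_R(R)+aR$ with $a>0$ by $r=\Lambda R$, with the forcing estimate above. In the $g_R(R)=0$ case this forcing is at least $aR/2$, and $\psi$ reaches zero before
\[
r\lesssim \Lambda R+\big(\Lambda^{1-n}/(aR)\big)^{\theta}
\]
for some exponent $\theta$. Choosing $\Lambda=R^{-\beta}$ with a suitable small $\beta$ balances the two terms and still yields $\bar R\to0$. Checking that the exponents balance for every $n\ge2$ (the logarithmic case $n=2$ in particular) is where I expect the real difficulty.
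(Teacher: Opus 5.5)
Your route is essentially the paper's. Both arguments rest on three things: the Step~1 bound $\sin\varphi_R(r)\le (R/r)^{n-1}e^{(r^2-R^2)/4}$; a lower bound $g_R\gtrsim R$ once $r$ is a fixed multiple of $R$; and the forcing $-\frac{g_R}{2}$ overtaking $g_R'$ at a scale tending to $0$. For the lower bound, the paper proves $g_R(2R)>C_4R$ directly, while you use the rescaled catenoid limit plus monotonicity of $g_R$, which is fine.

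The gap is the final step. You leave it open, and the scheme you propose for it fails when $n=2$. For $n\ge3$ it closes at once and you should write it out. Take $F=\frac12 g_R(\rho_0R)\ge\frac{a}{2}R$ for a fixed $\rho_0$. Integrating your inequality $\psi'\le-\frac{n-1}{2r}\psi-F$ from $\Lambda R$ shows that $\psi$ vanishes before $\max\{2\Lambda R,\;C\Lambda^{1-n}/(aR)\}$. So $\theta=1$, and $\Lambda=R^{-\beta}$ works for any $\frac{1}{n-1}<\beta<1$; in particular $\beta$ is not ``small''. Your uniformity worry is also harmless. Rescale $(g_R(R\rho)-g_R(R))/R$: the extra term $-\frac{R}{2}g_R(R)$ disappears in the limit uniformly for bounded $g_R(R)$. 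A large $g_R(R)$ only strengthens the forcing, and the Step~1 decay bound does not depend on it.

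For $n=2$ the plan as stated cannot succeed. With forcing only $\ge\frac{a}{2}R$ for a fixed $a$ and $\psi(\Lambda R)\lesssim\Lambda^{-1}$, the best you get is $\bar R\lesssim\Lambda R+C/(a\Lambda R)$. Its minimum over $\Lambda$ is of order $\sqrt{C/a}$, so no choice $\Lambda=\Lambda(R)$ gives $\bar R\to0$. This is the same obstruction that makes the paper restrict to $n>2$: at fixed $r$, $g_R'/g_R\to0$ only when $n>2$. The paper then disposes of $n=2$ by ``continuous dependence on $n$'', which is not an argument without estimates uniform in $n$.

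The missing idea has two parts. First, for $n=2$ the limit profile is the catenary $\rho=\cosh\tilde g$, whose height gain $a(\rho_0)=\mathrm{arccosh}\,\rho_0$ is unbounded. Second, the scale where this forcing level is reached must be decoupled from the scale where the decay bound is used. Concretely, take $g_R\ge a(\rho_0)R$ for $r\ge\rho_0R$, and apply the decay bound at an independent $t\ge2\rho_0R$. From $\varphi_R'\le-\frac{g_R}{2}$ you get $\varphi_R(r)\le CR/t-\frac12 a(\rho_0)R(r-t)$, hence $\bar R\le t+2C/(a(\rho_0)t)$. Choosing $t=\sqrt{2C/a(\rho_0)}$ gives $\bar R\le2\sqrt{2C/a(\rho_0)}$ for all $R\le R_0(\rho_0)$. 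Letting $\rho_0\to\infty$ gives $\bar R\to0$, which would also give an actual proof of the case $n=2$.
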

\begin{proof}
We argue by contradiction. Suppose that for arbitrarily small $R$, there exists $d$ such that $g'_R(r)>0$ on $(R,d)$. The proof is divided into two steps.

\noindent\textbf{Step 1.}
Correspondingly, consider the solution of \eqref{eq:g-equation} satisfying the initial conditions $g(R)=0$ and $g'(R)=+\infty$. By \eqref{eq:g-equation}, we have
\begin{equation}\label{eq:phi-differential-upper-small-R}
	\frac{d\varphi}{dr}\frac{\cos\varphi}{\sin\varphi}
	<
	\frac{r}{2}-\frac{n-1}{r}.
\end{equation}
Integrating this inequality over the interval $(R,2R)$, we obtain
\begin{equation}\label{eq:sinphi-upper-2R-small-R}
	\sin\varphi_R(2R)
	<
	e^{\frac{3}{4}R^2}\left(\frac{1}{2}\right)^{n-1}.
\end{equation}

We now estimate $g_R(2R)$. Let $x_3=g_R(2R)$. Using \eqref{eq:u-equation}, we obtain
\begin{equation*}
	\begin{aligned}
		\frac{\pi}{2}
		&>
		\int_{0}^{x_3}\frac{d\theta_R}{dx}\,dx
		=
		\int_{0}^{x_3}
		\left(
		\frac{n-1}{u_R}-\frac{u_R}{2}+\frac{1}{2}xu'_R
		\right)\,dx\\
		&>
		\int_{0}^{x_3}
		\left(
		\frac{n-1}{u_R}-\frac{u_R}{2}
		\right)\,dx\\
		&>
		\left(
		\frac{n-1}{2R}-\frac{2R}{2}
		\right)x_3.
	\end{aligned}
\end{equation*}
Thus $g_R(2R)<\frac{2R}{2(n-1)-2R^2}\frac{\pi}{2}$. Therefore, when $R$ is sufficiently small, there exists a constant $C_3$, depending only on $n$, such that
\begin{equation}\label{eq:g-upper-2R-small-R}
	g_R(2R)<C_3R.
\end{equation}

\medskip

\noindent\textbf{Step 2.}
On the other hand, the upper bound for $g_R(2R)$ above implies that, when $R$ is sufficiently small, $g_R(r)<g_R(2R)<C_3R$ on the interval $(R,2R)$. By \eqref{eq:g-equation}, we have
\begin{equation}\label{eq:phi-differential-lower-small-R}
	\begin{aligned}
		\frac{d\varphi_R}{dr}
		&>
		\left(\frac{r}{2}-\frac{n-1}{r}\right)
		\frac{\sin\varphi_R}{\cos\varphi_R}
		-\frac{g_R}{2}\\
		&>
		\left(\frac{r}{2}-\frac{n-1}{r}\right)
		\frac{\sin\varphi_R}{\cos\varphi_R}
		-\frac{1}{2}C_3R.
	\end{aligned}
\end{equation}

We first prove by contradiction that $g'_R(2R)>R$ when $R$ is sufficiently small. Suppose that there exists $s<2R$ such that $g'_R(s)=R$. Since $g''<0$, we have $g'_R>R$ on the interval $(R,s)$. Then \eqref{eq:g-equation} gives, on $(R,s)$,
\begin{equation*}
	\frac{d\varphi_R}{dr}
	\frac{\cos\varphi_R}{\sin\varphi_R}
	>
	\frac{r}{2}-\frac{n-1}{r}
	-C_3R\frac{1}{2R}.
\end{equation*}
Integrating this inequality over $(R,s)$, we obtain
\begin{equation}\label{eq:sinphi-lower-s-small-R}
	\sin\varphi_R(s)
	>
	e^{\frac{1}{4}(s^2-R^2)}
	\left(\frac{R}{s}\right)^{n-1}
	e^{-\frac{1}{2}C_3(s-R)}.
\end{equation}
When $R$ is sufficiently small, this contradicts $g'_R(s)=R$. Hence $g'_R(2R)>R$ for $R$ sufficiently small. Using the techniques in \eqref{eq:phi-differential-lower-small-R} and \eqref{eq:sinphi-lower-s-small-R}, we obtain
\begin{equation}\label{eq:sinphi-lower-2R-small-R}
	\sin\varphi_R(2R)
	>
	e^{\frac{3}{4}R^2}
	\left(\frac{1}{2}\right)^{n-1}
	e^{-\frac{1}{2}C_3R}.
\end{equation}
Inequalities \eqref{eq:sinphi-lower-2R-small-R} and \eqref{eq:sinphi-upper-2R-small-R} imply that, as $R\to0$,
\begin{equation}\label{eq:phi-limit-2R-small-R}
	\varphi_R(2R)
	\to
	\arcsin\left[\left(\frac{1}{2}\right)^{n-1}\right].
\end{equation}
Thus, without loss of generality, we may take $\varphi_R(2R)=\arcsin\left[\left(\frac{1}{2}\right)^{n-1}\right]:=\varphi_1$, which is a fixed constant.

Now we can estimate $g_R(2R)$ from below. Equation \eqref{eq:u-equation} gives
\begin{equation*}
	\frac{\pi}{2}-\varphi_1<\int_{0}^{x_4}[\frac{n-1}{R}-\frac{R}{2}+\frac{x}{2}\tan(\frac{\pi}{2}-\varphi_1)]dx.
\end{equation*}
Taking $x_4=g_R(2R)$, then there exists constant $C_4$ such that 
\begin{equation}\label{eq2.13}
	g_R(2R)>C_4R.
\end{equation}

By the assumptions $g_R>0$ and $g'_R>0$, we have $g''_R<0$ for $r>R$. Therefore, for $r>2R$, $\varphi_R(r)<\varphi_R(2R)$, and hence
\begin{equation}\label{eq:cosphi-comparison-small-R}
	\frac{1}{\cos\varphi_R(r)}
	<
	\frac{1}{\cos\varphi_R(2R)}.
\end{equation}

As in the integration over $(R,2R)$ in \eqref{eq:sinphi-lower-s-small-R}, integrating \eqref{eq:phi-differential-upper-small-R} over the interval $(R,r)$ gives
\[
\sin\varphi_R(r)
<
e^{\frac{1}{4}(r^2-R^2)}
\left(\frac{R}{r}\right)^{n-1}.
\]
Combining this with \eqref{eq:cosphi-comparison-small-R}, we obtain the following estimate on $(2R,r)$:
\begin{equation}\label{eq:gprime-upper-small-R}
	g'_R=\frac{\sin\varphi_R}{\cos\varphi_R}<\frac{\sin\varphi_R}{\cos\varphi_1}<\frac{1}{\cos\varphi_1}e^{\frac{1}{4}(r^2-R^2)}(\frac{R}{r})^{n-1}.
\end{equation}

\textbf{Step 2.} Now we prove this lemma. Consider the following function defined on $(\frac{d}{2},d)$,
\begin{equation}
	\tilde{g}_R=\frac{g_R}{g_R(\frac{d}{2})}.
\end{equation}
Together with \eqref{eq2.13}, we have the following estimate on $[\frac{d}{2},d]$
$$\tilde{g}'_R<\frac{C_4}{\cos\varphi_1}e^{\frac{1}{4}(r^2-R^2)}(\frac{R}{r})^{n-2}\to 0,$$ 
as $R\to 0$ for $n>2$. It is also easy to check that $\tilde{g}''_R$ are bounded on $[\frac{d}{2},d]$ by \eqref{eq2.13} and \eqref{eq:gprime-upper-small-R}. We then deduce a similar contradiction, as in Lemma 2.4 of \cite{peng2025f}, for $n>2$. Then there exists $\bar{R}\in(R,d)$ such that $g'_R(\bar{R})=0$.  Since $d$ can be chosen arbitrarily small as $R\to0$, it follows that $\bar{R}\to0$. Moreover, note that $g_R$ attains its maximum at $\bar{R}$, by continuous dependence on $n$,  this lemma holds for $n=2$. 
\end{proof}

The following lemma uses techniques similar to those in Lemma~\ref{lem:large-R-estimate}.

\begin{lemma}\label{lem:small-R-vertical-point}
Suppose that $R\in(0,\sqrt{2(n-1)})$, and let $u_R$ be the solution of the initial value problem \eqref{ivp:uR}. Then, as $R\to0$, the vertical point $(b_R,u_R(b_R))$ of $u_R$ exists, and $(b_R,u_R(b_R))\to(0,0)$.
\end{lemma}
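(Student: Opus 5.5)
We follow the strategy of Lemma~\ref{lem:large-R-estimate}, now near the axis. For small $R$ we have $u_R''(0)=\frac{n-1}{R}-\frac R2>0$, so $u_R$ starts convex and increasing. The proof has two parts: a \emph{local} part, showing that $\Gamma_R$ turns through angle $\frac\pi2$ within a region of size $O(R)$, and a \emph{global} part, showing that the point where this happens is the vertical point of $u_R$ and tends to $(0,0)$.

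\textbf{Step 1 (the curve must become vertical).} On the branch where $u_R'>0$ we have
\[
\frac{d\theta}{dx}=\frac{n-1}{u_R}-\frac{u_R}{2}+\frac x2u_R' > \frac{n-1}{u_R}-\frac{u_R}{2}.
\]
As long as $u_R\le 2R$, this lower bound is at least $\frac{n-1}{2R}-R$. Integrating in $x$ and using $\theta<\frac\pi2$ before the vertical point, we get: if no vertical point occurs while $u_R\le 2R$, the $x$-extent of that portion is at most $\frac{\pi}{2}\cdot\frac{2R}{2(n-1)-4R^2}=O(R)$.

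To treat the portion where $u_R>2R$, we switch to the arclength system \eqref{eq:arclength-system}. We use $\dot\theta\ge\big(\frac{n-1}{r}-\frac r2\big)\cos\theta$ while $r$ stays small and $\cos\theta\ge0$. We also have $\frac{d\theta}{dr}=\dot\theta/\sin\theta$. Integrating $\frac{\cos\theta}{\sin\theta}\,d\theta \gtrsim \frac{n-1}{r}\frac{\cos^2\theta}{\sin\theta}\,dr$ is awkward, so instead we estimate $\frac{d}{dr}\sin\theta=\cos\theta\,\frac{d\theta}{dr}\ge(\frac{n-1}{r}-\frac r2)\cot\theta\cos\theta$. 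This mirrors the computation \eqref{eq:sinphi-upper-2R-small-R} with the roles of $x$ and $r$ exchanged.

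A simpler route is to use the upper bound $x=O(R)$ together with the relation $u_R'' = (1+u'^2)\big(\frac{n-1}{u}-\frac u2+\frac x2u'\big)$. Where $u_R\le M R$ for a fixed large $M$, this gives $u''\ge c(1+u'^2)/R$. Comparing with the ODE $v''=c(1+v'^2)/R$, whose solutions $v=-\frac Rc\log\cos(cx/R)$ blow up in slope at $x=\frac{\pi R}{2c}$, we see that $u_R'$ reaches $+\infty$ at some $b_R\le C R$, provided $u_R$ stays below $MR$ up to that point. The comparison solution rises only by $O(R)\cdot\log$ before blow-up. To control that logarithm we use the angle form: $\theta$ increases at rate at least $c/R$ in $x$, so $\theta$ reaches $\frac\pi2$ within $x\le \frac{\pi R}{2c}$. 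Meanwhile $u_R$ gains $\int\tan\theta\,dx$, which is finite by switching variables to $r$ near $\theta=\frac\pi2$, where $\frac{dr}{d\theta}=\tan\theta/\frac{d\theta}{dx}\le \frac{R}{c}\tan\theta$. This is still log-divergent, so the bound must be closed the other way, which is the main obstacle below.

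\textbf{Step 2 (main obstacle: bounding $u_R(b_R)$ by $O(R)$).} We work with $g_R$ near the vertical point, which by Lemma~\ref{lem:maximum-principle}(ii) corresponds to a local maximum of $g_R$. Once $\theta\ge\theta_0$ for a fixed angle $\theta_0$ at height $r_0\le MR$, we view the curve as the graph $x=g_R(r)$ with $\varphi=\frac\pi2-\theta$. Equation \eqref{eq:g-equation} then gives
\[
\frac{d\varphi}{dr}\frac{\cos\varphi}{\sin\varphi}<\frac r2-\frac{n-1}{r}.
\]
Integrating as in \eqref{eq:sinphi-upper-2R-small-R} yields $\sin\varphi(r)\le \sin\varphi(r_0)(r_0/r)^{n-1}e^{(r^2-r_0^2)/4}$. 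Hence $\varphi$ decreases toward $0$, that is, $u_R'\to+\infty$, but in this estimate only polynomially.

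To get an actual zero of $g_R'$ at height $O(R)$, we use the extra term $-\frac g2$ together with the lower bound $g\ge C_4R$ obtained from the $x$-extent estimate as in \eqref{eq2.13}. Alternatively, and this is the route we would try first, we apply Lemma~\ref{lem:small-R-zero-gprime} directly. Rescaling so that the point where $\theta$ first reaches $\frac\pi4$ plays the role of the initial point, that lemma's argument with initial height $r_0=O(R)$ gives a zero $\bar R$ of $g_R'$ with $\bar R\to0$. Concretely, we rerun its contradiction argument with initial angle $\frac\pi4$ instead of $\frac\pi2$; only constants change.

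\textbf{Step 3 (conclusion).} The zero $\bar R$ of $g_R'$ is the vertical point, so $u_R(b_R)=\bar R\to0$. Moreover, $b_R=g_R(\bar R)\le g_R(r_0)+(\bar R-r_0)\tan\varphi(r_0)$, where $g_R(r_0)=O(R)$ by Step 1. Since $\bar R\to0$, this bound gives $b_R\to0$, and therefore $(b_R,u_R(b_R))\to(0,0)$. Convexity on $(0,b_R)$ follows from Lemma~\ref{lem:vertical-point-continuity}(ii).
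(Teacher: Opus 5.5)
Your Steps 2--3 have the same core as the paper's proof. Both obtain the existence of the vertical point and $u_R(b_R)=\bar R\to0$ from Lemma~\ref{lem:small-R-zero-gprime}, and then bound $b_R=g_R(\bar R)$.

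No rescaling or rerun from the point where $\theta=\frac\pi4$ is needed. The hypothesis $g(R)=0$, $g'(R)=+\infty$ of that lemma is exactly the initial condition $u(0)=R$, $u'(0)=0$ of $u_R$ read as $x=g_R(r)$. So the lemma applies verbatim, and you avoid the unchecked claim that ``only constants change'' in an argument that itself leans on an external reference.

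Where you genuinely differ is the bound on $b_R$:
\begin{itemize}
\item The paper integrates the decay estimate \eqref{eq:gprime-upper-small-R} for $g_R'$ from $2R$, together with $g_R(2R)<C_3R$. This gives a rate ($b_R=O(R)$ for $n>2$, $O(R\log\frac1R)$ for $n=2$) at the cost of a case split.
\item You only use that $g_R'$ decreases on $(R,\bar R)$. This follows by propagating $g_R''<0$ backward from $g_R''(\bar R)=-\frac12g_R(\bar R)<0$ via Lemma~\ref{lem:concavity-propagation}. It gives $b_R\le g_R(r_0)+\bar R$.
\end{itemize}
Your version is shorter and uniform in $n$, but gives no rate beyond that of $\bar R$. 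Its inputs are immediate. From $\sin\varphi_R(r)<e^{\frac14(r^2-R^2)}(R/r)^{n-1}$ in the proof of Lemma~\ref{lem:small-R-zero-gprime}, $\varphi_R$ drops below $\frac\pi4$ before $r=2R$ for small $R$. Then $g_R(r_0)=O(R)$ as in \eqref{eq:g-upper-2R-small-R}.

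You should delete the framing around an ``$O(R)$ region.'' The claims that $\Gamma_R$ turns through $\frac\pi2$ within a region of size $O(R)$, and that Step 2 should bound $u_R(b_R)$ by $O(R)$, are false, not merely hard.

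On $(R,\bar R)$ one has $\frac{d}{dr}\bigl(r^{n-1}\sin\varphi_R\bigr)=\frac12r^{n}\sin\varphi_R-\frac12r^{n-1}g_R\cos\varphi_R\ge-\frac12r^{n-1}g_R(\bar R)$. Integrating from $R$, where $\sin\varphi_R=1$, to $\bar R$, where $\sin\varphi_R=0$, gives $(\bar R/R)^n\ge1+\frac{2n}{R\,g_R(\bar R)}\to\infty$.

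Equivalently, $u_R(Rx)/R$ converges to the solution of $\frac{U''}{1+U'^2}=\frac{n-1}{U}$, $U(0)=1$, $U'(0)=0$. Its first integral $U^{n-1}\cos\theta=1$ shows it never becomes vertical at finite height.

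Consequences for your outline:
\begin{itemize}
\item The proviso $u_R\le MR$ in your comparison with $v''=c(1+v'^2)/R$ necessarily fails.
\item The vertical point is produced at heights $\gg R$ by the $-\frac g2$ term, which is precisely what Lemma~\ref{lem:small-R-zero-gprime} exploits.
\item Only $\bar R\to0$ is true, and that is all your Step 3 uses.
\end{itemize}
Keep from Step 1 only the $O(R)$ bound on the $x$-extent up to height $r_0$.
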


\begin{proof}

For $r<\sqrt{2(n-1)}$, integrating \eqref{eq:gprime-upper-small-R} yields
\begin{equation}\label{eq:g-growth-small-R}
	\begin{aligned}
		g(r)-g(2R)
		&\leq
		\int_{2R}^{r}
		e^{\frac{1}{4}(t^2-R^2)}
		\left(\frac{R}{t}\right)^{n-1}\,dt\\
		&<
		\frac{1}{\cos\varphi_R(2R)}
		R^{n-1}
		e^{\frac{1}{4}(2(n-1)-R^2)}
		\int_{2R}^{r}\frac{1}{t^{n-1}}\,dt.
	\end{aligned}
\end{equation}
		When $n>2$, the integral term above is
\begin{equation*}
	\frac{1}{\cos\varphi_R(2R)}
	R^{n-1}
	e^{\frac{1}{4}(2(n-1)-R^2)}
	\frac{1}{n-2}
	\left[
	R-\left(\frac{R}{r}\right)^{n-1}r
	\right].
\end{equation*}
When $n=2$, the integral term above is
\begin{equation*}
	\frac{1}{\cos\varphi_R(2R)}
	R^{n-1}
	e^{\frac{1}{4}(2(n-1)-R^2)}
	\left[
	R\ln\left(\frac{r}{R}\right)
	\right].
\end{equation*}
Combining this with \eqref{eq:g-upper-2R-small-R} and \eqref{eq:g-growth-small-R}, we obtain that for $r<\sqrt{2(n-1)}$, if $R\to0$, then $g_R(r)\to0$. Moreover, Lemma~\ref{lem:small-R-zero-gprime} proves that, as $R\to0$, $r_R(b_R)\to0$. This proves the lemma.
\end{proof}

\section{Comparison Theorems}

\subsection{Sturm Comparison Theorem}

The Sturm comparison theorem used here \cite{ode} is a contrapositive form of the standard Sturm comparison theorem. In the following lemma, we give proofs under two different assumptions. The first assumption can be regarded as a special case of the second one. Since the first proof does not require an integral representation, it is simpler and is sufficient for proving all the results in this section; however, both its assumptions and conclusion are more restrictive. The second proof shows that the stated condition is only a sufficient condition. This proof is inspired by the integral method in \cite{Kleene2010SelfshrinkersWA}. Other proofs of the Sturm comparison theorem can be found in \cite{2010Singular,ode}.

\begin{lemma}\label{lem:sturm-comparison}
Let $p,\tilde q,\bar q$ be continuous functions, and consider the following two linear homogeneous ordinary differential equations:
\begin{equation}\label{eq:sturm-tilde}
	\tilde h''+p\tilde h'+\tilde q\tilde h=0,
\end{equation}
and
\begin{equation}\label{eq:sturm-bar}
	\bar h''+p\bar h'+\bar q\bar h=0.
\end{equation}
Suppose that a solution $\bar h$ of \eqref{eq:sturm-bar} satisfies $\bar h>0$ on $[r_0,r_1)$, and that a solution $\tilde h$ of \eqref{eq:sturm-tilde} satisfies $\tilde h(r_0)>0$. If one of the following two conditions holds, then $\tilde h>0$ on $(r_0,r_1)$.

\begin{enumerate}[(i)]
	\item $\left(\frac{\tilde h}{\bar h}\right)'(r_0)\geq0$, and $\tilde q<\bar q$ on $(r_0,r_1)$.
	
	\item $\tilde q\leq \bar q$ on $(r_0,r_1)$, and
	\begin{equation}\label{eq:sturm-initial-wronskian}
		\tilde h'(r_0)\bar h(r_0)-\tilde h(r_0)\bar h'(r_0)\geq0.
	\end{equation}
\end{enumerate}
\end{lemma}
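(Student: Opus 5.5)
Both conditions lead to the same argument. We study the quotient $w=\tilde h/\bar h$, which is well defined on $[r_0,r_1)$ because $\bar h>0$ there. We also use the Wronskian-type quantity
\[
W=\tilde h'\bar h-\tilde h\bar h'=\bar h^{2}w'.
\]
Differentiating and substituting \eqref{eq:sturm-tilde} and \eqref{eq:sturm-bar} gives
\[
W'=\tilde h''\bar h-\tilde h\bar h''=-pW+(\bar q-\tilde q)\tilde h\bar h .
\]
With the integrating factor $P(r)=\exp\bigl(\int_{r_0}^{r}p\bigr)$, this becomes
\[
(PW)'=P(\bar q-\tilde q)\tilde h\bar h .
\]
The whole proof rests on this identity, which is the integral representation mentioned before the lemma.

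\textbf{Condition (ii).} Argue by contradiction. Suppose $\tilde h$ vanishes somewhere in $(r_0,r_1)$, and let $r_2$ be its first zero, so that $\tilde h>0$ on $[r_0,r_2)$. On $(r_0,r_2)$ the right-hand side of the identity is nonnegative, because $\tilde q\le\bar q$, $\tilde h>0$ and $\bar h>0$. Hence $PW$ is nondecreasing there, and by \eqref{eq:sturm-initial-wronskian}, $P(r)W(r)\ge W(r_0)\ge0$ for $r\in[r_0,r_2]$. It follows that $w'=W/\bar h^2\ge0$ on $[r_0,r_2]$. Therefore $w(r_2)\ge w(r_0)=\tilde h(r_0)/\bar h(r_0)>0$. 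But $\tilde h(r_2)=0$ and $\bar h(r_2)>0$ give $w(r_2)=0$, a contradiction. So $\tilde h>0$ on $(r_0,r_1)$.

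\textbf{Condition (i).} Here $(\tilde h/\bar h)'(r_0)\ge0$ is exactly $W(r_0)\ge0$, and $\tilde q<\bar q$ implies $\tilde q\le\bar q$. So (i) is a special case of (ii) and follows from the argument above. If a proof avoiding the integral identity is wanted, use a first-touching argument instead. Fix a small $\varepsilon>0$ and let $r_2$ be the first point where $w$ drops to the value $\varepsilon$. This gives $w'(r_2)\le 0$, and also $w'$ cannot have stayed nonnegative all the way. Next compute
\[
w''+\Bigl(p+\tfrac{2\bar h'}{\bar h}\Bigr)w'+(\tilde q-\bar q)w=0 .
\]
At any point where $w'=0$ and $w>0$, this gives $w''=(\bar q-\tilde q)w>0$. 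Hence $w'$ cannot cross from nonnegative to negative, so $w'\ge0$ persists from $r_0$. This contradicts the existence of $r_2$.

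\textbf{Main difficulty.} In the direct argument for (i), the delicate point is the endpoint case $w'(r_0)=0$. It is handled by the same computation, which gives $w''(r_0)>0$. The integral argument for (ii) avoids the issue entirely, so that is the route I would present as the main proof.
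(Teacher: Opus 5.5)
Your proof is correct and is essentially the paper's proof. Integrating your identity $(PW)'=P(\bar q-\tilde q)\tilde h\bar h$ and then $w'=W/\bar h^{2}$ gives exactly the variation-of-constants formula \eqref{eq:sturm-integral-representation}, reached without constructing $\bar h_2$. The first-zero contradiction for (ii) is the paper's, reducing (i) to (ii) is legitimate (the paper notes this too), and your optional direct argument for (i) is the paper's quotient-equation argument.

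One small slip in that optional argument: $\tilde q<\bar q$ is assumed only on the open interval $(r_0,r_1)$, so you get only $w''(r_0)\ge0$, not $w''(r_0)>0$. The endpoint case $w'(r_0)=0$ should instead be handled by your own identity: $(PW)'>0$ just to the right of $r_0$ forces $W>0$ there. Your main route through (ii) makes this unnecessary in any case.
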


\begin{proof}
We first prove the assertion under condition (i). Set $h=\frac{\tilde h}{\bar h}$. Then $\tilde h'=\bar h'h+\bar h h'$ and $\tilde h''=\bar h''h+2\bar h'h'+\bar h h''$. Substituting these identities into \eqref{eq:sturm-tilde}, and using \eqref{eq:sturm-bar}, we obtain
\begin{equation}\label{eq:sturm-ratio-equation}
	h''
	+
	\left(
	\frac{2\bar h'}{\bar h}+p
	\right)h'
	+
	(\tilde q-\bar q)h
	=0.
\end{equation}
By assumption, $h(r_0)>0$ and $h'(r_0)\geq0$. Suppose, by contradiction, that there exists $r_2\in(r_0,r_1)$ such that $\tilde h(r_2)=0$. Since $\bar h>0$, this is equivalent to $h(r_2)=0$. Hence $h$ must attain a positive local maximum at some point in $(r_0,r_2)$. At such a point, we have $h>0$, $h'=0$, and $h''\leq0$. However, \eqref{eq:sturm-ratio-equation} gives $h''=(\bar q-\tilde q)h>0$, which is a contradiction. Therefore $\tilde h>0$ on $(r_0,r_1)$.

We now prove the assertion under condition (ii). Rewrite \eqref{eq:sturm-tilde} as the following nonhomogeneous linear equation:
\begin{equation}\label{eq:sturm-nonhomogeneous}
	\tilde h''+p\tilde h'+\bar q\tilde h
	=
	(\bar q-\tilde q)\tilde h.
\end{equation}
The corresponding homogeneous equation is
\begin{equation}\label{eq:sturm-homogeneous}
	y''+py'+\bar q y=0.
\end{equation}
Clearly, $\bar h$ is a solution of \eqref{eq:sturm-homogeneous}. A second linearly independent solution of \eqref{eq:sturm-homogeneous} is given by
\begin{equation*}
	\bar h_2(r)
	=
	\bar h(r)
	\int_{r_0}^{r}
	\frac{1}{\bar h^2(t)}
	e^{-\int_{r_0}^{t}p(z)\,dz}\,dt.
\end{equation*}
For convenience, set $\bar h_1=\bar h$ and $Q=(\bar q-\tilde q)\tilde h$. The Wronskian of $\bar h_1$ and $\bar h_2$ is
\[
W
=
\bar h_1\bar h_2'-\bar h_1'\bar h_2
=
e^{-\int_{r_0}^{r}p(z)\,dz}.
\]
By variation of constants, a particular solution of \eqref{eq:sturm-nonhomogeneous} is
\begin{equation*}
	h^*
	=
	\bar h_2
	\int_{r_0}^{r}
	\frac{\bar h_1 Q}{W}\,dt
	-
	\bar h_1
	\int_{r_0}^{r}
	\frac{\bar h_2 Q}{W}\,dt.
\end{equation*}
Substituting the expressions for $\bar h_2$ and $W$, and integrating by parts, we obtain
\begin{align*}
	h^*
	&=
	\bar h
	\int_{r_0}^{r}
	\frac{1}{\bar h^2(t)}
	e^{-\int_{r_0}^{t}p(z)\,dz}
	\left[
	\int_{r_0}^{t}
	e^{\int_{r_0}^{s}p(z)\,dz}
	(\bar q-\tilde q)(s)\bar h(s)\tilde h(s)\,ds
	\right]dt.
\end{align*}
Therefore, the solution $\tilde h$ of \eqref{eq:sturm-tilde} can be represented as
\begin{equation*}
	\begin{aligned}
		\tilde h(r)
		&=
		c_1\bar h(r)
		+
		c_2\bar h(r)
		\int_{r_0}^{r}
		\frac{1}{\bar h^2(t)}
		e^{-\int_{r_0}^{t}p(z)\,dz}\,dt\\
		&\quad+
		\bar h(r)
		\int_{r_0}^{r}
		\frac{1}{\bar h^2(t)}
		e^{-\int_{r_0}^{t}p(z)\,dz}
		\left[
		\int_{r_0}^{t}
		e^{\int_{r_0}^{s}p(z)\,dz}
		(\bar q-\tilde q)(s)\bar h(s)\tilde h(s)\,ds
		\right]dt.
	\end{aligned}
\end{equation*}
Using the initial values at $r_0$, we find $c_1=\frac{\tilde h(r_0)}{\bar h(r_0)}$ and
\[
c_2=
\tilde h'(r_0)\bar h(r_0)
-
\tilde h(r_0)\bar h'(r_0).
\]
Thus
\begin{equation}\label{eq:sturm-integral-representation}
	\begin{aligned}
		\tilde h(r)
		&=
		\frac{\tilde h(r_0)}{\bar h(r_0)}\bar h(r)\\
		&\quad+
		\left[
		\tilde h'(r_0)\bar h(r_0)
		-
		\tilde h(r_0)\bar h'(r_0)
		\right]
		\bar h(r)
		\int_{r_0}^{r}
		\frac{1}{\bar h^2(t)}
		e^{-\int_{r_0}^{t}p(z)\,dz}\,dt\\
		&\quad+
		\bar h(r)
		\int_{r_0}^{r}
		\frac{1}{\bar h^2(t)}
		e^{-\int_{r_0}^{t}p(z)\,dz}
		\left[
		\int_{r_0}^{t}
		e^{\int_{r_0}^{s}p(z)\,dz}
		(\bar q-\tilde q)(s)\bar h(s)\tilde h(s)\,ds
		\right]dt.
	\end{aligned}
\end{equation}

The first term in \eqref{eq:sturm-integral-representation} is positive, and the second term is nonnegative by \eqref{eq:sturm-initial-wronskian}. Suppose, by contradiction, that $\tilde h$ has a zero in $(r_0,r_1)$, and let $\bar r$ be its first zero. Then $\tilde h>0$ on $(r_0,\bar r)$. Since $\bar q-\tilde q\geq0$ and $\bar h>0$, the third term in \eqref{eq:sturm-integral-representation} is nonnegative for $r=\bar r$. Therefore, $\tilde h(\bar r)>0$, which contradicts $\tilde h(\bar r)=0$. Hence $\tilde h>0$ on $(r_0,r_1)$.
\end{proof}

\subsection{The Linearized Equation for Rotationally Symmetric Self-Shrinkers}

We now study the linearized equation for the angle $\varphi=\arctan g'$ of a solution to \eqref{eq:g-equation}. For a solution satisfying $g(a)=0$, equation \eqref{eq:g-equation} can be equivalently written as
\begin{equation}\label{eq:angle-equation}
\frac{d\varphi}{dr}
=
\left(\frac{r}{2}-\frac{n-1}{r}\right)\tan\varphi
-\frac{1}{2}\int_{a}^{r}\tan\varphi(s)\,ds .
\end{equation}
This integral form will be convenient for deriving the linearized equation below.

Let $0<C<+\infty$. For each initial point $a$, consider the solution of \eqref{eq:g-equation} satisfying $g(a)=0$ and $g'(a)=C$. Equivalently, we consider the solution of \eqref{eq:angle-equation} satisfying
\begin{equation*}
\begin{cases}
	\varphi(a)=\arctan C,\\
	\varphi'(a)=\left(\dfrac{a}{2}-\dfrac{n-1}{a}\right)C.
\end{cases}
\end{equation*}
Here the value of $\varphi'(a)$ follows directly from \eqref{eq:angle-equation}, since the integral term vanishes at $r=a$. We denote this solution by $\varphi_{a,C}$, and the corresponding solution of \eqref{eq:g-equation} by $g_{a,C}$.

\begin{defn}
The \textbf{linearized function} at $r_0$ is defined by
\begin{equation*}
	h_{r_0,C}(r)
	=
	\lim_{\epsilon\to0}
	\frac{\varphi_{r_0+\epsilon,C}(r)-\varphi_{r_0,C}(r)}{\epsilon}
	=
	\frac{\partial}{\partial a}\bigg|_{a=r_0}\varphi_{a,C}(r).
\end{equation*}

For the case $C=+\infty$, namely for solutions satisfying $g'(a)=+\infty$, one may define the corresponding linearized function $h_{r_0,+\infty}$ by taking the limit as $C\to+\infty$, whenever this limit exists.

In what follows, when we refer to a solution $g$, we only consider the portion on which $g'\geq0$ and $g\geq0$. Similarly, for the linearized function $h$ at $r_0$, we only consider its restriction to these intervals. When there is no danger of confusion, we shall omit one or both subscripts in $g_{r_0,C}$, $h_{r_0,C}$, $g_{r_0,+\infty}$, and $h_{r_0,+\infty}$.
\end{defn}

We now derive the equation satisfied by $h$. Differentiating \eqref{eq:angle-equation} with respect to the initial point $a$, and then setting $a=r_0$, gives
\begin{equation*}
\begin{aligned}
	h'
	&=
	\left(\frac{r}{2}-\frac{n-1}{r}\right)\frac{1}{\cos^2\varphi}h
	+\frac{1}{2}\tan\varphi(r_0)
	-\frac{1}{2}\int_{r_0}^{r}\frac{1}{\cos^2\varphi}h  \\
	&=
	\left(\frac{r}{2}-\frac{n-1}{r}\right)(1+(g')^2)h
	+\frac{C}{2}
	-\frac{1}{2}\int_{r_0}^{r}(1+(g')^2)h .
\end{aligned}
\end{equation*}
Here the term $\frac{C}{2}$ comes from differentiating the lower limit of the integral in \eqref{eq:angle-equation} with respect to the initial point $a$.

Differentiating the above equation with respect to $r$ and simplifying, we obtain the following second-order linear ordinary differential equation for $h$, namely the linearized equation associated with \eqref{eq:angle-equation}:
\begin{equation}\label{eq:linearized-equation}
\begin{aligned}
	h''-
	&\left(\frac{r}{2}-\frac{n-1}{r}\right)(1+(g')^{2})h' \\
	&+
	\left[
	-2\left(\frac{r}{2}-\frac{n-1}{r}\right)g'g''
	-\frac{n-1}{r^{2}}(1+(g')^{2})
	\right]h=0.
\end{aligned}
\end{equation}

We next compute the initial conditions satisfied by $h$. For $0<C<+\infty$, by the analytic theory of ordinary differential equations, the solution of \eqref{eq:g-equation} satisfying $g(r_0)=0$ and $g'(r_0)=C$ admits a convergent power series expansion. Hence Taylor expansions may be used to compute the initial values of $h$.

For $a$ close to $r_0$, we have
\begin{equation*}
\varphi_{a,C}(r)
=
\arctan\left[
g'_{a,C}(a)
+
g''_{a,C}(a)(r-a)
+
O((r-a)^2)
\right].
\end{equation*}
In particular, since $g'_{a,C}(a)=C$, we obtain
\begin{equation*}
\varphi_{r_0+\epsilon,C}(r_0)
=
\arctan\left[
C-\epsilon\, g''_{r_0+\epsilon,C}(r_0+\epsilon)
+
O(\epsilon^2)
\right].
\end{equation*}
Therefore
\begin{equation}\label{eq:h-initial-value}
\begin{aligned}
	h(r_0)
	&=
	\lim_{\epsilon\to0}
	\frac{
		\varphi_{r_0+\epsilon,C}(r_0)
		-
		\varphi_{r_0,C}(r_0)
	}{\epsilon} \\
	&=
	-\frac{1}{1+C^2}g''_{r_0,C}(r_0) \\
	&=
	-\left(\frac{r_0}{2}-\frac{n-1}{r_0}\right)C \\
	&=
	\frac{1}{2}g_{r_0,C}(r_0)
	-
	\left(\frac{r_0}{2}-\frac{n-1}{r_0}\right)g'_{r_0,C}(r_0).
\end{aligned}
\end{equation}
Here we used $g_{r_0,C}(r_0)=0$ and
\[
g''_{r_0,C}(r_0)
=
(1+C^2)
\left(\frac{r_0}{2}-\frac{n-1}{r_0}\right)C.
\]

The value of $h'(r_0)$ can be computed either from \eqref{eq:g-equation} and the Taylor expansions of $g$ and $g'$, or directly from the first-order linearized equation above. Since the integral term vanishes at $r=r_0$, we have
\begin{equation}\label{eq:hprime-initial-value}
\begin{aligned}
	h'(r_0)
	&=
	\left(\frac{r_0}{2}-\frac{n-1}{r_0}\right)(1+C^2)h(r_0)
	+\frac{C}{2} \\
	&=
	\frac{1}{2}C
	-
	(1+C^2)C
	\left(\frac{r_0}{2}-\frac{n-1}{r_0}\right)^2.
\end{aligned}
\end{equation}

For the case $g'(r_0)=+\infty$, the initial values of $h$ cannot be computed directly in the above way. In the next lemma, we will instead obtain the corresponding conclusion by taking the limit $C\to+\infty$.

Notice that, by the definition of the linearized function, if $h>0$ is proved, then for all sufficiently small positive $\epsilon$, the angle functions satisfy $\varphi_{r_0,C}<\varphi_{r_0+\epsilon,C}$. We can now obtain a sufficient condition for $h>0$ from the Sturm comparison theorem. When there is no danger of confusion, we shall omit one or both subscripts of $\eta_{r_0,C}$ or $\eta_{r_0,+\infty}$ in the following lemma.

\begin{lemma}\label{lem:eta-comparison}
For a solution with initial condition $g(r_0)=0$ and $g'(r_0)=C$ or $g'(r_0)=+\infty$, define
\begin{equation}\label{eq:eta-definition}
	\eta_{r_0}=(1+(g'_{r_0})^{2})(\frac{n-1}{r^{2}}+\frac{1}{2})+2[(\frac{r}{2}-\frac{n-1}{r})-(\frac{r_{0}}{2}-\frac{n-1}{r_{0}})]g'_{r_0}g''_{r_0}.
\end{equation}

({\romannumeral1}): For $r_{0}\in(-\infty,-\sqrt{2(n-1)})$, or $r_{0}\in (0,\sqrt{2(n-1)})$, suppose that $g_{r_0,C}$ is the solution of \eqref{eq:g-equation} satisfying $g(r_{0})=0$, $g'(r_{0})=C>0$. If $\eta_{r_0,C}\geq0$ on the portion where $g\geq0$ and $g'\geq0$, then the corresponding solution $h_{r_0,C}$ of the linearized equation \eqref{eq:linearized-equation} satisfies $h_{r_0,C}>0$.

({\romannumeral2}): When $g(r_{0})=0$, $g'(r_{0})=+\infty$, if the solution $g_{r_0,+\infty}$ satisfies $\eta_{r_0,+\infty}>0$ on the portion where $g\geq0$ and $g'\geq0$, then the corresponding solution of the linearized equation \eqref{eq:linearized-equation} satisfies $h_{r_0,+\infty}>0$.

({\romannumeral3}): If, for arbitrarily large $C$, the corresponding solution $g_{r_0,C}$ satisfies $\eta_{r_0,C}\geq0$ on the portion where $g\geq0$ and $g'\geq0$, then $h_{r_0,+\infty}>0$.
\end{lemma}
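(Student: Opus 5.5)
The plan is to apply Lemma~\ref{lem:sturm-comparison} with $\tilde h=h_{r_0,C}$ and with $\tilde q$ the zero-order coefficient of \eqref{eq:linearized-equation}. Write $A(r)=\frac r2-\frac{n-1}{r}$ and $A_0=A(r_0)$. Then $p=-A(1+(g')^2)$ and $\tilde q=-2Ag'g''-\frac{n-1}{r^2}(1+(g')^2)$. Comparing with \eqref{eq:eta-definition} gives the identity
\[
\tilde q+\eta=\tfrac12(1+(g')^2)-2A_0\,g'g''=:\bar q .
\]
So the hypothesis $\eta\ge0$ is exactly the inequality $\tilde q\le\bar q$ required in condition (ii) of Lemma~\ref{lem:sturm-comparison}. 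This is presumably the reason $\eta$ was introduced.

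\textbf{Comparison function.} The first task is to produce a positive solution $\bar h$ of $\bar h''+p\bar h'+\bar q\bar h=0$ on the branch where $g\ge0$ and $g'\ge0$. I would look for $\bar h$ built from the profile curve itself, trying an ansatz $\bar h=\alpha+\beta g+\gamma g'$ or $\bar h=\frac{\alpha+\beta g'}{1+(g')^2}$. The motivation is that the initial value \eqref{eq:h-initial-value} has the form $\frac12 g-A_0g'$, and $\bar q$ involves only $A_0$ and $g'g''$. I would substitute the ansatz, eliminate $g''$ and $g'''$ using \eqref{eq:g-equation} and \eqref{eq:g-third-derivative}, and match coefficients.

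A fallback that avoids an explicit $\bar h$ is available. Rewrite \eqref{eq:linearized-equation} in the integrated first-order form
\[
h'=A(1+(g')^2)h+\tfrac C2-\tfrac12\int_{r_0}^r(1+(g')^2)h,
\]
and argue at the first zero of $h$, following the integral method of Kleene--M{\o}ller used in part (ii) of the Sturm proof. I expect finding and verifying the positivity of $\bar h$ (or making this fallback work) to be the main obstacle.

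\textbf{Initial data and part (i).} Once $\bar h$ is in hand, I check the starting conditions using \eqref{eq:h-initial-value} and \eqref{eq:hprime-initial-value}. For $r_0\in(0,\sqrt{2(n-1)})$ we have $A_0<0$, so $h(r_0)=-A_0C>0$; for $r_0<-\sqrt{2(n-1)}$ again $A_0<0$ and $h(r_0)>0$. This sign of $A_0$ is exactly why the statement restricts $r_0$ to these two ranges. I then need to verify the Wronskian inequality \eqref{eq:sturm-initial-wronskian}, namely $h'(r_0)\bar h(r_0)-h(r_0)\bar h'(r_0)\ge0$, by direct substitution of these explicit values. With that, Lemma~\ref{lem:sturm-comparison}(ii) gives $h_{r_0,C}>0$ on the portion where $g\ge0$ and $g'\ge0$, which is part (i).

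\textbf{Parts (ii) and (iii).} These follow by letting $C\to+\infty$. By continuous dependence on initial data (in the arclength formulation \eqref{eq:arclength-system}, where $g'=+\infty$ is regular), $g_{r_0,C}\to g_{r_0,+\infty}$ and $\eta_{r_0,C}\to\eta_{r_0,+\infty}$ locally uniformly away from $r_0$.

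For (ii), strict positivity of $\eta_{r_0,+\infty}$ on the relevant compact portion gives $\eta_{r_0,C}\ge0$ for all large $C$. Part (i) then yields $h_{r_0,C}>0$, and so the limit $h_{r_0,+\infty}\ge0$.

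For (iii), the same conclusion $h_{r_0,+\infty}\ge0$ comes directly from the hypothesis. In both cases, strictness is upgraded by the integral representation \eqref{eq:sturm-integral-representation}. After normalizing $h$ by a suitable power of $C$, its first term has a positive limit, so the limit function cannot vanish.
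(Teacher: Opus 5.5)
\textbf{Gap in part (i).} Your identity $\tilde q+\eta=\tfrac12(1+(g')^2)-2A_0g'g''$ is correct. The trouble is declaring it to be $\bar q$: that commits you to a positive solution of $y''+py'+(q+\eta)y=0$ on the whole branch. You never produce one, and enlarging the zero-order coefficient only makes positive solutions harder to find.

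The fallback stalls as well. At a first zero $r_1$ of $h$, the integrated form gives $h'(r_1)=\frac C2-\frac12\int_{r_0}^{r_1}(1+(g')^2)h$, which has no definite sign, and $\eta$ never enters.

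The missing idea is to reverse the order: fix the comparison function first and let it define $\bar q$. Take $\bar h=\frac12 g-A_0g'$, the expression you yourself spotted in \eqref{eq:h-initial-value}.
\begin{itemize}
\item It is positive on the branch, since $A_0<0$, $g,g'\ge0$, and $g,g'$ do not vanish together.
\item By \eqref{eq:h-initial-value}--\eqref{eq:hprime-initial-value} it has the same Cauchy data as $h$ at $r_0$, so the Wronskian in \eqref{eq:sturm-initial-wronskian} is zero.
\item Using \eqref{eq:g-equation} and \eqref{eq:g-third-derivative} one finds
\[
\bar h''+p\bar h'+q\bar h=-\tfrac12\,g\,\eta .
\]
\end{itemize}
So $\bar h$ solves the comparison equation with $\bar q:=q+\frac{g}{2\bar h}\eta$, not with $q+\eta$. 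Indeed $\bar h''+p\bar h'+(q+\eta)\bar h=-A_0g'\eta\neq0$, so matching this natural member of your ansatz family against your $\bar q$ would fail.

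Since $0\le\frac{g}{2\bar h}\le1$, we get $\bar q-q\ge0$ exactly when $\eta\ge0$, and Lemma~\ref{lem:sturm-comparison}(ii) gives $h>0$. This is the paper's argument.

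It also settles strictness in the limit without any normalization. \eqref{eq:sturm-integral-representation} gives $h_{r_0,C}\ge\bar h_{r_0,C}$ on the branch, and for $r>r_0$ the right side converges to $\frac12g_{r_0,+\infty}-A_0g'_{r_0,+\infty}>0$.

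\textbf{Gap in part (ii).} The convergence $\eta_{r_0,C}\to\eta_{r_0,+\infty}$ is only uniform on compact sets away from $r_0$, and it really fails near $r_0$. On one hand, $\eta_{r_0,C}(r_0)=(1+C^2)\bigl(\frac{n-1}{r_0^2}+\frac12\bigr)\to\infty$. On the other, for the limit curve the two terms of $\eta$ are each of size $(r-r_0)^{-1}$ with opposite signs and cancel at leading order.

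So positivity of $\eta_{r_0,+\infty}$ does not by itself give $\eta_{r_0,C}\ge0$ on a layer $(r_0,r_0+\delta)$ for large $C$. The paper isolates this layer: it computes the one-sided limit of $\eta_{r_0,+\infty}$ at $r_0$ and controls $\eta_{r_0,C}$ there separately, before using uniform convergence on the rest of the branch. Your plan needs some argument of this kind. Part (iii) goes through once (i) is repaired.
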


\begin{proof}
Proof of ({\romannumeral1}): By the initial values \eqref{eq:h-initial-value} and \eqref{eq:hprime-initial-value}, define the auxiliary comparison function
\begin{equation}\label{eq:auxiliary-hbar}
	\bar{h}=\frac{1}{2}g-\left(\frac{r_{0}}{2}-\frac{n-1}{r_{0}}\right)g'.
\end{equation}
Then $\bar{h}(r_{0})=h(r_{0})$ and $\bar{h}'(r_{0})=h'(r_{0})$, and hence
\begin{equation*}
	h'(r_{0})\bar{h}(r_{0})-h(r_{0})\bar{h}'(r_{0})=0.
\end{equation*}
Since $\frac{r_{0}}{2}-\frac{n-1}{r_{0}}<0$ under the assumptions $r_{0}\in(-\infty,-\sqrt{2(n-1)})$ or $r_{0}\in(0,\sqrt{2(n-1)})$, and since we only consider the portion of the solution where $g\geq0$ and $g'\geq0$, we have $\bar{h}>0$.

Next we construct the equation satisfied by $\bar{h}$. Suppose that $\bar{h}$ satisfies an equation of the form
\begin{equation}\label{eq:hbar-equation}
	\bar{h}''-\left(\frac{r}{2}-\frac{n-1}{r}\right)(1+(g')^{2})\bar{h}'+\bar{q}\bar{h}=0.
\end{equation}
	Substituting $\bar{h}$, $\bar{h}'$, and
\begin{equation*}
	\begin{aligned}
		\bar{h}''
		&=\frac{1}{2}g''-\left(\frac{r_{0}}{2}-\frac{n-1}{r_{0}}\right)g'''\\
		&=\frac{1}{2}g''
		-\left(\frac{r_{0}}{2}-\frac{n-1}{r_{0}}\right)
		\bigg\{
		\frac{n-1}{r^{2}}(1+(g')^{2})g'\\
		&\quad+
		\left(\frac{r}{2}-\frac{n-1}{r}\right)(1+(g')^{2})g''
		+2g'g''\left[\left(\frac{r}{2}-\frac{n-1}{r}\right)g'-\frac{g}{2}\right]
		\bigg\},
	\end{aligned}
\end{equation*}
into \eqref{eq:hbar-equation}, we obtain
\begin{equation*}
	\begin{aligned}
		\bar{q}
		&=\frac{1}{\bar{h}}\bigg[
		2\left(\frac{r_{0}}{2}-\frac{n-1}{r_{0}}\right)
		\left(\frac{r}{2}-\frac{n-1}{r}\right)(g')^{2}g''
		+\left(\frac{r_{0}}{2}-\frac{n-1}{r_{0}}\right)\frac{n-1}{r^{2}}(1+(g')^{2})g'\\
		&\quad+\frac{1}{4}(1+(g')^{2})g
		-\left(\frac{r_{0}}{2}-\frac{n-1}{r_{0}}\right)gg'g''
		\bigg].
	\end{aligned}
\end{equation*}

Let $q$ denote the coefficient of the $h$-term in the linearized equation \eqref{eq:linearized-equation}. A direct computation gives
\begin{equation}\label{eq:qbar-minus-q}
	\begin{aligned}
		\bar{q}-q
		&=\bar{q}+2\left(\frac{r}{2}-\frac{n-1}{r}\right)g'g''
		+\frac{n-1}{r^{2}}(1+(g')^{2})\\
		&=\frac{1}{2\bar{h}}g\bigg[
		(1+(g')^{2})\left(\frac{n-1}{r^{2}}+\frac{1}{2}\right)\\
		&\quad+
		2\left[
		\left(\frac{r}{2}-\frac{n-1}{r}\right)
		-\left(\frac{r_{0}}{2}-\frac{n-1}{r_{0}}\right)
		\right]g'g''
		\bigg]\\
		&=\frac{g}{2\bar{h}}\eta.
	\end{aligned}
\end{equation}
When $\bar{q}-q\geq0$, equivalently when $\eta\geq0$, the Sturm comparison theorem, Lemma~\ref{lem:sturm-comparison}, implies that $h>0$.

Proof of ({\romannumeral2}):

$\mathbf{Step\ 1:}$ We first compute the limit of $\eta_{r_0,+\infty}$ at $r_0$ in the case $C=+\infty$. For simplicity of notation, in the following computation we write $g_{r_0,+\infty}$ simply as $g$.
\begin{equation}\label{eq:eta-limit-r0-computation}
	\begin{aligned}
		\lim_{r\to r^{+}_{0}} \eta_{r_0,+\infty}
		=&\lim_{r\to r^{+}_{0}}\frac{\frac{1+(g')^{2}}{g'g''}(\frac{n-1}{r^{2}}+\frac{1}{2})+ 2[(\frac{r}{2}-\frac{n-1}{r})-(\frac{r_{0}}{2}-\frac{n-1}{r_{0}})]}{\frac{1}{g'g''}}\\
		=&\lim_{r\to r^{+}_{0}}\frac{\frac{d}{dr}\{\frac{1+(g')^{2}}{g'g''}(\frac{n-1}{r^{2}}+\frac{1}{2})+ 2[(\frac{r}{2}-\frac{n-1}{r})-(\frac{r_{0}}{2}-\frac{n-1}{r_{0}})]\}}{\frac{d}{dr}\{\frac{1}{g'g''}\}}\\
		=&\lim_{r\to r^{+}_{0}} \frac{2(n-1)}{r^3}\frac{g'g''(1+(g')^2)}{g''g''+g'g'''}-4(\frac{n-1}{r^{2}}+\frac{1}{2})\frac{(g')^2(g'')^2}{g''g''+g'g'''}\\
		&+(\frac{n-1}{r^{2}}+\frac{1}{2})(1+(g')^{2})\\
		=&\lim_{r\to r^{+}_{0}} \frac{2(n-1)}{r^3}\frac{g'g''(1+(g')^2)}{g''g''+g'g'''}+\frac{-3(g')^2+1+\frac{g'g'''}{g''g''}+\frac{(g')^3g'''}{g''g''}}{1+\frac{g'g'''}{g''g''}}.
	\end{aligned}
\end{equation}
We compute the above limit in parts. First,
\begin{equation}\label{eq:eta-limit-part1}
	\begin{aligned}
		&\lim_{r\to r^{+}_{0}} -3(g')^2+1+\frac{g'g'''}{g''g''}+\frac{(g')^3g'''}{g''g''}\\
		=&2+(\frac{n-1}{r_0^2}-\frac{1}{2})\frac{1}{(\frac{r_0}{2}-\frac{n-1}{r_0})^2},
	\end{aligned}
\end{equation}
where we have used the limits
$$\lim_{r\to r^{+}_{0}} gg'=\lim_{r\to r^{+}_{0}} \frac{\frac{d}{dr}g}{\frac{d}{dr}\frac{1}{g'}}=\lim_{r\to r^{+}_{0}} \frac{-(g')^3}{g''}=-\frac{1}{\frac{r_0}{2}-\frac{n-1}{r_0}},$$
and
$$\lim_{r\to r^{+}_{0}} \frac{(g')^3g'''}{g''g''}=3.$$
Moreover, a direct computation gives
\begin{equation}\label{eq:eta-limit-part2}
	\lim_{r\to r^{+}_{0}} \frac{2(n-1)}{r^3}\frac{g'g''(1+(g')^2)}{g''g''+g'g'''}=\frac{n-1}{2r_0^3}\frac{1}{\frac{r_0}{2}-\frac{n-1}{r_0}}.
\end{equation}
Substituting \eqref{eq:eta-limit-part1} and \eqref{eq:eta-limit-part2} into \eqref{eq:eta-limit-r0-computation}, we obtain
\begin{equation}\label{eq:eta-limit-r0-positive}
	\begin{aligned}
		\lim_{r\to r^{+}_{0}} \eta_{r_0,+\infty}
		=&\frac{1}{4}[2+(\frac{n-1}{r_0^2}-\frac{1}{2})\frac{1}{(\frac{r_0}{2}-\frac{n-1}{r_0})^2}](\frac{n-1}{r_0^{2}}+\frac{1}{2})+\\
		&\frac{n-1}{2r_0^3}\frac{1}{\frac{r_0}{2}-\frac{n-1}{r_0}}\\
		&=\frac{1}{4r_0^2}[2n-3+r_0^2]>0.
	\end{aligned}
\end{equation}

$\mathbf{Step\ 2:}$ By direct computation, we have
\begin{equation}
	\begin{aligned}
		\frac{d}{dr}\eta_{r_0,C}
		=&2(\frac{n-1}{r^2}+\frac{1}{2})g'g''-\frac{2(n-1)}{r^3}(1+(g')^2)\\
		=&g'g''[2(\frac{n-1}{r^2}+\frac{1}{2})-\frac{2(n-1)}{r^3}\frac{1+(g')^2}{g'g''}]\\
		=&g'g''\big\{2(\frac{n-1}{r^2}+\frac{1}{2})-\frac{2(n-1)}{r^3}\frac{1}{g'[(\frac{r}{2}-\frac{n-1}{r})g'-\frac{g}{2}]}\big\}.
	\end{aligned}
\end{equation}
By the continuous dependence of solutions to the system \eqref{eq:arclength-system} on initial data, when $C$ is sufficiently large, there exists a constant $\delta_1>0$ such that on the interval $(r_0,r_0+\delta_1)$,
$$2(\frac{n-1}{r^2}+\frac{1}{2})-\frac{2(n-1)}{r^3}\frac{1}{g'[(\frac{r}{2}-\frac{n-1}{r})g'-\frac{g}{2}]}>0.$$
Therefore, for sufficiently large $C$, we have $\frac{d}{dr}\eta_{r_0,C}<0$ on the interval $(r_0,r_0+\delta_1)$.

Since the limit \eqref{eq:eta-limit-r0-positive} exists, there exists a constant $\delta_2>0$ such that $\eta_{r_0,+\infty}>0$ on the interval $(r_0,r_0+\delta_2)$. Let $$\bar{\delta}=\min\{\delta_1,\delta_2\}.$$ Since
$$\lim_{C\to+\infty}\eta_{r_0,C}(r_0+\frac{\bar{\delta}}{2})=\eta_{r_0,+\infty}(r_0+\frac{\bar{\delta}}{2})>0,$$
it follows that, for sufficiently large $C$, $\eta_{r_0,C}(r_0+\frac{\bar{\delta}}{2})>0$. Moreover, since $\frac{d}{dr}\eta_{r_0,C}<0$ on the interval $[r_0,r_0+\frac{\bar{\delta}}{2})$, we have $\eta_{r_0,C}>0$ on this interval.

On the set difference between the interval where $g'>0$ holds and $[r_0,r_0+\frac{\bar{\delta}}{2})$, by the differentiable dependence of solutions of ordinary differential equations on initial data in bounded regions, $\eta_{r_0,C}(r)$ converges uniformly to $\eta_{r_0,+\infty}(r)>0$. Hence $\eta_{r_0,C}(r)>0$ holds for sufficiently large $C$.

$\mathbf{Step\ 3:}$ We now prove the conclusion in the case $g'(r_0)=+\infty$. Since $\eta_{r_0,C}(r)>0$ for sufficiently large $C$, it follows that $h_{r_0,C}>0$. Moreover, since $h_{r_0,C}\to h_{r_0,+\infty}$, we obtain $h_{r_0,+\infty}>0$.

Proof of ({\romannumeral3}): This case has already been included in $\mathbf{Step\ 3}$ of the proof of ({\romannumeral2}).
\end{proof}

\section{Application of the Comparison Theorem \expandafter{\romannumeral1}: A Comparison Theorem Near the Sphere}

\subsection{The Linearized Equation Near the Sphere}

In this section, we study the behavior of solutions to the rotationally symmetric self-shrinker equation near the spherical solution $x^{2}+r^{2}=2n$. More precisely, we consider the solutions of \eqref{eq:g-equation} satisfying $g(r_{0})=0$ and $g'(r_{0})=+\infty$, that is, $\varphi(r_{0})=\frac{\pi}{2}$, when $r_{0}$ is close to $-\sqrt{2n}$. We compare the angle function $\varphi$ of such a solution with that of the spherical solution $g(r)=\sqrt{2n-r^{2}}$. By applying Lemma~\ref{lem:eta-comparison} from the previous section, we obtain the conclusion in the following lemma. Namely, when $r_{0}<-\sqrt{2n}$, we have $\varphi_{r_0}<\varphi_{-\sqrt{2n}}$, whereas when $r_{0}>-\sqrt{2n}$, we have $\varphi_{r_0}>\varphi_{-\sqrt{2n}}$.

\begin{lemma}\label{lem:sphere-comparison}
Let $\varphi_{r_0}$ denote the angle function of the solution $g_{r_0}$ to \eqref{eq:g-equation} satisfying $g(r_{0})=0$ and $g'(r_{0})=+\infty$. Let $\varphi_{-\sqrt{2n}}$ denote the angle function of the spherical solution $g(r)=\sqrt{2n-r^{2}}$. Then, when $r_{0}>-\sqrt{2n}$ is sufficiently close to $-\sqrt{2n}$, we have $\varphi_{r_{0}}>\varphi_{-\sqrt{2n}}$. When $r_{0}<-\sqrt{2n}$ is sufficiently close to $-\sqrt{2n}$, we have $\varphi_{r_{0}}<\varphi_{-\sqrt{2n}}$.
\end{lemma}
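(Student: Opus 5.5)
Plan: apply Lemma~\ref{lem:eta-comparison}(ii) at the base point $r_0=-\sqrt{2n}$, with the sphere as base solution. That gives $h_{-\sqrt{2n},+\infty}>0$. Since $h$ is the $a$-derivative of $\varphi_{a,+\infty}$ at $a=-\sqrt{2n}$, a first-order Taylor expansion in $a$ then gives both inequalities for $r_0$ close to $-\sqrt{2n}$. For $r_0=-\sqrt{2n}+\epsilon$ we get $\varphi_{r_0}-\varphi_{-\sqrt{2n}}=\epsilon h+o(\epsilon)$, which is positive for small $\epsilon>0$ and negative for small $\epsilon<0$.

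\textbf{Step 1: compute $\eta$ for the sphere.} For $g=\sqrt{2n-r^2}$ on $(-\sqrt{2n},0]$, where $g\ge0$ and $g'\ge0$, we have $g'=-r/g$, $1+(g')^2=2n/g^2$ and $g''=-2n/g^3$, so $g'g''=2nr/g^4$. Note $r_0/2-(n-1)/r_0$ at $r_0=-\sqrt{2n}$ equals $-\sqrt{2n}/2+(n-1)/\sqrt{2n}=-1/\sqrt{2n}$. Substituting into \eqref{eq:eta-definition} gives
\[
\eta=\frac{2n}{g^2}\Big(\frac{n-1}{r^2}+\frac12\Big)+\frac{4nr}{g^4}\Big(\frac r2-\frac{n-1}{r}+\frac{1}{\sqrt{2n}}\Big).
\]
Multiplying by $g^4/(2n)$ and writing $g^2=2n-r^2$, I need
\[
(2n-r^2)\Big(\frac{n-1}{r^2}+\frac12\Big)+r^2-2(n-1)+\frac{2r}{\sqrt{2n}}>0
\]
on $(-\sqrt{2n},0)$. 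This simplifies to $\frac{2n(n-1)}{r^2}+\frac{r^2}{2}-(n-1)+\frac{2r}{\sqrt{2n}}$. Set $t=r^2$, with $|r|\le\sqrt{2n}$ so that $2r/\sqrt{2n}\ge-2\sqrt t/\sqrt{2n}$. It is then enough to show $\frac{2n(n-1)}{t}+\frac t2-(n-1)-\sqrt{2t/n}>0$ for $t\in(0,2n]$. By AM--GM, $\frac{2n(n-1)}{t}+\frac t2\ge 2\sqrt{n(n-1)}$, and $\sqrt{2t/n}\le 2$. So the expression is at least $2\sqrt{n(n-1)}-(n-1)-2$, which needs checking for small $n$. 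If it fails, I will instead do a direct one-variable minimization. At $t=2n$ the value is $(n-1)+n-(n-1)-2=n-2$, which vanishes at $n=2$. So the endpoint is delicate and I expect to use the limit \eqref{eq:eta-limit-r0-positive}, $\frac{1}{4r_0^2}(2n-3+r_0^2)>0$, near $r_0$ instead of the crude bound.

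\textbf{Step 2: check the hypotheses.} The portion with $g\ge0$, $g'\ge0$ is $[-\sqrt{2n},0]$. The endpoint $r=-\sqrt{2n}$ is handled by the limit in Step~1 of the proof of Lemma~\ref{lem:eta-comparison}. At $r\to0^-$ we have $g'\to0$ and the $(n-1)/r^2$ term dominates, so $\eta\to+\infty$. Hence $\eta>0$ on the whole portion, and Lemma~\ref{lem:eta-comparison}(ii) gives $h_{-\sqrt{2n},+\infty}>0$ there.

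\textbf{Step 3: pass from $h>0$ to the strict comparison.} This needs some uniformity, because $h>0$ degenerates at the ends, $r\to r_0$ and $r\to0$, and the domains of $\varphi_{r_0}$ and $\varphi_{-\sqrt{2n}}$ differ slightly. On compact subintervals, smooth dependence on $a$ turns $h\ge c>0$ into the inequality. Near $r=0$, I will use that $g_{r_0}$ has its horizontal point near $0$ and compare directly, possibly via the nondegeneracy from Lemma~\ref{lem:maximum-principle}. Near the starting point, both angles start at $\pi/2$, and the one starting further right is larger on the overlap, by the local expansion of $\varphi$ from the vertical point.

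I expect the main obstacle to be Step~1, namely verifying $\eta>0$ uniformly, especially near $r=-\sqrt{2n}$ where $g\to0$ and the terms blow up and must cancel; the endpoint uniformity in Step~3 is the secondary difficulty.
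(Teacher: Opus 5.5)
\textbf{The gap is in Step~1.} Your reduction is the same as the paper's: apply Lemma~\ref{lem:eta-comparison}(ii) at $r_0=-\sqrt{2n}$, so that everything comes down to showing $\eta>0$ for the sphere. But that verification is the entire content of the proof, and your Step~1 does not achieve it.

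First, the simplification is wrong for $n\ge 3$. Since $(2n-r^2)\bigl(\frac{n-1}{r^2}+\frac12\bigr)=\frac{2n(n-1)}{r^2}+1-\frac{r^2}{2}$, the reduced quantity is
\[
F(r)=\frac{2n(n-1)}{r^2}+\frac{r^2}{2}-(2n-3)+\frac{2r}{\sqrt{2n}},
\]
not the expression with constant $-(n-1)$. The two agree only when $n=2$.

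Second, and more seriously, $F(-\sqrt{2n})=0$ for every $n$, and $F'(-\sqrt{2n})=0$ as well. This is forced: $g^4$ has a double zero at $r=-\sqrt{2n}$, while $\eta=2nF/g^4$ has the finite limit $\frac{4n-3}{8n}$. So the endpoint is degenerate for all $n$, not just for $n=2$. Any estimate that bounds the terms of $F$ separately must therefore fail. With the correct constant, your AM--GM bound gives $2\sqrt{n(n-1)}-(2n-1)<0$ for every $n$.

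Your fallbacks do not close this. The endpoint limit only gives positivity on a neighbourhood of $-\sqrt{2n}$ of unspecified size. You then have no argument on the rest of $(-\sqrt{2n},0)$, and the "direct one-variable minimization" is not carried out.

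\textbf{The missing idea is to cancel the double zero exactly.} One has
\[
2r^2F(r)=(r+\sqrt{2n})^2\Bigl(r^2-\tfrac{4(n-1)}{\sqrt{2n}}\,r+2(n-1)\Bigr),
\]
and hence
\[
\eta=\dfrac{n\bigl(r^2-\frac{4(n-1)}{\sqrt{2n}}r+2(n-1)\bigr)}{r^2(\sqrt{2n}-r)^2}.
\]
Every term of the numerator is positive for $r<0$, so $\eta>0$ there. This formula also recovers the endpoint value $\frac{4n-3}{8n}$.

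The paper reaches the same cancellation by a lower bound rather than an identity. It uses $\frac{r}{r-\sqrt{2n}}\le\frac12$ and $\frac r2-\frac{n-1}{r}+\frac1{\sqrt{2n}}\ge0$ to bound $\frac{r}{2n-r^2}$ from below. After that, the numerator collapses to the perfect square $\frac{n-1}{\sqrt{2n}}(r+\sqrt{2n})^2$, which gives $\eta\ge\frac{2n(n-1)}{\sqrt{2n}\,r^2(\sqrt{2n}-r)}>\frac{n-1}{4n}$.

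Your Step~3, on uniformity when passing from $h>0$ to the angle comparison, is more careful than the paper, which takes that implication for granted. It is not where the problem lies.
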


\begin{proof}
By Lemma~\ref{lem:eta-comparison}, it suffices to verify that the spherical solution $g(r)=\sqrt{2n-r^{2}}$ satisfies $\eta>0$.

For $g(r)=\sqrt{2n-r^{2}}$, we have $g'=\frac{-r}{\sqrt{2n-r^{2}}},$ and $g''=\frac{-2n}{(2n-r^{2})^{\frac{3}{2}}}.$ Substituting these two identities into $\eta$, we obtain
\begin{equation*}
	\begin{aligned}
		\frac{1}{2}\eta
		&=
		\frac{1}{2}\left(\frac{n-1}{r^{2}}+\frac{1}{2}\right)
		\frac{2n}{2n-r^{2}}
		+
		\left(\frac{r}{2}-\frac{n-1}{r}+\frac{1}{\sqrt{2n}}\right)
		\frac{2nr}{(2n-r^{2})^{2}}\\
		&=
		\frac{2n}{2n-r^{2}}
		\left[
		\frac{1}{2}\left(\frac{n-1}{r^{2}}+\frac{1}{2}\right)
		+
		\left(\frac{r}{2}-\frac{n-1}{r}+\frac{1}{\sqrt{2n}}\right)
		\frac{r}{2n-r^{2}}
		\right].
	\end{aligned}
\end{equation*}

We estimate the factor $\frac{r}{2n-r^{2}}$ appearing in the second term on the right-hand side. It is easy to see that the function $\frac{r}{-\sqrt{2n}+r}$ is decreasing, and its value at $r=-\sqrt{2n}$ is $\frac12$. Hence $\frac{r}{-\sqrt{2n}+r}\leq \frac12$. Therefore,
\begin{equation*}
	\frac{r}{2n-r^{2}}
	=
	\frac{1}{-\sqrt{2n}-r}
	\frac{r}{-\sqrt{2n}+r}
	\geq
	\frac{1}{2}\frac{1}{-\sqrt{2n}-r}.
\end{equation*}

Continuing the computation of $\eta$, by the above estimate and the fact that $\frac{r}{2}-\frac{n-1}{r}+\frac{1}{\sqrt{2n}}\geq 0$ for $r$ sufficiently close to $-\sqrt{2n}$, we obtain
\begin{equation*}
	\begin{aligned}
		\frac{1}{2}\eta
		&\geq
		\frac{2n}{2n-r^{2}}
		\left[
		\frac{1}{2}\left(\frac{n-1}{r^{2}}+\frac{1}{2}\right)
		+
		\left(\frac{r}{2}-\frac{n-1}{r}+\frac{1}{\sqrt{2n}}\right)
		\frac{1}{2}
		\frac{1}{-\sqrt{2n}-r}
		\right]\\
		&=
		\frac{n}{2n-r^{2}}
		\left[
		\left(\frac{n-1}{r^{2}}+\frac{1}{2}\right)
		+
		\left(\frac{r}{2}-\frac{n-1}{r}+\frac{1}{\sqrt{2n}}\right)
		\frac{1}{-\sqrt{2n}-r}
		\right]\\
		&=
		\frac{n}{2n-r^{2}}
		\frac{1}{r^{2}(r+\sqrt{2n})}
		\bigg[
		(n-1)r+\sqrt{2n}(n-1)
		+\frac{\sqrt{2n}}{2}r^{2}\\
		&\qquad\qquad
		+(n-1)r
		-\frac{1}{\sqrt{2n}}r^{2}
		\bigg]\\
			\end{aligned}
	\end{equation*}
	\begin{equation*}
		\begin{aligned}
		&=
		\frac{n}{2n-r^{2}}
		\frac{1}{r^{2}(r+\sqrt{2n})}
		\left[
		\frac{1}{\sqrt{2n}}(n-1)r^{2}
		+2(n-1)r
		+\sqrt{2n}(n-1)
		\right]\\
		&=
		\frac{n}{2n-r^{2}}
		\frac{1}{r^{2}(r+\sqrt{2n})}
		\frac{n-1}{\sqrt{2n}}
		(r+\sqrt{2n})^{2}\\
		&=
		\frac{n(n-1)}{\sqrt{2n}}
		\frac{1}{r^2}
		\frac{1}{\sqrt{2n}-r}\\
		&>
		\frac{n-1}{8n}
		>0.
	\end{aligned}
\end{equation*}
Thus $\eta>0$. The lemma follows.
\end{proof}

Lemma~\ref{lem:sphere-comparison} can be viewed as another version of the comparison theorem near the sphere given in Appendix B of \cite{Drugan2013ImmersedS}.

\subsection{Construction of the Angenent Torus and Classification of Solutions}

\begin{figure}[h]
	\centering
	\makebox[\textwidth][c]{\includegraphics[width=5.5in]{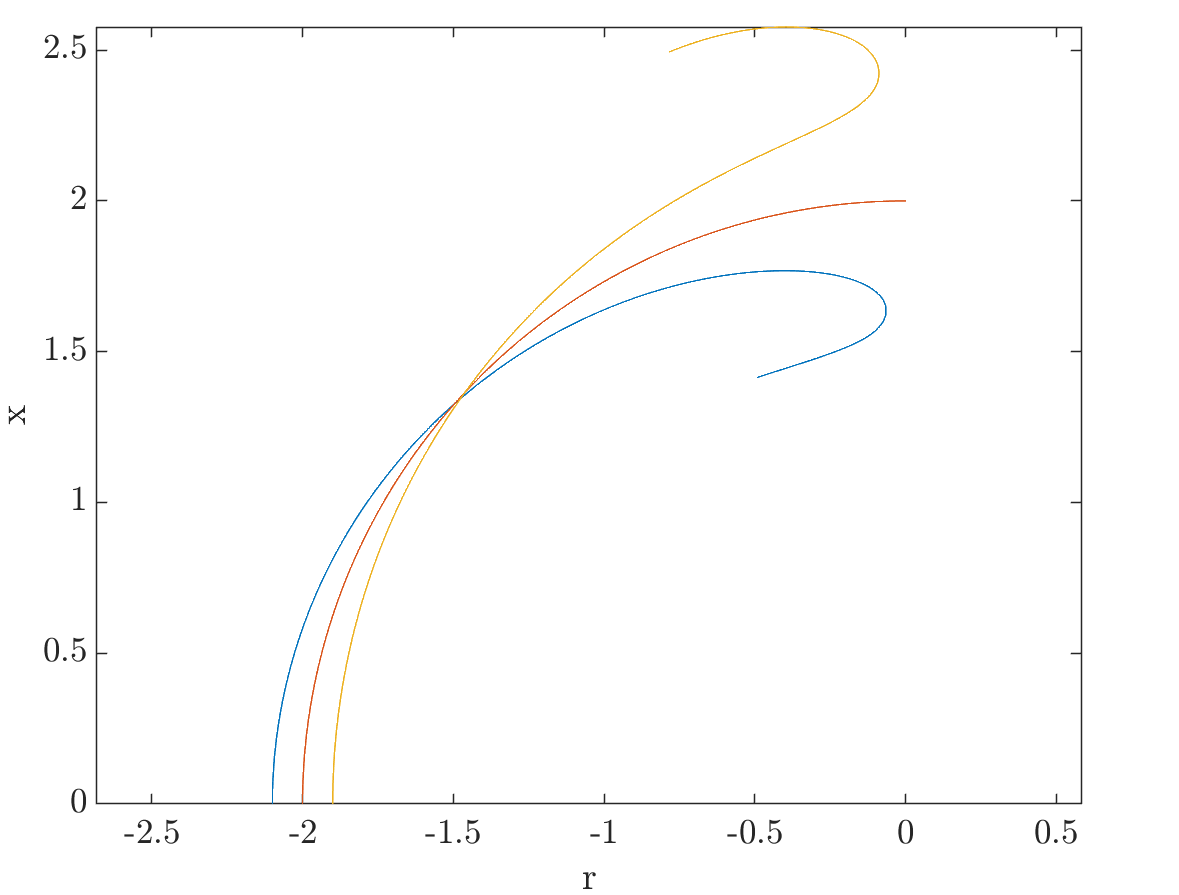}}
	\caption{}
	\label{fig:0_pic}
\end{figure}

This subsection studies how the solutions of the initial value problem \eqref{ivp:uR} vary with the initial value $u(0)=R$. We prove the existence of the Angenent torus by studying the intersection of the trajectories formed by the vertical points of the solutions $u_R$. The following theorem gives a new proof of results analogous to Lemma 9 and Lemma 10 in \cite{Drugan2013ImmersedS}. The key point of the proof is the behavior of solutions near the sphere. Figure~\ref{fig:0_pic} illustrates the typical shapes of these solutions. When the initial value satisfies $R>\sqrt{2n}$, the solution $u_R$ has a vertical point at which $u_R'=-\infty$; while when $R<\sqrt{2(n-1)}$, the solution $u_R$ has a vertical point at which $u_R'=+\infty$.

\begin{theorem}\label{thm:sphere-near-solutions}
(\cite{Drugan2013ImmersedS})
({\romannumeral1}): When $R\in(\sqrt{2n},+\infty)$, the solution $u_R$ of the initial value problem \eqref{ivp:uR} has a vertical point at which $u_R'=-\infty$. Moreover, $u_R''<0$ on the interval from the initial point to this vertical point, and the vertical points form a continuous curve in the $(x,r)$-plane. This curve starts from $(0,+\infty)$, extends to $(\sqrt{2n},0)$, and is bounded in the $x$-direction.

({\romannumeral2}): When $R\in(\sqrt{2(n-1)},\sqrt{2n})$, for the solution $u_R$ of the initial value problem \eqref{ivp:uR}, there exists a corresponding value of $x$ such that $u_R''(x)=0$, and $u_R$ always has a horizontal point which is a minimum point.

({\romannumeral3}): When $R\in(0,\sqrt{2(n-1)})$, the solution $u_R$ of the initial value problem \eqref{ivp:uR} has a vertical point at which $u_R'=+\infty$. Moreover, $u_R''>0$ on the interval from the initial point to this vertical point, and the vertical points form a continuous curve which starts from $(0,0)$ and extends outside every bounded set.
\end{theorem}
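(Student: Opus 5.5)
For part (i), the plan is a continuation argument in $R$, anchored at both ends of the interval. Lemma~\ref{lem:large-R-vertical-point} supplies vertical points with $u_R'=-\infty$ for all sufficiently large $R$, and these tend to $(0,+\infty)$.

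At the other end, $R\downarrow\sqrt{2n}$, I would use Lemma~\ref{lem:sphere-comparison}. Reflecting so that the initial point sits at $r_0$ near $-\sqrt{2n}$, the lemma places the profile curves starting just outside the sphere strictly on one side of the spherical profile $x^2+r^2=2n$. Since the spherical solution has its vertical point at $(\sqrt{2n},0)$, continuous dependence together with this one-sided comparison should give two things for $R$ slightly above $\sqrt{2n}$:
\begin{itemize}
\item $u_R$ stays above the sphere on the concave branch;
\item $u_R$ has a nondegenerate vertical point close to $(\sqrt{2n},0)$.
\end{itemize}

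Having vertical points at both ends, I apply Lemma~\ref{lem:vertical-point-continuity}(i) to any pair $R_2<R_1$ in $(\sqrt{2n},\infty)$. This gives existence of the vertical point for every intermediate $R$, concavity $u_R''<0$ on $(0,b_R)$, and continuity of $b_R$ in $R$. The lemma does this unless the vertical points hit $r=0$ or escape to infinity, so I must rule out both exceptions for $R$ in compact subintervals.

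\begin{itemize}
\item \emph{Hitting $r=0$.} A concave decreasing branch that stays strictly above the sphere cannot reach the axis before turning vertical. I would make this precise via the comparison with the spherical angle function, or alternatively by noting that $u\to0$ forces $\theta'\sim (n-1)/u\to+\infty$, which is incompatible with $u''<0$.
\item \emph{Escaping to infinity.} I would bound $b_R$ using the integral estimate of Step~2 of Lemma~\ref{lem:large-R-estimate}. On the concave branch, $\frac{d\theta}{dx}\le \frac{n-1}{u}-\frac u2<0$ whenever $u>\sqrt{2(n-1)}$, and $\theta$ ranges only over $(-\pi/2,0)$. This bounds the $x$-length of the part of the branch where $u\ge\sqrt{2(n-1)}+\epsilon$. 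Since a concave decreasing graph moves right by a controlled amount before dropping further, the vertical points have bounded $x$-coordinate.
\end{itemize}

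Lemma~\ref{lem:vertical-point-curve} then assembles these points into a single continuous curve running from $(0,+\infty)$ to $(\sqrt{2n},0)$ and bounded in $x$.

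Part (iii) runs the same way with Lemma~\ref{lem:vertical-point-continuity}(ii). The start at $(0,0)$ comes from Lemma~\ref{lem:small-R-vertical-point}, and $u_R''>0$ propagates by Lemma~\ref{lem:concavity-propagation}. For the claim that the curve leaves every bounded set, I would argue as follows as $R\uparrow\sqrt{2(n-1)}$:
\begin{itemize}
\item $u_R$ converges to the cylinder $u\equiv\sqrt{2(n-1)}$ uniformly on compacts, and the cylinder has no vertical point.
\item Any vertical point staying in a bounded set would therefore converge, by nondegeneracy as a maximum of $g$ (Lemma~\ref{lem:maximum-principle}(ii)), to a vertical point of the cylinder. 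That is a contradiction.
\item Existence of $b_R$ for every $R$ in the interval then follows by continuation. The only failure allowed by Lemma~\ref{lem:vertical-point-continuity}(ii) is escape to infinity, and that can happen only at the endpoint $\sqrt{2(n-1)}$: for $R$ bounded away from it, the convex branch with $u<\sqrt{2(n-1)}$ satisfies $\theta'\ge \frac{n-1}{u}-\frac u2\ge c>0$, so $\theta$ reaches $\pi/2$ within bounded $x$.
\end{itemize}

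For part (ii), take $R\in(\sqrt{2(n-1)},\sqrt{2n})$. By Lemma~\ref{lem:maximum-principle}, $x=0$ is a local maximum, so $u_R''(0)<0$. Suppose $u''<0$ persisted all the way down with $u$ decreasing. Then either $u$ would reach a vertical point, or it would cross the sphere. I would exclude both using the comparison of Lemma~\ref{lem:sphere-comparison}: initial data inside the sphere keep the profile inside and bent away from vertical. It follows that $u''$ must vanish somewhere. Once $u$ has dropped below $\sqrt{2(n-1)}$, the equation gives $\theta'>0$ near $\theta=0$, so $u$ turns up and attains a horizontal minimum. This minimum is a genuine minimum by Lemma~\ref{lem:maximum-principle}(i).

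The main obstacle is the behavior at $R\to\sqrt{2n}$. Lemma~\ref{lem:sphere-comparison} is only local in $r_0$, and turning it into a statement about all $R$ in $(\sqrt{2n},\infty)$ and $(\sqrt{2(n-1)},\sqrt{2n})$ is delicate. My first attempt is the continuation argument above combined with the observation that a solution distinct from the sphere cannot be tangent to it at a vertical point, by ODE uniqueness.
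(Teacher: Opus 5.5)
Your overall architecture (anchor the vertical-point curve at large $R$ and near $\sqrt{2n}$, continue with Lemma~\ref{lem:vertical-point-continuity}, assemble with Lemma~\ref{lem:vertical-point-curve}, use the cylinder limit in (iii)) matches the paper's. However, the step you flag as the main obstacle is a genuine gap, and neither of your proposed fixes closes it.

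The sphere's vertical point $(\sqrt{2n},0)$ lies on the axis $r=0$, where the equation is singular. So continuous dependence gives no nearby vertical point for $u_R$: a priori $u_R$ can run into the axis. Any solution that reaches $r=0$ does so perpendicularly ($g'(0)=0$, i.e.\ $u'\to-\infty$).

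\begin{itemize}
\item Your argument ``$u\to0$ forces $\theta'\to+\infty$'' breaks down in exactly this case. There $\frac{x}{2}u'\to-\infty$ competes with $\frac{n-1}{u}$; the sphere itself has $\theta'=-1/u\to-\infty$ with $u''<0$.
\item Lemma~\ref{lem:sphere-comparison} compares $dx/dr$ at equal $r$. It does not place $u_R$ above the sphere, and both angles can tend to $0$ as $r\to0$. Even a branch lying above the sphere could still meet the axis at some $b\ge\sqrt{2n}$.
\item ODE uniqueness at the sphere's vertical point says nothing about meeting the axis at $b\neq\sqrt{2n}$.
\item The comparison is local in $R$, so it cannot remove the axis exception at intermediate $R$ in the continuation.
\end{itemize}

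The missing ingredient is a global rigidity statement, which the paper supplies as follows.
\begin{itemize}
\item A short integration of the angle equation shows that a solution defined up to $r=0$ must have $g'(0)=0$.
\item Hence a concave branch reaching the axis reflects under $x\mapsto -x$ to a smooth closed convex profile. This generates a compact embedded mean-convex self-shrinker, which by Huisken's theorem is the round sphere, forcing $R=\sqrt{2n}$.
\item This gives $D_R<0$. Then $\varphi_R<\varphi_{-\sqrt{2n}}$ rules out $g_R'(D_R)=+\infty$, so $g_R'(D_R)=-\infty$ and $g_R$ has a horizontal point.
\end{itemize}
The same rigidity argument removes the axis exception for every $R>\sqrt{2n}$ in (i), and yields $D_R<0$ in (ii).

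Two further steps need repair.

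\textbf{Part (ii).} You apply Lemma~\ref{lem:sphere-comparison} to every $R\in(\sqrt{2(n-1)},\sqrt{2n})$, but it only holds for $R$ near $\sqrt{2n}$. The paper obtains the zero of $u_R''$ and the horizontal minimum only there: since $g_R'(D_R)=+\infty$, $g_R''$ passes from negative to $+\infty$. It then propagates the nondegenerate minimum across the whole interval.

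\textbf{Part (iii).} Your bound $\theta'\ge\frac{n-1}{u}-\frac{u}{2}\ge c>0$ presupposes $u<\sqrt{2(n-1)}$ along the whole convex branch, and nothing in your argument guarantees this. Near the cylinder it is false. Linearizing at $u\equiv\sqrt{2(n-1)}$ gives $w''-\frac{x}{2}w'+w=0$, whose even solution is a multiple of $x^2-2$. So for $R$ close to $\sqrt{2(n-1)}$ the branch crosses $r=\sqrt{2(n-1)}$ near $x=\sqrt2$, while its slope is still small and well before it turns vertical. After that crossing $\frac{n-1}{u}-\frac{u}{2}<0$ and your lower bound is lost. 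The paper instead keeps the vertical points bounded using two results: the Kleene--M{\o}ller classification of rotational shrinkers unbounded in $x$, and Lemma~4 of Drugan--Kleene.
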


\begin{proof}
We first prove the following fact: if a solution $g$ of \eqref{eq:g-equation} is defined at some $r<0$ and also at $r=0$, then one must have $g'(0)=0$.
We prove this by contradiction. Suppose that $g'(0)>0$. Then on some small interval $(a,0)\subset (-\infty,0)$, there exists a constant $C_4>0$, depending on $g$, such that $g'>C_4$. By equation \eqref{eq:g-equation}, we have
\begin{equation*}
	\begin{aligned}
		\frac{\cos\varphi}{\sin\varphi}\frac{d\varphi}{dr}
		&=\frac{r}{2}-\frac{n-1}{r}-\frac{g}{2g'}\\
		&>\frac{r}{2}-\frac{n-1}{r}-\frac{g}{2C_4}\\
		&>\frac{r}{2}-\frac{n-1}{r}-C_9,
	\end{aligned}
\end{equation*}
where $C_9>0$ is a constant depending on the values of $g$ on $(a,0)$ and on $C_4$. Integrating the above inequality over $(a,r)$, we obtain
\begin{equation*}
	\sin\varphi(r)>
	\sin\varphi(a)
	e^{\frac{1}{4}(r^2-a^2)-C_9(r-a)}
	\left(\frac{a}{r}\right)^{n-1}.
\end{equation*}
However, the right-hand side tends to $+\infty$ as $r\to0$, which is a contradiction. A similar argument rules out the case $g'(0)<0$. Therefore, $g'(0)=0$.

For convenience, we first discuss the case on the negative half of the $r$-axis, and then obtain the corresponding conclusion on the positive half by symmetry.

({\romannumeral1}): Suppose that $R\in(-\infty,-\sqrt{2n})$ is close to $-\sqrt{2n}$. Let $(R,D_R)$ be the maximal interval of existence of the corresponding solution $g_R$. If $D_R=0$, then by the fact proved above, we have $g_R'(0)=0$. This means that the graph of $g_R$ generates a convex self-shrinker homeomorphic to a sphere. By the theorem of Huisken \cite{Huisken1990AsymptoticbehaviorFS}, a compact mean-convex self-shrinker must be a sphere. However, the spherical solution corresponds to the initial point $R=-\sqrt{2n}$, which contradicts the assumption $R<-\sqrt{2n}$. Hence $D_R<0$. By Lemma~\ref{lem:sphere-comparison}, we have $\varphi_R<\varphi_{-\sqrt{2n}}$. If $g_R'(D_R)=+\infty$, then this contradicts $\varphi_R<\varphi_{-\sqrt{2n}}$. Therefore, $g_R'(D_R)=-\infty$. Since $g_R'(R)=+\infty$, by continuity there exists a point in $(R,D_R)$ at which $g_R'=0$. Thus, when $R\in(-\infty,-\sqrt{2n})$ is close to $-\sqrt{2n}$, the solution $g_R$ must have a horizontal point. Moreover, by the continuous dependence of solutions on initial data, these horizontal points converge to $(x,r)=(\sqrt{2n},0)$.

We now discuss the corresponding situation on the positive half of the $r$-axis. By the conclusion proved in the preceding paragraph, when $R\in(\sqrt{2n},+\infty)$ is close to $\sqrt{2n}$, the solution $u_R$ has a vertical point. By Lemma~\ref{lem:vertical-point-curve}, Lemma~\ref{lem:vertical-point-continuity}, and Lemma~\ref{lem:large-R-vertical-point}, the vertical points form a continuous curve starting from $(0,+\infty)$. The boundedness of this curve in the $x$-direction is contained in Lemma~\ref{lem:concavity-propagation}. Hence this curve extends from $(0,+\infty)$ to $(\sqrt{2n},0)$.

({\romannumeral2}): Suppose that $R\in(-\sqrt{2n},-\sqrt{2(n-1)})$ is close to $-\sqrt{2n}$. Let $(R,D_R)$ be the maximal interval of existence of the corresponding solution $g_R$. By Lemma~\ref{lem:sphere-comparison}, we have $\varphi_R>\varphi_{-\sqrt{2n}}$. Using an argument similar to that in ({\romannumeral1}), we obtain $D_R<0$ and $g_R'(D_R)=+\infty$. Thus, $\lim_{r\to D_R^-}g_R''(r)=+\infty$. On the other hand, near the initial point $R$, the solution is close to the spherical solution, and hence $g_R''<0$ there. Therefore $g_R''$ must have a zero in the interior of the interval. Moreover, since $g_R'(D_R)=+\infty$, the corresponding solution $u_R$ has a horizontal point. At an interior zero of $g_R''$ where $g_R'$ is finite and nonzero, the relation $u_R''=-\frac{g_R''}{(g_R')^3}$ implies that $u_R''$ also has a zero. By Lemma~\ref{lem:maximum-principle}, the horizontal point of $u_R$ must be a minimum point. Therefore, by an argument analogous to the previous discussion on the continuity of the horizontal points of $g$, for $R\in(\sqrt{2(n-1)},\sqrt{2n})$, the horizontal point of $u_R$ persists continuously, and a zero of $u_R''$ always exists.

({\romannumeral3}): By Lemma~\ref{lem:small-R-vertical-point}, when $R$ is sufficiently small, the solution $u_R$ of \eqref{eq:u-equation} has a vertical point, and $u_R''>0$. By the continuity of the horizontal points of $g_R$, the solution $u_R$ always has this property, unless $u_R$ extends outside every bounded set. This would mean that for some $R\in(0,\sqrt{2(n-1)})$, the solution $u_R$ is unbounded. By the result on rotationally symmetric self-shrinkers in \cite{Kleene2010SelfshrinkersWA}, the only self-shrinkers which extend to infinity in the $x$-direction are trumpet-shaped self-shrinkers and the cylinder $r=\sqrt{2(n-1)}$. The trumpet-shaped self-shrinkers must be non-embedded, whereas the solutions considered here are symmetric and hence embedded. Therefore, the trumpet-shaped case cannot occur. By Lemma 4 in \cite{Drugan2013ImmersedS}, if $b_R<+\infty$, then $\lim_{x\to b_R}u_R(x)<+\infty$. Therefore, for every $R\in(0,\sqrt{2(n-1)})$, the vertical point of $u_R$ is bounded. As $R\to\sqrt{2(n-1)}$, the continuous dependence of solutions on initial data implies that $u_R$ converges to the straight line $r=\sqrt{2(n-1)}$. In this limit, the vertical point escapes every bounded set. Combining this with Lemma~\ref{lem:small-R-vertical-point}, we obtain ({\romannumeral3}).
\end{proof}

\begin{theorem}\label{thm:angenent-torus-existence}
(\cite{Angenent1992})
There exists an embedded rotationally symmetric self-shrinking torus, namely the Angenent torus.
\end{theorem}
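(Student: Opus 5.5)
The plan is to obtain the torus as the intersection of the two vertical-point trajectories constructed in Theorem~\ref{thm:sphere-near-solutions}, and then to use reflection symmetry to close up the profile curve. Let $\mathcal C_1=\{(b_R,u_R(b_R)):R\in(\sqrt{2n},+\infty)\}$ be the curve of vertical points with $u_R'=-\infty$ from part (i). By Theorem~\ref{thm:sphere-near-solutions}(i), $\mathcal C_1$ is a continuous curve running from $(0,+\infty)$ to $(\sqrt{2n},0)$ and bounded in $x$. Let $\mathcal C_2=\{(b_R,u_R(b_R)):R\in(0,\sqrt{2(n-1)})\}$ be the curve of vertical points with $u_R'=+\infty$ from part (iii). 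It is continuous, starts at $(0,0)$, and leaves every bounded set.

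The first step is topological. In the quarter plane $\{x\ge0,\ r\ge0\}$, the curve $\mathcal C_1$, together with the segment $\{0\}\times[0,+\infty)$ of the $r$-axis and the segment $[0,\sqrt{2n}]\times\{0\}$ of the $x$-axis, encloses an unbounded region $\Omega$. The boundedness of $\mathcal C_1$ in $x$ ensures that $\Omega$ separates the quarter plane.

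The curve $\mathcal C_2$ starts at $(0,0)$ and must enter $\Omega$ near its start. Lemma~\ref{lem:small-R-vertical-point} gives $(b_R,u_R(b_R))\to(0,0)$ with $b_R>0$, $u_R>0$, so $\mathcal C_2$ begins in the interior. To escape every bounded set it must either leave $\Omega$ through $\mathcal C_1$, or go to infinity inside $\Omega$, that is, with $r\to\infty$ and $x$ bounded.

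I would rule out the second option using part (iii): as $R\to\sqrt{2(n-1)}$, $u_R$ converges to the line $r=\sqrt{2(n-1)}$, so the escape happens with $r$ bounded and $x\to\infty$. Since $\mathcal C_1$ separates $(0,0)$ from points with large $x$ and bounded $r$, there exist $R_1>\sqrt{2n}$ and $R_2<\sqrt{2(n-1)}$ with $(b_{R_1},u_{R_1}(b_{R_1}))=(b_{R_2},u_{R_2}(b_{R_2}))=:P$. The curve $\mathcal C_2$ cannot pass out through the $x$- or $r$-axis, because $u_R>0$ and $b_R>0$ there.

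The second step is to glue. At $P$ the curves $\Gamma_{R_1}$ and $\Gamma_{R_2}$ pass through the same point with vertical tangent. One has $\theta=-\pi/2$ for $\Gamma_{R_1}$ (traversed with $x$ increasing) and $\theta=+\pi/2$ for $\Gamma_{R_2}$. Traversing $\Gamma_{R_2}$ backwards reverses its direction, so the two arc-length solutions of \eqref{eq:arclength-system} agree at $P$ with the same tangent angle. By uniqueness they are one curve. Thus the arc of $\Gamma_{R_1}$ from $(0,R_1)$ to $P$, followed by the arc of $\Gamma_{R_2}$ from $P$ back to $(0,R_2)$, is a single solution curve. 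Both endpoints lie on the $r$-axis with horizontal tangents. Since \eqref{eq:arclength-system} is invariant under $x\mapsto -x$, the reflection of this arc is also a solution, and the union is a smooth closed curve symmetric about the $r$-axis.

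Embeddedness comes from the convexity statements. On $[0,b_{R_1}]$ we have $u_{R_1}''<0$ and on $[0,b_{R_2}]$ we have $u_{R_2}''>0$, so the upper arc is a decreasing graph and the lower arc is an increasing graph over $[0,b_{R_1}]$. Since $R_1>R_2$, the two graphs meet only at $x=b$, so the half-curve is a simple arc in $\{x\ge0,\ r>0\}$ and its reflection lies in $\{x\le 0\}$. The resulting curve is a simple closed curve in $r>0$, and rotating it about the $x$-axis gives an embedded $\mathbb S^1\times\mathbb S^{n-1}$ self-shrinker.

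The main obstacle is the separation argument: showing rigorously that $\mathcal C_2$ must actually cross $\mathcal C_1$ rather than slip past it. This needs the precise endpoint behaviour of both curves — $\mathcal C_1$ reaching the $x$-axis at $(\sqrt{2n},0)$ and $\mathcal C_2$ escaping with $x\to\infty$ at bounded height — together with a Jordan-curve argument in the closed quarter plane.
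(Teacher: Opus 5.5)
Your proposal is correct and follows the same route as the paper. You intersect the vertical-point curve for $R>\sqrt{2n}$ with the one for $R<\sqrt{2(n-1)}$ by a planar separation argument, glue the two arcs at the common vertical point by ODE uniqueness, and reflect across the $r$-axis. You are more explicit than the paper in two places: you rule out the second curve escaping to infinity inside the unbounded region cut out by the first curve, and you derive embeddedness from the monotonicity and convexity of the two arcs. One small correction: for the escape argument you only need $b_R\to\infty$ as $R\to\sqrt{2(n-1)}$, which follows from $C^1$ convergence of $u_R$ to the cylinder on compact sets. That convergence does not by itself show $u_R(b_R)$ stays bounded, so drop the ``$r$ bounded'' claim.
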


\begin{proof}
By Theorem~\ref{thm:sphere-near-solutions}, as $R$ varies from $+\infty$ to $\sqrt{2n}$, the vertical points of the solutions $u_R$ of the initial value problem \eqref{ivp:uR} form a continuous curve. This curve starts from $(0,+\infty)$, extends to $(\sqrt{2n},0)$, and is bounded in the $x$-direction. Similarly, as $R$ varies from $0$ to $\sqrt{2(n-1)}$, the vertical points of the solutions $u_R$ form a continuous curve starting from $(0,0)$ and escaping every bounded set.

The first curve, together with suitable portions of the coordinate axes, bounds a region in the $(x,r)$-plane. Since the second curve starts from the boundary of this region and eventually leaves every bounded set, by continuity it must intersect the first curve.

At an intersection point, the two solution curves have vertical tangent lines. Equivalently, when they are written as graphs $x=g(r)$, they have the same initial data $g(r_*)=x_*$ and $g'(r_*)=0$. Since equation \eqref{eq:g-equation} is a regular ordinary differential equation at $g'=0$, the uniqueness theorem for ordinary differential equations implies that the two curves glue together smoothly at the intersection point. Thus the curve $\Gamma^{+}$ formed by the two solution curves is smooth. Its reflected curve $\Gamma^{-}$ with respect to the $r$-axis also exists, and $\Gamma^{+}\cup\Gamma^{-}$ is a smooth curve homeomorphic to $\mathbb{S}^1$. Rotating this curve about the $x$-axis gives an embedded torus.
\end{proof}

\section{Application of the Comparison Theorem \expandafter{\romannumeral 2}: Monotonicity of Horizontal Points}

\begin{figure}[h]
	\centering
	\makebox[\textwidth][c]{\includegraphics[width=6in]{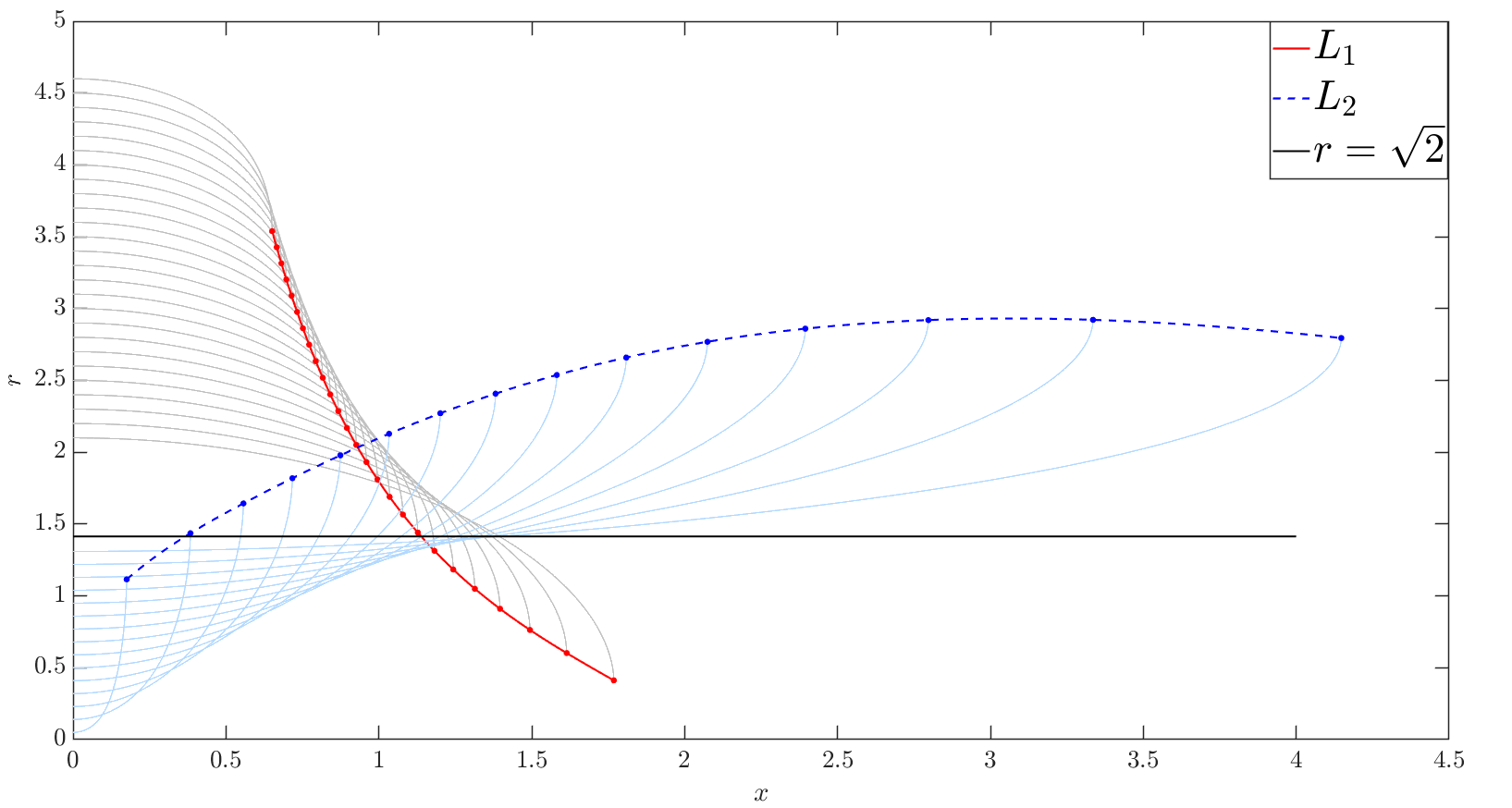}}
	\caption{}
	\label{fig:1_pic}
\end{figure}

After the construction of the Angenent torus, a natural question is the uniqueness of such a torus. In this section, we consider this problem from the viewpoint of the trajectories of vertical points, and prove some partial results by using the comparison theorem.

We first introduce the following definition of monotonicity.

\begin{defn}
Suppose that $R\in(-\infty,-\sqrt{2n})$, and denote the horizontal point of $g_R$ by $(\bar R,g_R(\bar R))$. If there exists an interval $I_R\subset(-\infty,-\sqrt{2n})$ containing $R$, such that for any $R_1,R_2\in I_R$,
\begin{equation*}
	R_2>R_1
	\Rightarrow
	\bar R_2>\bar R_1,\qquad
	g_{R_2}(\bar R_2)>g_{R_1}(\bar R_1),
\end{equation*}
then we say that the curve of horizontal points of the solutions $g_{\tilde R}$ is monotone increasing for $\tilde R\in I_R$, or that the curve of horizontal points of $g_R$ is monotone increasing at $R$.

We denote by $L_1$ the curve formed by the horizontal points of the solutions $g_R$. The above monotonicity means that $L_1$ is the graph of a monotone increasing function; in this case, we say that $L_1$ is monotone increasing. Sometimes we also consider the reflection of $L_1$ with respect to the $r$-axis, still denoted by $L_1$. This corresponds to the trajectory of vertical points of the solutions $u_R$ for $R\in(\sqrt{2n},+\infty)$. Under this reflection, the above definition means that $L_1$ is the graph of a decreasing function. If neither of these cases occurs, then we say that $L_1$ is non-monotone.

When $R\in(0,\sqrt{2(n-1)})$, we denote by $L_2$ the curve formed by the horizontal points of the corresponding solutions $g_R$. A similar definition of monotonicity applies to $L_2$. The curves $L_1$ and $L_2$ defined in this way depend on the initial slope $C$, but we will usually not indicate this dependence in the notation.
\end{defn}

In the discussion of the uniqueness of the Angenent torus, we use the trajectories of vertical points as the starting point. The previous section shows that the existence of the Angenent torus is related to the existence of an intersection between $L_1$ and $L_2$ in the case $C=+\infty$. Since the previous section established the existence of $L_1$ and $L_2$, it is natural to further study their properties. The following theorem shows that the monotonicity of $L_1$ or $L_2$ is related to the conditions $h>0$ and $\eta\geq0$.

\begin{theorem}\label{thm:horizontal-point-monotonicity}
Let $C>0$ be fixed. In this theorem, we write $g_R$ for $g_{R,C}$ for simplicity. Suppose that for some $R\in(-\infty,-\sqrt{2n})$, the solution $g_R$ satisfies
\begin{equation*}
	\eta
	=
	(1+(g'_{R})^{2})
	\left(\frac{n-1}{r^{2}}+\frac{1}{2}\right)
	+
	2\left[
	\left(\frac{r}{2}-\frac{n-1}{r}\right)
	-
	\left(\frac{R}{2}-\frac{n-1}{R}\right)
	\right]g'_{R}g''_{R}
	\geq0
\end{equation*}
on the portion from the initial point to the horizontal point, namely on the portion where $g_R\geq0$ and $g_R'\geq0$. Then the curve $L_1$ of horizontal points of the solutions $g_R$ is monotone increasing at $R$.

For $R\in(0,\sqrt{2(n-1)})$, if the corresponding solution $g_R$ satisfies the same inequality on the corresponding portion, then the same conclusion holds for $L_2$.

Moreover, if for all sufficiently large $C>0$, the solutions $g_{R,C}$ satisfy the strict condition $\eta>0$ on the corresponding portion, then the same monotonicity conclusion holds in the limiting case $C=+\infty$.
\end{theorem}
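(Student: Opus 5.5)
The plan is to reduce the monotonicity of $L_1$ (resp. $L_2$) at $R$ to the positivity of the linearized function $h_{R,C}$ from Lemma~\ref{lem:eta-comparison}. Then we read off how the horizontal point moves under a small shift of the initial point.

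\textbf{Step 1: positivity of $h$.} Under the hypothesis $\eta\ge0$ on the portion where $g_R\ge0$ and $g_R'\ge0$, Lemma~\ref{lem:eta-comparison}(i) applies. Both $R\in(-\infty,-\sqrt{2(n-1)})$ and $R\in(0,\sqrt{2(n-1)})$ are allowed there, and $(-\infty,-\sqrt{2n})$ lies in the first range. We conclude $h_{R,C}>0$ on that portion. By definition of $h$, for small $\epsilon>0$ this gives
\[
\varphi_{R+\epsilon,C}(r)>\varphi_{R,C}(r)
\]
on compact subsets of the common domain. To compare up to and slightly beyond the horizontal point $\bar R$, I will use that $\bar R$ is a nondegenerate critical point: $g_R''(\bar R)=-g_R(\bar R)/2<0$ by Lemma~\ref{lem:maximum-principle}. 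Hence $\varphi_{R,C}$ crosses $0$ transversally at $\bar R$, and $\bar R$ depends smoothly on $R$ by the implicit function theorem.

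\textbf{Step 2: $\bar R$ increases.} Differentiating the identity $\varphi_{a,C}(\bar R(a))=0$ in $a$ at $a=R$ gives
\[
h(\bar R)+\varphi_{R,C}'(\bar R)\,\frac{d\bar R}{da}=0 .
\]
Since $\varphi'(\bar R)=g''(\bar R)<0$, we get $\frac{d\bar R}{da}=h(\bar R)/|g''(\bar R)|$. This is positive provided $h(\bar R)>0$. Interior positivity plus continuity gives only $h(\bar R)\ge0$. To exclude $h(\bar R)=0$, I would use the integral representation in Lemma~\ref{lem:sturm-comparison}(ii): the first term $\frac{h(r_0)}{\bar h(r_0)}\bar h(\bar R)$ is strictly positive since $\bar h(\bar R)=\tfrac12 g(\bar R)>0$. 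Alternatively, one can shrink the interval slightly.

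\textbf{Step 3: the height increases.} Write
\[
g_a(\bar R(a))=\int_a^{\bar R(a)}\tan\varphi_{a,C}(s)\,ds .
\]
Differentiating in $a$, the upper-limit term vanishes because $\tan\varphi=0$ at $\bar R$. The lower limit contributes $-C$, and the interior contributes $\int_R^{\bar R}(1+(g')^2)h\,ds$. This expression alone is not obviously positive. The cleaner route is the first-order linearized equation derived before~\eqref{eq:linearized-equation}. Evaluated at $r=\bar R$ with $g'=0$, it gives
\[
\frac{C}{2}-\frac12\int_R^{\bar R}(1+(g')^2)h=h'(\bar R)-\Big(\frac{\bar R}{2}-\frac{n-1}{\bar R}\Big)h(\bar R).
\]
Hence
\[
\frac{d}{da}g_a(\bar R(a))=-2h'(\bar R)+2\Big(\frac{\bar R}{2}-\frac{n-1}{\bar R}\Big)h(\bar R).
\]
The main obstacle is establishing the sign of this quantity. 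The sign of $\frac{\bar R}{2}-\frac{n-1}{\bar R}$ depends on the location of $\bar R$, and I would need $h'(\bar R)\le 0$ together with $\bar R$ in the favorable region. I would try to get this from the ratio $h/\bar h$ in Lemma~\ref{lem:sturm-comparison}(i): $(h/\bar h)'\ge0$ is preserved because $\bar q\ge q$, so $h$ dominates a positive multiple of $\bar h$, whose derivative at $\bar R$ is $\tfrac12 g'(\bar R)-(\cdot)g''(\bar R)$, computable explicitly. Failing that, I would argue geometrically. Since $\varphi_{R+\epsilon}>\varphi_R$ on $[R+\epsilon,\bar R]$, the graph of $g_{R+\epsilon}$ has a larger slope everywhere, so $g_{R+\epsilon}(\bar R_{R+\epsilon})\ge g_{R+\epsilon}(\bar R_R)$. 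I would then compare $g_{R+\epsilon}(\bar R_R)$ with $g_R(\bar R_R)$ using the slope gain integrated over an $O(1)$ interval against the $O(\epsilon C)$ loss from the shifted start.

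\textbf{Step 4: the case $C=+\infty$.} If $\eta_{R,C}>0$ for all large $C$, then Lemma~\ref{lem:eta-comparison}(iii) gives $h_{R,+\infty}>0$. Steps 2–3 then pass to the limit by continuous dependence on $C$, using the nondegeneracy of the horizontal point.
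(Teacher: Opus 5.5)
Your Steps 1 and 2 are sound and agree in substance with the paper. The paper gets $\bar R_2>\bar R_1$ by contradiction from $g'_{R_2}>g'_{R_1}$ and $g''_{R_1}<0$ rather than from the implicit function theorem. Your use of the integral representation to get $h(\bar R)>0$ strictly is a good addition.

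The gap is Step 3: the increase of the height $g_R(\bar R)$ is never established, and neither of your fallbacks can close it. The quantity you computed is exactly $k(\bar R)$, where
\[
k(r)=\partial_a g_a(r)\big|_{a=R}=-C+\int_R^r(1+(g')^2)h\,ds
\]
is the first variation of $g$ at fixed $r$. Since $h>0$, $k$ is strictly increasing from $k(R)=-C<0$. So the sign of $k(\bar R)$ is precisely the question of whether neighbouring profile curves cross before the horizontal point, and positivity of $h$ alone does not decide it.

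Your fallbacks do not resolve this:
\begin{itemize}
\item The ratio argument only gives $h\ge\bar h$, a bound on values, not on $h'(\bar R)$. Indeed $\bar h'(\bar R)=-(\frac R2-\frac{n-1}{R})g''(\bar R)<0$ while $(h/\bar h)'\ge 0$, so the sign of $h'(\bar R)$ stays undetermined.
\item The coefficient $\frac{\bar R}{2}-\frac{n-1}{\bar R}$ is negative whenever $\bar R<-\sqrt{2(n-1)}$, so that term works against you.
\item The ``slope gain versus $O(\epsilon C)$ loss'' comparison is just a restatement of $k(\bar R)>0$, since both sides are of order $\epsilon$.
\end{itemize}

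The missing idea is the crossing dichotomy the paper uses, with finite differences $R_1=R<R_2$ and $R_2$ close to $R_1$.

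\emph{If the two curves cross}, then $g'_{R_2}>g'_{R_1}$ (from $h>0$) makes $g_{R_2}-g_{R_1}$ strictly increasing. So they cross exactly once, at some $r_*\le\bar R_1$, and $g_{R_2}\ge g_{R_1}$ from there on. Since $\bar R_1<\bar R_2$ and $g_{R_2}$ increases on $[\bar R_1,\bar R_2]$, you get $g_{R_2}(\bar R_2)>g_{R_2}(\bar R_1)\ge g_{R_1}(\bar R_1)$.

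\emph{If they do not cross}, you need a second comparison that does not use $\eta$ at all. Write the curves as graphs $r=u_i(x)$ with $u_1(0)=R_1<R_2=u_2(0)$ and $u_i'(0)=1/C$, so $u_1<u_2$. Since $r\mapsto\frac{n-1}{r}-\frac r2$ is decreasing, run a first-contact argument for $\theta_1-\theta_2$ in the $u$-equation; the $\frac x2u'$ terms cancel at a contact point. This gives $\theta_1>\theta_2$, hence $u_1'>u_2'$ and $\theta_1'>\theta_2'$ for $x>0$, so $u_1$ becomes vertical first. That yields $x_1<x_2$, which is the height increase, and also $\bar R_1=u_1(x_1)<u_2(x_1)<u_2(x_2)=\bar R_2$.

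Without this non-crossing branch your Step 3 remains open. Step 4 only passes Steps 2--3 to the limit, so it inherits the same gap.
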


\begin{proof}
The following argument is carried out in the region where $r=u(x)<0$, in the sense of the formal extension of the equation to the negative $r$-axis. The corresponding conclusion on the positive side follows by symmetry.

Consider the solutions $u_1$ and $u_2$ of \eqref{eq:u-equation} corresponding respectively to $g_{R_1}$ and $g_{R_2}$. Their initial values satisfy $u_1(0)<u_2(0)<-\sqrt{2n}$ and $u_i'(0)=\frac{1}{C}$, $i=1,2$. Suppose that both solutions have vertical points, denoted by $(x_1,u_1(x_1))$ and $(x_2,u_2(x_2))$. In what follows, we only consider the interval from $x=0$ to the corresponding vertical point, so $x\geq0$.

We first consider the case where the two solution curves do not intersect. Then $u_1<u_2$ on their common interval of definition. We claim that $\theta_1\geq\theta_2$ on the common interval. Indeed, $\theta_1(0)=\theta_2(0)$ and $\theta_1'(0)>\theta_2'(0)$. If the claim were false, then there would exist a first point $\bar x>0$ such that $\theta_1(\bar x)=\theta_2(\bar x)$, while $\theta_1>\theta_2$ on $(0,\bar x)$. Since $u_i'=\tan\theta_i$, we have $u_1'(\bar x)=u_2'(\bar x)$.

On the other hand, the function $r\mapsto \frac{n-1}{r}-\frac{r}{2}$ is decreasing for $r<0$. Therefore, since $u_1<u_2$, we have on the common interval
\begin{equation}\label{eq:r-function-monotonicity}
	\frac{n-1}{u_1}-\frac{u_1}{2}
	>
	\frac{n-1}{u_2}-\frac{u_2}{2}.
\end{equation}
Thus, at $\bar x$,
\begin{equation*}
	\frac{d\theta_1}{dx}
	-
	\frac{d\theta_2}{dx}
	=
	\frac{n-1}{u_1}-\frac{u_1}{2}
	-
	\left[
	\frac{n-1}{u_2}-\frac{u_2}{2}
	\right]
	>0.
\end{equation*}
This contradicts the choice of $\bar x$ as the first contact point. Hence $\theta_1\geq\theta_2$.

Before the vertical points, the angles lie in an interval where $\tan$ is monotone. Hence $u_1'\geq u_2'$. Using \eqref{eq:u-equation}, \eqref{eq:r-function-monotonicity}, and $x\geq0$, we obtain
\begin{equation*}
	\begin{aligned}
		\frac{d\theta_1}{dx}
		&=
		\frac{n-1}{u_1}
		-\frac{u_1}{2}
		+\frac{x}{2}u_1'\\
		&>
		\frac{n-1}{u_2}
		-\frac{u_2}{2}
		+\frac{x}{2}u_2'
		=
		\frac{d\theta_2}{dx}.
	\end{aligned}
\end{equation*}
Therefore, $\theta_1$ reaches $\frac{\pi}{2}$ faster than $\theta_2$. Hence $x_1<x_2$. Since the curves of $u_1$ and $u_2$ do not intersect and $u_1',u_2'>0$, we also have $u_1(x_1)<u_2(x_2)$. This proves the desired monotonicity in the non-intersecting case.

We now assume that the two curves $u_1$ and $u_2$ intersect. Consider the solution $g_{R_1}$ corresponding to $u_1$, and let $h_{R_1}$ be the corresponding linearized function. If $g_{R_1}$ satisfies the condition $\eta\geq0$, then Lemma~\ref{lem:eta-comparison} implies that $h_{R_1}>0$. Therefore, when $R_2>R_1$ and $R_2$ is sufficiently close to $R_1$, we have $\varphi_{R_2}>\varphi_{R_1}$, or equivalently, $g'_{R_2}>g'_{R_1}$ on the common interval under consideration. Hence $g_{R_2}-g_{R_1}$ is strictly increasing on this interval. In particular, the curves of $g_{R_2}$ and $g_{R_1}$ can intersect at most once.

Let their horizontal points be $(\bar R_2,g_{R_2}(\bar R_2))$ and $(\bar R_1,g_{R_1}(\bar R_1))$. We first prove by contradiction that $\bar R_2>\bar R_1$. Suppose instead that $\bar R_2\leq\bar R_1$. By the concavity property established previously, we have $g''_{R_1}<0$ on the portion from the initial point to the horizontal point. Hence
\begin{equation}\label{eq:horizontal-point-derivative-order}
	g'_{R_1}(\bar R_2)\geq g'_{R_1}(\bar R_1)=0.
\end{equation}
On the other hand, $g'_{R_2}(\bar R_2)=0=g'_{R_1}(\bar R_1)$. Combining this with \eqref{eq:horizontal-point-derivative-order}, we obtain $g'_{R_2}(\bar R_2)\leq g'_{R_1}(\bar R_2)$, which contradicts $g'_{R_2}>g'_{R_1}$. Therefore, $\bar R_2>\bar R_1$.

Next we prove that $g_{R_2}(\bar R_2)>g_{R_1}(\bar R_1)$. Suppose, to the contrary, that $g_{R_1}(\bar R_1)\geq g_{R_2}(\bar R_2)$. Since we are now considering the case where the two curves intersect, and since the curves of $g_{R_2}$ and $g_{R_1}$ can intersect at most once, after their intersection we have $g_{R_2}>g_{R_1}$. In particular, at $r=\bar R_1$, $g_{R_2}(\bar R_1)>g_{R_1}(\bar R_1)$. Combining this with the contrary assumption gives $g_{R_2}(\bar R_1)>g_{R_2}(\bar R_2)$. But we have already proved that $\bar R_1<\bar R_2$, and this contradicts $g'_{R_2}>0$ on the interval before the horizontal point. Therefore, $g_{R_2}(\bar R_2)>g_{R_1}(\bar R_1)$. This proves the monotonicity near $R_1$.

The proof for $L_2$ is analogous.

Finally, suppose that for all sufficiently large $C>0$, the solutions $g_{R,C}$ satisfy the strict condition $\eta>0$. By Lemma~\ref{lem:eta-comparison}, the limiting linearized function also satisfies $h_{R,+\infty}>0$. Therefore the same argument as above applies to the limiting solutions with $C=+\infty$, and the corresponding monotonicity conclusion follows.
\end{proof}

For the monotonicity condition given in the theorem above, we have already verified that it holds for the special solution $x^{2}+r^{2}=2n$. Moreover, according to some numerical computations, as $R$ varies from $+\infty$ to $\sqrt{2n}$, the horizontal points of $g_R$ are monotone decreasing. When $R$ varies from $0$ to $\sqrt{2(n-1)}$, the horizontal points of $g_R$ are monotone increasing as long as their $r$-coordinates are less than $\sqrt{2(n-1)}$. The following theorem gives a partial result concerning these monotonicity properties.

\begin{figure}[h]
	\centering
	\makebox[\textwidth][c]{\includegraphics[width=6in]{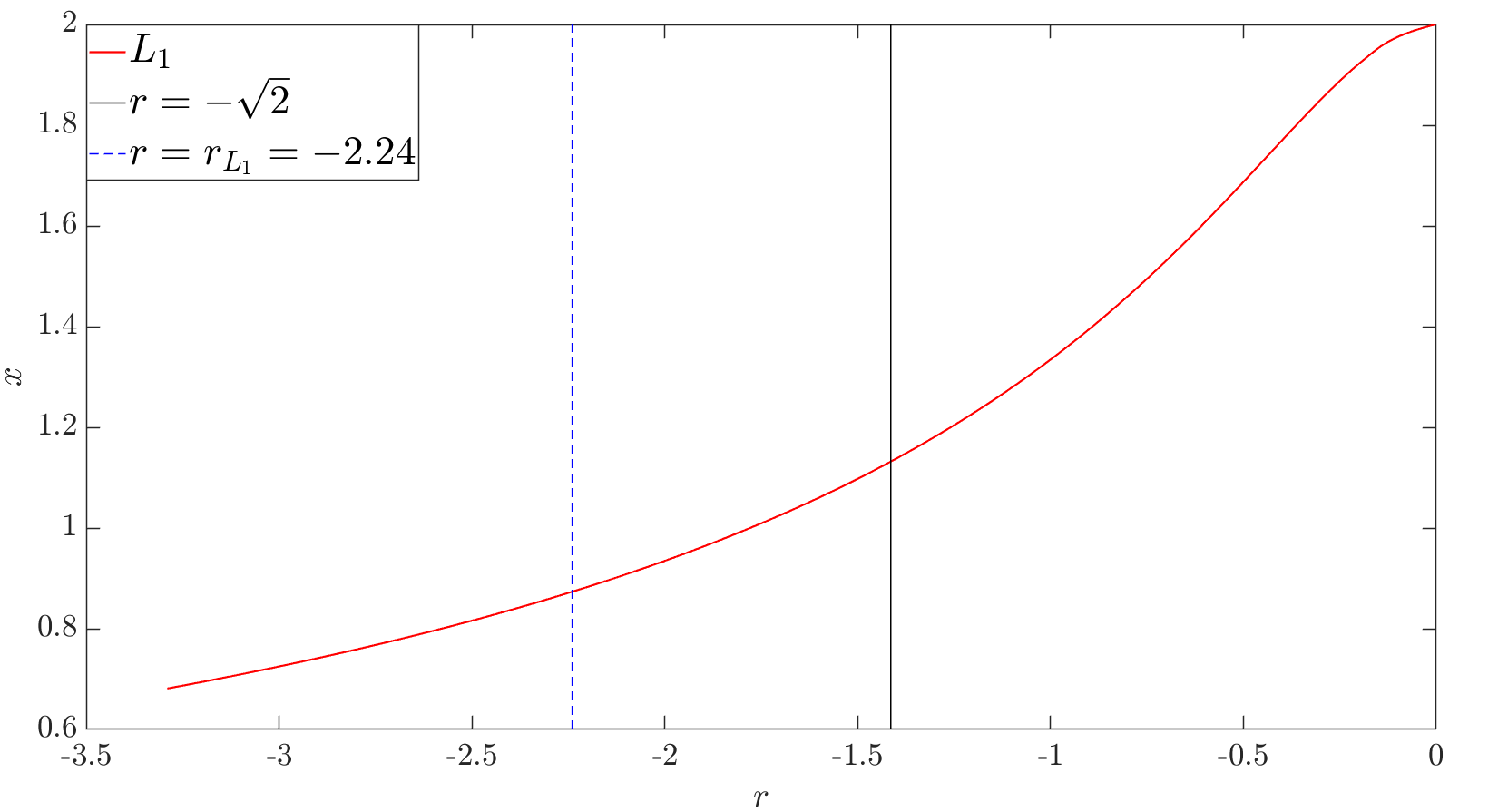}}
	\caption{}
	\label{fig:2_pic}
\end{figure}

\begin{figure}[h]
	\centering
	\makebox[\textwidth][c]{\includegraphics[width=6in]{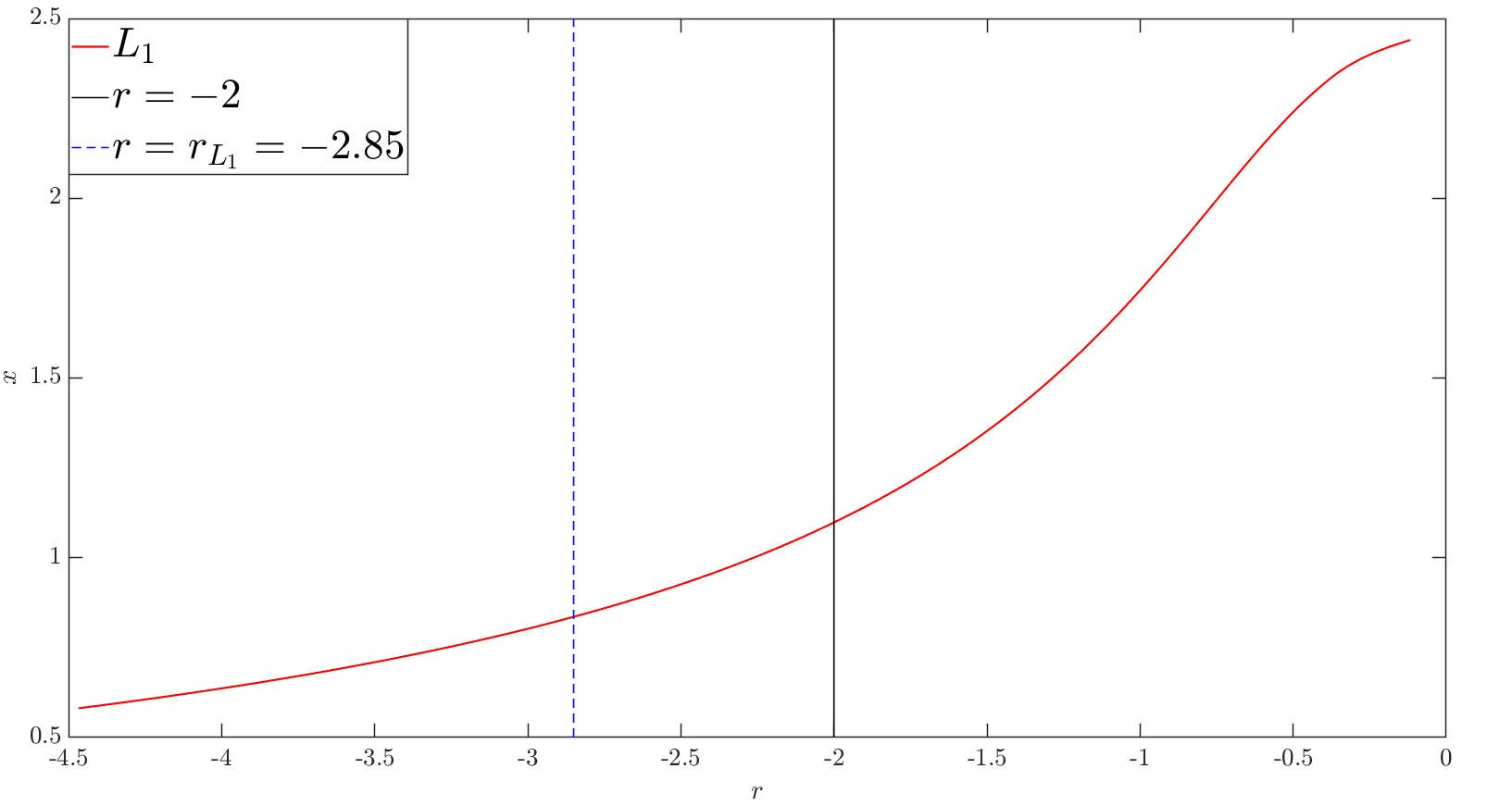}}
	\caption{}
	\label{fig:3_pic}
\end{figure}

\begin{figure}[h]
	\centering
	\makebox[\textwidth][c]{\includegraphics[width=6in]{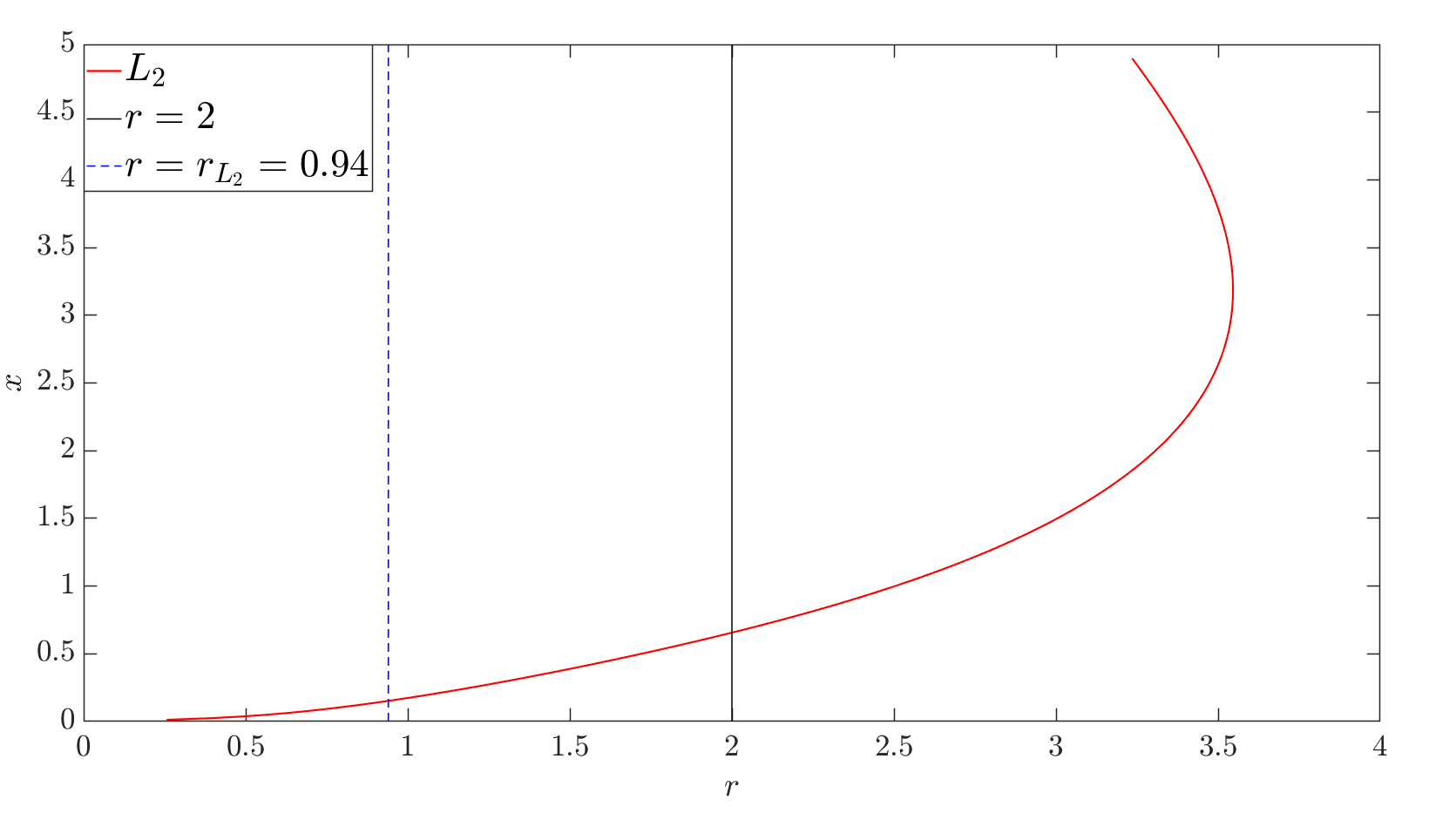}}
	\caption{}
	\label{fig:4_pic}
\end{figure}

The main idea of the proof may be summarized as a successive differentiation argument. Namely, we differentiate the inequality that we want to prove, and ensure that the original inequality can be recovered by integrating the differentiated inequality. After differentiating several times, we eventually obtain an inequality depending only on $r$, which determines the range of $r$ for which the curve of horizontal points remains monotone.

Figure~\ref{fig:2_pic} illustrates the curve $L_1$ and $r=r_{L_1}$  for $n=2$.
Figure~\ref{fig:3_pic} and Figure~\ref{fig:4_pic} illustrate the curve $L_1$, $L_2$, $r=r_{L_1}$ and $r=r_{L_2}$ for $n=3$.

\begin{theorem}\label{thm:monotonicity-range}
Suppose that $R\in(-\infty,-\sqrt{2n})$. Then the curve $L_1$ of horizontal points of the solutions $g_R$ is monotone increasing when $r<r_{L_1}$, 	where $r_{L_1}<-\sqrt{2(n-1)}$ is given by
\[
r_{L_1}=-\sqrt{2(n-1)+1+\sqrt{8(n-1)+1}}.
\]
Moreover, $L_1$ intersects the line $r=r_{L_1}$ at exactly one point. The same conclusion also holds when $C=+\infty$.

For $n\geq3$, when $R\in(0,\sqrt{2(n-1)})$, the curve $L_2$ of horizontal points of the solutions $g_R$ is monotone increasing when $0<r<r_{L_2}$. Here $0<r_{L_2}<\sqrt{2(n-1)}$ is defined by $r_{L_2}=\min\{r_1,r_2,r_3\}$, where
\[
r_1=\sqrt{2\left[n-\frac{1}{2}-\sqrt{2n-\frac{7}{4}}\right]},
\]
\begin{equation*}
	r_{2}=\sqrt{
		2\left[
		\frac{(n-1)(4n-5)}{2(2n-3)}
		-(n-1)\sqrt{\left(\frac{4n-5}{4n-6}\right)^2-1}
		\right]},
\end{equation*}
and $$r_3=\sqrt{2(n-1)\frac{n-2}{n}}.$$
Moreover, $L_2$ intersects the line $r=r_{L_2}$ at exactly one point. The same conclusion also holds when $C=+\infty$.
\end{theorem}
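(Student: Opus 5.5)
By Theorem~\ref{thm:horizontal-point-monotonicity}, it suffices to show that $\eta\ge0$ on the portion of $g_R$ where $g_R\ge0$ and $g_R'\ge0$, as long as this portion stays in the range $r<r_{L_1}$ (respectively $0<r<r_{L_2}$). The limiting case $C=+\infty$ then follows from Lemma~\ref{lem:eta-comparison}(ii),(iii). So the plan is to reduce $\eta\ge0$, by successive differentiation in $r$, to an inequality involving only $r$. Write $A(r)=\frac r2-\frac{n-1}{r}$ and $A_0=A(R)$.

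First I will rewrite $\eta$ using \eqref{eq:g-equation}, which gives $g''=(1+(g')^2)(Ag'-\tfrac g2)$. Then
\[
\eta=(1+(g')^2)\Big[\tfrac{n-1}{r^2}+\tfrac12+2(A-A_0)g'\big(Ag'-\tfrac g2\big)\Big].
\]
It is therefore enough to show that the bracket $E$ is nonnegative. At $r=R$ we have $A=A_0$, so $E(R)=\frac{n-1}{R^2}+\frac12>0$.

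\textbf{Case $L_1$} ($R<-\sqrt{2n}$). Here $A$ is increasing, so $A-A_0\ge0$ on the portion. The product $g'g''$ has sign equal to that of $g''$, which is negative by the concavity Lemma~\ref{lem:concavity-propagation}. So $\eta$ can only fail through the competition between $\frac{n-1}{r^2}+\frac12$ and $2(A-A_0)|g'g''|$. I will not bound $E$ directly. Instead I will show that at any point where $E$ would first hit $0$, its derivative has the right sign. I will compute $E'$ and substitute $g''$ again. At a point where $E=0$, the term $(A-A_0)g'(Ag'-g/2)$ can be eliminated, which leaves an expression in $r$, $g'$, $g$. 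Differentiating once more and eliminating again should leave a quadratic condition in $A$, or in $r^2$, of the form $r^4-(4n-2)r^2+c\ge0$. I expect that solving this condition gives exactly $r^2\ge 2(n-1)+1+\sqrt{8(n-1)+1}$, that is, $r\le r_{L_1}$. The integration step then recovers $E\ge0$ from the sign information on the derivatives together with the positive initial values at $R$, in the spirit of Lemma~\ref{lem:sturm-comparison}(i).

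\textbf{Case $L_2$} ($0<R<\sqrt{2(n-1)}$). Now $A<0$ and $A$ is increasing, and the sign of $A-A_0$ is again nonnegative. The analogous elimination produces several $r$-only inequalities. I expect $r_3$ to come from the sign of the coefficient $\frac{n-1}{r^2}-\frac{n}{2(n-1)}\cdot(\cdots)$, and $r_1,r_2$ from the roots of two quadratics in $r^2$. Taking $r_{L_2}=\min\{r_1,r_2,r_3\}$ makes all of them hold simultaneously; the condition $n\ge3$ is needed for $r_3>0$.

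\textbf{Intersection with the line $r=r_{L_\cdot}$.} By Theorem~\ref{thm:sphere-near-solutions}, $L_1$ is a continuous curve running from $(0,+\infty)$, in reflected coordinates, to $(\sqrt{2n},0)$, so it meets $r=r_{L_1}$. On the side where monotonicity holds, $L_1$ is the graph of a strictly monotone function. Before its first crossing of $r=r_{L_1}$ the curve is strictly monotone, and because horizontal points on the near side have $\bar R$ strictly increasing in $R$, it cannot return to that line, which gives uniqueness of the crossing. The same argument applies to $L_2$, using part (iii) of Theorem~\ref{thm:sphere-near-solutions}.

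\textbf{Main obstacle.} The hard step is the successive-differentiation bookkeeping. At each stage the leftover terms must have a definite sign, using only $g\ge0$, $g'\ge0$, $g''<0$ and $A-A_0\ge0$, so that integrating back is legitimate. If a mixed term resists, my first attempt will be to divide by $g'$ and work with $E/g'$ or $E/(g'g'')$, which is the trick already used in Lemma~\ref{lem:eta-comparison}(ii).
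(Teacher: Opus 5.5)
Your reduction to the bracket $E$ and your first-zero (barrier) framing are both sound. The problem is that the proposal stops where the content of the theorem begins. The $r$-only conditions are predicted to match the stated constants rather than derived, and the ingredient that produces them is missing.

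Carry out your elimination with $P=Ag'-\frac g2<0$, $A'=\frac{n-1}{r^2}+\frac12$ and $A''=-\frac{2(n-1)}{r^3}$. At a first zero of $E$ you get
\[
E'=A''+A'\Big[-2A-\tfrac12\,gg'+\tfrac{g}{2g'}-\tfrac{(n-1)g'}{r^2P}\Big].
\]
The left side of \eqref{eq:eta-reduced-inequality} is exactly $(1+(g')^2)^2g'P$ times this expression. The paper divides $\eta$ by $g'g''$ so that the $A_0$-term differentiates to $2A'$, which is what your elimination at $E=0$ achieves. So at this stage your route and the paper's coincide.

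The residual still depends on $g$ and $g'$. Apart from $A''$, which is harmful when $r>0$, its only term without a favourable sign is $-\frac12A'gg'$. You therefore need a bound of $gg'$ by a function of $r$ that holds on the whole portion $[R,\bar R]$, because you do not know where $E$ first vanishes. Differentiating again at the same touching point cannot supply this.

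The missing idea is the auxiliary inequality \eqref{eq:L1-key-inequality}, $A+\frac14gg'\le0$. Given it, the bracket is positive, and $A''>0$ for $r<0$, so $E'>0$, a contradiction. The paper proves this inequality by showing $\frac{d}{dr}\big[\frac{A}{g'}+\frac g4\big]\le0$. After substituting $g''$ and discarding the term $\frac g2A(1+(g')^2)\le0$, this reduces to $A'\le A^2$ and $A^2\ge\frac14$. The first condition is $r^4-(4n-2)r^2+4(n-1)(n-2)\ge0$, which gives $r_{L_1}$.

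Your predictions about the constants also do not follow from the method you describe.

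\textbf{Second barrier for $L_1$.} If you do the second stage by another barrier at a zero of $A+\frac14gg'$, substituting $g=-4A/g'$, you get the requirement $(A'-3A^2)+(\frac14-A^2)(g')^2-2A^2/(g')^2<0$. That gives $A'\le3A^2$ and $A^2\ge\frac14$, a different quartic from the one you expected. This still implies the statement, because it holds on a region containing $r<r_{L_1}$. But the exact value $r_{L_1}$ comes only from the paper's global-monotonicity step.

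\textbf{The constants for $L_2$.} In the paper these come from the key bound \eqref{eq:L2-key-inequality}, $\frac14gg'\le\frac{2n-3}{2(n-1)}(\frac{n-1}{r}-\frac r2)$, proved the same way. The coefficients of $g'$ and $(g')^3$ give $A^2\ge A'$ and $\frac{2n-3}{2(n-1)}A^2\ge\frac14$, whose small roots are $r_1$ and $r_2$. The constant $r_3$ is the range in which \eqref{eq:L2-key-inequality} implies \eqref{eq:L2-secondary-inequality}. It is an artifact of splitting \eqref{eq:eta-reduced-inequality} into two groups, and a barrier argument that does not split would never produce it. Your account of $r_3$ through an unspecified coefficient is fitted to the answer rather than derived.

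As written, the proposal establishes only $E(R)>0$ and the crossing-uniqueness argument, which matches the paper's. The core estimate is not there.
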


\begin{proof}
To prove $\eta\geq0$, it is enough to prove
\begin{equation}\label{eq:eta-bracket-nonpositive}
	\frac{1+(g')^{2}}{g'g''}
	\left(\frac{n-1}{r^{2}}+\frac{1}{2}\right)
	+
	2\left[
	\left(\frac{r}{2}-\frac{n-1}{r}\right)
	-
	\left(\frac{R}{2}-\frac{n-1}{R}\right)
	\right]
	\leq 0.
\end{equation}
Indeed, on the portion under consideration we have $g'>0$ and $g''<0$, and hence $g'g''<0$. Therefore, multiplying \eqref{eq:eta-bracket-nonpositive} by $g'g''$ gives $\eta\geq0$.

If we prove
\begin{equation}\label{eq:eta-bracket-derivative-nonpositive}
	\frac{d}{dr}
	\bigg\{
	\frac{1+(g')^{2}}{g'g''}
	\left(\frac{n-1}{r^{2}}+\frac{1}{2}\right)
	+
	2\left[
	\left(\frac{r}{2}-\frac{n-1}{r}\right)
	-
	\left(\frac{R}{2}-\frac{n-1}{R}\right)
	\right]
	\bigg\}
	\leq0,
\end{equation}
then integrating the above inequality over the interval $(R,r)$, and using $g''(R)<0$, we obtain
\begin{equation}\label{eq:eta-bracket-initial-bound}
	\begin{aligned}
		&
		\frac{1+(g')^{2}}{g'g''}
		\left(\frac{n-1}{r^{2}}+\frac{1}{2}\right)
		+
		2\left[
		\left(\frac{r}{2}-\frac{n-1}{r}\right)
		-
		\left(\frac{R}{2}-\frac{n-1}{R}\right)
		\right] \\
		&\leq
		\frac{1+(g')^{2}}{g'g''}(R)
		\left(\frac{n-1}{R^{2}}+\frac{1}{2}\right)<0
	\end{aligned}
\end{equation}
Thus \eqref{eq:eta-bracket-nonpositive} follows. Hence it remains to prove \eqref{eq:eta-bracket-derivative-nonpositive}. Namely, we need to show that
\begin{equation}\label{eq:eta-bracket-derivative-expanded}
	\begin{aligned}
		&\frac{d}{dr}
		\bigg\{
		\frac{1+(g')^{2}}{g'g''}
		\left(\frac{n-1}{r^{2}}+\frac{1}{2}\right)
		+
		2\left[
		\left(\frac{r}{2}-\frac{n-1}{r}\right)
		-
		\left(\frac{R}{2}-\frac{n-1}{R}\right)
		\right]
		\bigg\} \\
		&=
		\frac{1}{(g')^2(g'')^2}
		\bigg\{
		2(g')^2(g'')^2
		-
		(1+(g')^{2})\left[(g'')^2+g'g'''\right]
		\bigg\}
		\left(\frac{n-1}{r^2}+\frac{1}{2}\right)\\
		&\quad
		+
		\frac{1+(g')^{2}}{g'g''}
		\left[-\frac{2(n-1)}{r^3}\right]
		+
		2\left[\frac{1}{2}+\frac{n-1}{r^2}\right]\\
		&=
		4\left(\frac{n-1}{r^2}+\frac{1}{2}\right)
		-
		\left(\frac{n-1}{r^2}+\frac{1}{2}\right)
		\left[
		\frac{1}{(g')^2}
		+
		1
		+
		\frac{(1+(g')^{2})g'''}{g'(g'')^2}
		\right]\\
		&\quad
		-
		\frac{2(n-1)}{r^3}
		\frac{1+(g')^{2}}{g'g''}\\
		&=
		3\left(\frac{n-1}{r^2}+\frac{1}{2}\right)
		-
		\left(\frac{n-1}{r^2}+\frac{1}{2}\right)
		\frac{1}{(g')^2}
		-\left(\frac{n-1}{r^2}+\frac{1}{2}\right)
		\frac{1+(g')^{2}}{g'(g'')^2}g'''\\
		&\quad
		-
		\frac{2(n-1)}{r^3}
		\frac{1+(g')^{2}}{g'g''}
		\leq0.
	\end{aligned}
\end{equation}

For the next step, we multiply both sides of the above inequality by $(g')^2(g'')^2$, which is positive away from possible isolated zeroes of $g''$, and substitute the expression \eqref{eq:g-third-derivative} for $g'''$. After simplification, it is enough to prove
\begin{equation*}
	\begin{aligned}
		&
		3\left(\frac{n-1}{r^2}+\frac{1}{2}\right)(g')^2(g'')^2
		-
		\left(\frac{n-1}{r^2}+\frac{1}{2}\right)(g'')^2\\
		&\quad
		-
		\left(\frac{n-1}{r^2}+\frac{1}{2}\right)
		(1+(g')^{2})g'g'''
		-
		\frac{2(n-1)}{r^3}(1+(g')^{2})g'g''\\
		&=
		-
		\left(\frac{n-1}{r^2}+\frac{1}{2}\right)(g'')^2
		-
		\frac{n-1}{r^2}
		\left(\frac{n-1}{r^2}+\frac{1}{2}\right)
		(1+(g')^{2})^2(g')^{2}\\
		&\quad
		-
		\left(\frac{n-1}{r^2}+\frac{1}{2}\right)
		\left(\frac{r}{2}-\frac{n-1}{r}\right)
		(1+(g')^{2})g'g''\\
		&\quad
		-
		\frac{g}{2}
		\left(\frac{n-1}{r^2}+\frac{1}{2}\right)
		(1+(g')^{2})(g')^2g''
		-
		\frac{2(n-1)}{r^3}(1+(g')^{2})g'g''\leq0.
	\end{aligned}
\end{equation*}
Further simplification reduces the desired inequality to
\begin{equation}\label{eq:eta-reduced-inequality}
	\begin{aligned}
		&
		-2\left(\frac{n-1}{r^2}+\frac{1}{2}\right)
		(1+(g')^{2})^2
		\left(\frac{r}{2}-\frac{n-1}{r}\right)^2(g')^2
		-
		\frac{n-1}{r^2}
		\left(\frac{n-1}{r^2}+\frac{1}{2}\right)
		(1+(g')^{2})^2(g')^{2}\\
		&-
		\frac{1}{2}
		\left(\frac{n-1}{r^2}+\frac{1}{2}\right)
		\left(\frac{r}{2}-\frac{n-1}{r}\right)
		(1+(g')^{2})^2g(g')^3
		-
		\frac{2(n-1)}{r^3}
		\left(\frac{r}{2}-\frac{n-1}{r}\right)
		(1+(g')^{2})^2(g')^2\\
		&\quad
		+
		\frac{1}{4}g^2(g')^2
		\left(\frac{n-1}{r^2}+\frac{1}{2}\right)
		(1+(g')^{2})^2\\
		&\quad
		+
		\frac{3}{2}gg'
		\left(\frac{n-1}{r^2}+\frac{1}{2}\right)
		\left(\frac{r}{2}-\frac{n-1}{r}\right)
		(1+(g')^{2})^2+
		\frac{n-1}{r^3}(1+(g')^{2})^2gg'\\
		&\quad
		-
		\frac{g^2}{4}
		\left(\frac{n-1}{r^2}+\frac{1}{2}\right)
		(1+(g')^{2})^2
		\leq0.
	\end{aligned}
\end{equation}
The last term in the above inequality is non-positive. Therefore, in order to prove \eqref{eq:eta-reduced-inequality}, it suffices to show that the first two lines are non-positive and that the third and fourth lines are non-positive.

Dividing the first two lines by $(1+(g')^{2})^2(g')^2$, we are reduced to proving
\begin{equation}\label{eq:eta-reduced-first-part}
	\begin{aligned}
		&
		-2\left(\frac{n-1}{r^2}+\frac{1}{2}\right)
		\left(\frac{r}{2}-\frac{n-1}{r}\right)^2
		-
		\frac{n-1}{r^2}
		\left(\frac{n-1}{r^2}+\frac{1}{2}\right)\\
		&\quad
		-
		\frac{1}{2}
		\left(\frac{n-1}{r^2}+\frac{1}{2}\right)
		\left(\frac{r}{2}-\frac{n-1}{r}\right)gg'
		-
		\frac{2(n-1)}{r^3}
		\left(\frac{r}{2}-\frac{n-1}{r}\right)
		\leq0.
	\end{aligned}
\end{equation}
Similarly, dividing the third and fourth lines of \eqref{eq:eta-reduced-inequality} by $(1+(g')^{2})^2gg'$, we are reduced to proving
\begin{equation}\label{eq:eta-reduced-second-part}
	\begin{aligned}
		&
		\frac{1}{4}gg'
		\left(\frac{n-1}{r^2}+\frac{1}{2}\right)
		+
		\frac{3}{2}
		\left(\frac{n-1}{r^2}+\frac{1}{2}\right)
		\left(\frac{r}{2}-\frac{n-1}{r}\right)
		+
		\frac{n-1}{r^3}
		\leq0.
	\end{aligned}
\end{equation}

We now discuss the intervals on which \eqref{eq:eta-reduced-first-part} and \eqref{eq:eta-reduced-second-part} hold in two separate cases: $r<-\sqrt{2(n-1)}$ and $0<r<\sqrt{2(n-1)}$.

({\romannumeral1}): The case $r<-\sqrt{2(n-1)}$.

Since $r<-\sqrt{2(n-1)}<0$, we have
\[
-\frac{2(n-1)}{r^3}
\left(\frac r2-\frac{n-1}{r}\right)<0.
\]
Moreover,
\[
-\frac{n-1}{r^2}
\left(\frac{n-1}{r^2}+\frac12\right)<0.
\]
Therefore, in order to prove \eqref{eq:eta-reduced-first-part}, it suffices to prove
\begin{equation}\label{eq:L1-key-inequality}
	\left(\frac r2-\frac{n-1}{r}\right)+\frac14 gg'\leq0.
\end{equation}
Also, since $\frac{n-1}{r^3}<0$, in order to prove \eqref{eq:eta-reduced-second-part}, it suffices to prove
\begin{equation*}
	\frac32\left(\frac r2-\frac{n-1}{r}\right)+\frac14 gg'\leq0.
\end{equation*}
Since
\[
\frac32\left(\frac r2-\frac{n-1}{r}\right)
<
\left(\frac r2-\frac{n-1}{r}\right),
\]
we see that \eqref{eq:L1-key-inequality} implies both \eqref{eq:eta-reduced-first-part} and \eqref{eq:eta-reduced-second-part}.

We now prove \eqref{eq:L1-key-inequality}. If
\begin{equation}\label{eq:L1-auxiliary-derivative}
	\frac{d}{dr}
	\left[
	\left(\frac r2-\frac{n-1}{r}\right)\frac{1}{g'}
	+\frac14 g
	\right]\leq0,
\end{equation}
then integrating \eqref{eq:L1-auxiliary-derivative} over the interval $(R,r)$ gives
\begin{equation*}
	\left(\frac r2-\frac{n-1}{r}\right)\frac{1}{g'}
	+\frac14 g
	\leq
	\left(\frac R2-\frac{n-1}{R}\right)\frac{1}{g'(R)}<0,
\end{equation*}
where we used $g'(R)=C>0$ and $\frac R2-\frac{n-1}{R}<0$. Multiplying the above inequality by $g'>0$, we obtain \eqref{eq:L1-key-inequality}.

Thus it remains to prove \eqref{eq:L1-auxiliary-derivative}. We compute
\begin{equation*}
	\begin{aligned}
		&\frac{d}{dr}
		\left[
		\left(\frac r2-\frac{n-1}{r}\right)\frac{1}{g'}
		+\frac14 g
		\right] \\
		&=
		\left(\frac{n-1}{r^2}+\frac12\right)\frac{1}{g'}
		-
		\left(\frac r2-\frac{n-1}{r}\right)
		\frac{g''}{(g')^2}
		+\frac14 g' .
	\end{aligned}
\end{equation*}
Therefore, after multiplying by $(g')^2>0$, it is enough to show
\begin{equation*}
	\begin{aligned}
		&
		\left(\frac{n-1}{r^2}+\frac12\right)g'
		-
		\left(\frac r2-\frac{n-1}{r}\right)g''
		+\frac14(g')^3 \\
		&=
		\left(\frac{n-1}{r^2}+\frac12\right)g'
		-
		\left(\frac r2-\frac{n-1}{r}\right)^2g'
		-
		\left(\frac r2-\frac{n-1}{r}\right)^2(g')^3 \\
		&\quad
		+\frac14(g')^3
		+
		\frac{g}{2}
		\left(\frac r2-\frac{n-1}{r}\right)(1+(g')^2)
		\leq0.
	\end{aligned}
\end{equation*}
Here we have used equation \eqref{eq:g-equation} to substitute for $g''$.

Since $g\geq0$ and $\frac r2-\frac{n-1}{r}<0$, the last term is non-positive. Hence it is enough to prove
\begin{equation}\label{eq:L1-condition-one}
	\begin{aligned}
		&
		\left(\frac{n-1}{r^2}+\frac12\right)g'
		-
		\left(\frac r2-\frac{n-1}{r}\right)^2g' \\
		&=
		\left[
		\left(\frac{n-1}{r^2}+\frac12\right)
		-
		\left(\frac r2-\frac{n-1}{r}\right)^2
		\right]g'
		\leq0,
	\end{aligned}
\end{equation}
and
\begin{equation}\label{eq:L1-condition-two}
	\begin{aligned}
		&
		-\left(\frac r2-\frac{n-1}{r}\right)^2(g')^3
		+\frac14(g')^3 \\
		&=
		\left[
		-\left(\frac r2-\frac{n-1}{r}\right)^2
		+\frac14
		\right](g')^3
		\leq0.
	\end{aligned}
\end{equation}

Since $g'>0$, conditions \eqref{eq:L1-condition-one} and \eqref{eq:L1-condition-two} are guaranteed by
\begin{equation*}
	\begin{cases}
		\displaystyle
		\left(\frac{n-1}{r^2}+\frac12\right)
		-
		\left(\frac r2-\frac{n-1}{r}\right)^2
		\leq0,\\[1.2em]
		\displaystyle
		-\left(\frac r2-\frac{n-1}{r}\right)^2+\frac14\leq0.
	\end{cases}
\end{equation*}
Solving this system gives
\[
r\leq
-\sqrt{2(n-1)+1+\sqrt{8(n-1)+1}}.
\]
We define
\[
r_{L_1}
=
-\sqrt{2(n-1)+1+\sqrt{8(n-1)+1}}.
\]
Thus $\eta\geq0$ holds in the region $r<r_{L_1}$. By Theorem~\ref{thm:horizontal-point-monotonicity}, the curve $L_1$ of horizontal points of $g_R$ is monotone increasing in this region. Consequently, once $L_1$ crosses the line $r=r_{L_1}$, it cannot return to the region $r<r_{L_1}$. Hence $L_1$ intersects the line $r=r_{L_1}$ at most once; together with the continuity of $L_1$, this gives the desired uniqueness of the intersection.

The case $C=+\infty$ follows by taking the limit $C\to+\infty$.

({\romannumeral2}): The case $0<r<\sqrt{2(n-1)}$.

When $r>0$, the signs of the non-negative terms in \eqref{eq:eta-reduced-first-part} and \eqref{eq:eta-reduced-second-part} are more complicated. We first make some preparations.

It is easy to see that
\[
\frac{2(n-1)}{r^3}
=
\left(\frac{n-1}{r^2}+\frac{1}{2}\right)\frac{2}{r}
-\frac{1}{r},
\]
and
\[
\frac{n-1}{r^2}
=
-\frac{1}{r}
\left(\frac{r}{2}-\frac{n-1}{r}\right)
+\frac{1}{2}.
\]
Substituting these two identities into the left-hand side of \eqref{eq:eta-reduced-first-part}, and then dividing by
\[
\left(\frac{r}{2}-\frac{n-1}{r}\right)
\left(\frac{n-1}{r^2}+\frac{1}{2}\right),
\]
which is negative for $0<r<\sqrt{2(n-1)}$, we see that \eqref{eq:eta-reduced-first-part} is equivalent to the non-negativity of the resulting quotient. This quotient is
\begin{equation*}
	\begin{aligned}
		&-2\left(\frac{r}{2}-\frac{n-1}{r}\right)
		-\frac{1}{r}
		-\frac{1}{2}\frac{1}{\frac{r}{2}-\frac{n-1}{r}}
		+\frac{1}{r}
		\frac{1}{\frac{n-1}{r^2}+\frac{1}{2}}
		-\frac{1}{2}gg'\\
					&=
		-2\left(\frac{r}{2}-\frac{n-1}{r}\right)
		-\frac{1}{r}
		+\frac{1}{\frac{2(n-1)}{r}-r}
		+\frac{1}{r}
		\frac{1}{\frac{n-1}{r^2}+\frac{1}{2}}
		-\frac{1}{2}gg'\\
		&\geq
		-2\left(\frac{r}{2}-\frac{n-1}{r}\right)
		-\frac{1}{r}
		+\frac{r}{2(n-1)}
		+\frac{1}{r}
		\frac{1}{\frac{n-1}{r^2}+\frac{1}{2}}
		-\frac{1}{2}gg'\\
		&=
		\frac{2n-3}{n-1}
		\left(\frac{n-1}{r}-\frac{r}{2}\right)
		+\frac{2r}{2(n-1)+r^2}
		-\frac{1}{2}gg'.
		\end{aligned}
	\end{equation*}
Since $\frac{2r}{2(n-1)+r^2}>0$, it is enough, in order to prove \eqref{eq:eta-reduced-first-part}, to prove
\begin{equation}\label{eq:L2-key-inequality}
\frac{2n-3}{2(n-1)}
\left(\frac{n-1}{r}-\frac{r}{2}\right)
-\frac{1}{4}gg'\geq0.
\end{equation}

Similarly, substituting
\[
\frac{2(n-1)}{r^3}
=
\left(\frac{n-1}{r^2}+\frac{1}{2}\right)\frac{2}{r}
-\frac{1}{r}
\]
into the left-hand side of \eqref{eq:eta-reduced-second-part}, and dividing by $\frac{n-1}{r^2}+\frac{1}{2}$, we obtain
\begin{equation*}
\begin{aligned}
&\frac{1}{4}gg'
+\frac{3}{2}\left(\frac{r}{2}-\frac{n-1}{r}\right)
+\frac{1}{r}
-\frac{1}{2r}
\frac{1}{\frac{n-1}{r^2}+\frac{1}{2}}\\
&=
\frac{1}{4}gg'
+\frac{3}{2}\left(\frac{r}{2}-\frac{n-1}{r}\right)
+\frac{1}{r}
-\frac{r}{2(n-1)+r^2}\\
&=
\frac{1}{4}gg'
+\frac{3}{4}r
-\frac{3n-5}{2r}
-\frac{r}{2(n-1)+r^2}.
\end{aligned}
\end{equation*}
Therefore, to prove \eqref{eq:eta-reduced-second-part}, it suffices to prove
\begin{equation}\label{eq:L2-secondary-inequality}
\frac{3n-5}{2r}
-\frac{3}{4}r
-\frac{1}{4}gg'\geq0.
\end{equation}
A direct computation shows that, when $r\in\left(0,\sqrt{2(n-1)\frac{n-2}{n}}\right]$, we have
\[
\frac{2n-3}{2(n-1)}
\left(\frac{n-1}{r}-\frac{r}{2}\right)
<
\frac{3n-5}{2r}
-\frac{3}{4}r.
\]
Hence, for $n\geq3$, \eqref{eq:L2-key-inequality} implies \eqref{eq:L2-secondary-inequality} in this range.

It remains to determine the range of $r$ for which \eqref{eq:L2-key-inequality} holds. We use the same idea as in \eqref{eq:L1-auxiliary-derivative}. Namely, it is enough to prove
\begin{equation*}
\begin{aligned}
&\frac{d}{dr}
\left\{
\frac{2n-3}{2(n-1)}
\left(\frac{n-1}{r}-\frac{r}{2}\right)
\frac{1}{g'}
-\frac{1}{4}g
\right\}\\
&=
\frac{2n-3}{2(n-1)}
\left(-\frac{n-1}{r^2}-\frac{1}{2}\right)
\frac{1}{g'}
-
\frac{2n-3}{2(n-1)}
\left(\frac{n-1}{r}-\frac{r}{2}\right)
\frac{g''}{(g')^2}
-\frac{1}{4}g'\\
&\geq0.
\end{aligned}
\end{equation*}
Indeed, after integrating from the initial point $R$ to $r$, and using $g(R)=0$ and $g'(R)=C>0$, we obtain \eqref{eq:L2-key-inequality}.

Multiplying the left-hand side of the above inequality by $(g')^2>0$, we get
\begin{equation*}
\begin{aligned}
&
\frac{2n-3}{2(n-1)}
\left(-\frac{n-1}{r^2}-\frac{1}{2}\right)g'
-
\frac{2n-3}{2(n-1)}
\left(\frac{n-1}{r}-\frac{r}{2}\right)g''
-\frac{1}{4}(g')^3\\
&=
\frac{2n-3}{2(n-1)}
\left(-\frac{n-1}{r^2}-\frac{1}{2}\right)g'\\
&\quad
-
\frac{2n-3}{2(n-1)}
\left(\frac{n-1}{r}-\frac{r}{2}\right)
\left(\frac{r}{2}-\frac{n-1}{r}\right)g'\\
&\quad
-
\frac{2n-3}{2(n-1)}
\left(\frac{n-1}{r}-\frac{r}{2}\right)
\left(\frac{r}{2}-\frac{n-1}{r}\right)(g')^3\\
&\quad
+
\frac{2n-3}{2(n-1)}
\left(\frac{n-1}{r}-\frac{r}{2}\right)
(1+(g')^2)\frac{g}{2}
-\frac{1}{4}(g')^3.
\end{aligned}
\end{equation*}
The term containing $g$ is non-negative. Therefore, it suffices to make the coefficients of $g'$ and $(g')^3$ non-negative. For the coefficient of $g'$, we need
\begin{equation}\label{eq:L2-condition-one}
\begin{aligned}
&
\frac{2n-3}{2(n-1)}
\left(-\frac{n-1}{r^2}-\frac{1}{2}\right)\\
&\quad
-
\frac{2n-3}{2(n-1)}
\left(\frac{n-1}{r}-\frac{r}{2}\right)
\left(\frac{r}{2}-\frac{n-1}{r}\right)
\geq0.
\end{aligned}
\end{equation}
For the coefficient of $(g')^3$, we need
\begin{equation}\label{eq:L2-condition-two}
\begin{aligned}
&
-\frac{2n-3}{2(n-1)}
\left(\frac{n-1}{r}-\frac{r}{2}\right)
\left(\frac{r}{2}-\frac{n-1}{r}\right)
-\frac{1}{4}
\geq0.
\end{aligned}
\end{equation}
Solving \eqref{eq:L2-condition-one} gives
\[
r\leq r_1
\overset{\mathrm{def}}{=}
\sqrt{
2\left[
n-\frac{1}{2}
-\sqrt{2n-\frac{7}{4}}
\right]
}.
\]
Solving \eqref{eq:L2-condition-two} gives
\[
r\leq r_2
\overset{\mathrm{def}}{=}
\sqrt{
2\left[
\frac{(n-1)(4n-5)}{2(2n-3)}
-
(n-1)
\sqrt{
\left(\frac{4n-5}{4n-6}\right)^2-1
}
\right]
}.
\]
Together with the additional restriction $0<r\leq r_3\overset{\mathrm{def}}{=}\sqrt{2(n-1)\frac{n-2}{n}}$, we finally obtain
\begin{equation*}
r\leq r_{L_2}
\overset{\mathrm{def}}{=}
\min\{r_1,r_2,r_3\}.
\end{equation*}
Thus $\eta\geq0$ holds in the region $0<r\leq r_{L_2}$. By Theorem~\ref{thm:horizontal-point-monotonicity}, the curve $L_2$ is monotone increasing in this region.
\end{proof}

\bibliographystyle{plain}
\bibliography{reference}

\end{document}